\definecolor{Gray}{gray}{0.9}
\definecolor{lightgray}{rgb}{0.9,0.9,0.9}
\definecolor{darkblue}{rgb}{0.05,0.25,0.65}
\definecolor{greenii}{RGB}{20,140,10}
\definecolor{orangeii}{RGB}{200,100,5}
\newcolumntype{L}[1]{>{\raggedright\let\newline\\\arraybackslash\hspace{0pt}}m{#1}}
\newcolumntype{C}[1]{>{\centering\let\newline\\\arraybackslash\hspace{0pt}}m{#1}}
\newcolumntype{R}[1]{>{\raggedleft\let\newline\\\arraybackslash\hspace{0pt}}m{#1}}
\newcommand{\raisemath}[1]{\mathpalette{\raisem@th{#1}}}
\newcommand{\raisem@th}[3]{\raisebox{#1}{$#2#3$}}
\newcommand{\dslash}{/\!\!/}
\newif\if@sup
\newtoks\@sups
\def\append@sup#1{\edef\act{\noexpand\@sups={\the\@sups #1}}\act}%
\def\reset@sup{\@supfalse\@sups={}}%
\def\mk@scripts#1#2{\if #2/ \if@sup ^{\the\@sups}\fi \else%
  \ifx #1_ \if@sup ^{\the\@sups}\reset@sup \fi {}_{#2}%
  \else \append@sup#2 \@suptrue \fi%
  \expandafter\mk@scripts\fi}
\def\tensor#1#2{\reset@sup#1\mk@scripts#2_/}
\def\multiscripts#1#2#3{\reset@sup{}\mk@scripts#1_/#2%
  \reset@sup\mk@scripts#3_/}
\newbox\slashbox \setbox\slashbox=\hbox{$/$}
\def\itex@pslash#1{\setbox\@tempboxa=\hbox{$#1$}
  \@tempdima=0.5\wd\slashbox \advance\@tempdima 0.5\wd\@tempboxa
  \copy\slashbox \kern-\@tempdima \box\@tempboxa}
\def\slash{\protect\itex@pslash}
\def\clap#1{\hbox to 0pt{\hss#1\hss}}
\def\mathclap{\mathpalette\mathclapinternal}
\def\mathclapinternal#1#2{\clap{$\mathsurround=0pt#1{#2}$}}
\let\oldroot\root
\def\root#1#2{\oldroot #1 \of{#2}}
\renewcommand{\sqrt}[2][]{\oldroot #1 \of{#2}}
\DeclareSymbolFont{symbolsC}{U}{txsyc}{m}{n}
\DeclareSymbolFont{stmry}{U}{stmry}{m}{n}
\DeclareFontFamily{OMX}{MnSymbolE}{}
\DeclareSymbolFont{mnomx}{OMX}{MnSymbolE}{m}{n}
\DeclareFontShape{OMX}{MnSymbolE}{m}{n}{
    <-6>  MnSymbolE5
   <6-7>  MnSymbolE6
   <7-8>  MnSymbolE7
   <8-9>  MnSymbolE8
   <9-10> MnSymbolE9
  <10-12> MnSymbolE10
  <12->   MnSymbolE12}{}
\def\Decl@Mn@Delim#1#2#3#4{%
  \if\relax\noexpand#1%
    \let#1\undefined
  \fi
  \DeclareMathDelimiter{#1}{#2}{#3}{#4}{#3}{#4}}
\def\Decl@Mn@Open#1#2#3{\Decl@Mn@Delim{#1}{\mathopen}{#2}{#3}}
\def\Decl@Mn@Close#1#2#3{\Decl@Mn@Delim{#1}{\mathclose}{#2}{#3}}
\Decl@Mn@Open{\llangle}{mnomx}{'164}
\Decl@Mn@Close{\rrangle}{mnomx}{'171}
\Decl@Mn@Open{\lmoustache}{mnomx}{'245}
\Decl@Mn@Close{\rmoustache}{mnomx}{'244}
\newcommand{\mathfr}{\mathfrak}
\def\co{\colon\thinspace}
\def\coeq{\colonequals}
\newcommand\noloc{%
  \nobreak
  \mspace{6mu plus 1mu}
  {:}
  \nonscript\mkern-\thinmuskip
  \mathpunct{}
  \mspace{2mu}
}
\newcommand{\g}{\mathfrak{g}}
\newcommand{\h}{\mathfrak{h}}
\newcommand{\R}{\ensuremath{\mathbb R}}
\newcommand{\Q}{\ensuremath{\mathbb Q}}
\renewcommand{\(}{\begin{equation}}
\renewcommand{\)}{\end{equation}}
\newcommand{\bea}{\begin{eqnarray*}}
\newcommand{\eea}{\end{eqnarray*}}
\newcommand{\mc}[1]{\mathcal{#1}}
\newcommand{\abs}[1]{\lvert #1 \rvert}
\def\CC{\mathbb{C}}
\def\GG{\mathbb{G}}
\def\QQ{\mathbb{Q}}
\def\RR{\mathbb{R}}
\def\ZZ{\mathbb{Z}}
\DeclareMathOperator{\Aut}{Aut}
\DeclareMathOperator{\Der}{Der}
\DeclareMathOperator{\Hom}{Hom}
\DeclareMathOperator{\iHom}{hom}
\DeclareMathOperator{\id}{id}
\DeclareMathOperator{\Map}{Map}
\DeclareMathOperator{\rank}{rank}
\DeclareMathOperator{\DGCA}{DGCA}
\DeclareMathOperator{\sfree}{sf}
\DeclareMathOperator{\sgn}{sgn}
\DeclareMathOperator{\Trd}{Trd}
\DeclareMathOperator{\Tot}{Tot}
\newtheorem{theorem}{Theorem}[section]
\newtheorem{prop}[theorem]{Proposition}
\newtheorem{cor}[theorem]{Corollary}
\theoremstyle{definition}
\newtheorem{construction}[theorem]{Construction}
\newtheorem{defn}[theorem]{Definition}
\newtheorem{example}[theorem]{Example}
\newtheorem{remark}[theorem]{Remark}
\newtheorem{note[theorem]}{Note}
\newcommand{\proofstep}[1]{
  \mbox{\small #1}
}
\newcommand{\eps}{\varepsilon}
\newcommand{\p}{\mathfrak{p}}
\begin{document}

\title{Mysterious Triality and the Exceptional Symmetry of Loop Spaces}
\author{Hisham Sati, \; Alexander A. Voronov}
\date{}                                           

\maketitle

\begin{abstract}
In previous work \cite{SV1}\cite{SV2}, we introduced Mysterious Triality, extending 
the Mysterious Duality \cite{INV} between physics and algebraic geometry to include algebraic topology
in the form of rational homotopy theory. 
Starting with the rational Sullivan minimal model of the 4-sphere $S^4$, capturing the dynamics 
of M-theory  via  Hypothesis H, this progresses to the dimensional reduction of M-theory on torus 
$T^k$, $k \ge 1$, with its dynamics described 
via the iterated cyclic loop space  $\mathcal{L}_c^k S^4$ of the 4-sphere. 
From this, we also extracted data corresponding to the maximal torus/Cartan subalgebra and the Weyl group of the exceptional Lie group/algebra of type $E_k$. 

\smallskip
In this paper, we discover much richer symmetry by extending the data of 
the Cartan subalgebra to a maximal parabolic subalgebra $\mathfrak{p}_k^{k(k)}$ of the split real form $\mathfrak{e}_{k(k)}$ of the exceptional Lie algebra of type $E_k$  by exhibiting an action, in rational homotopy category, of $\mathfrak{p}_k^{k(k)}$ on
the slightly more symmetric than $\mc{L}_c^k S^4$ toroidification $\mc{T}^k S^4$.
This action universally represents symmetries of the equations of motion of supergravity in the reduction of M-theory to $11-k$ dimensions.

\smallskip 
Along the way, we identify the minimal model of the toroidification $\mathcal{T}^k S^4$,
generalizing the results of 
Vigu\'{e}-Poirrier, Sullivan, and Burghelea, and establish an algebraic toroidification/totalization adjunction.

 \end{abstract}

\medskip

\tableofcontents

\section{Introduction} 

 The occurrence of the global exceptional $E_k$ symmetries in toroidally-compactified to $D = {11-k}$ dimensions
 eleven-dimensional supergravity was conjectured by Cremmer and Julia in \cite{CJ2}\cite{CJ}\cite{CJ1}, based
on the structure of scalar fields in lower dimensions. 
This has been demonstrated for 
$D=9$ \cite{BHO95}, $D=4$ \cite{CJ}, and $D=3$ \cite{Jul1}\cite{Jul2}\cite{Miz},
and in general for $D \geq 3$ in \cite{CJLP1}, who prove that the scalar
Lagrangians have global $E_{k}$ symmetries, after rescaling and after dualizing all the
$(D-2)$-form potentials
to give rise to the maximal number of scalar fields.  
See e.g.\ \cite{Sam} for a recent survey.

\medskip 
The resulting moduli space of the corresponding putative quantum/high energy theory, M-theory, compactified on 
(reduced on, or simply on) $T^k$ is 
 usually taken to be the double quotient $G(\ZZ) \backslash G / K$, where $G$ is the U-duality group, 
 which is (the split real form of) the Lie group $E_k$, $G(\ZZ)$ is its integral form, and $K$ is 
 the maximal compact subgroup of $G$ \cite{HT}\cite{OP}. 
 In  the context of mysterious duality \cite{INV}, for ``flat tori with no C-field,''  the moduli space was taken to be of the form
$
\mathcal{M}_k := A/W,
$
where $A$ is the maximal split torus of $G$ and $W$ is its Weyl group.

\medskip 
This simplification of the moduli space, as we explained in \cite{SV1}\cite{SV2}, can be thought of as abelianization. 
In this paper, we provide the other direction, which
can be viewed as lifting to the full exceptional group via the Iwasawa decomposition $G=KAN$
(and also through a maximal parabolic):
\vspace{-1mm}
\(
\begin{tikzcd}[row sep=-2pt]
\label{nonabelianization} 
 \fbox{$\mathcal{M}_k = A/W$}
\arrow[rrr, rightsquigarrow, "{\bf \color{darkblue} ``unabelianize"}"]
&&& \fcolorbox{black}{lightgray}{$K \backslash G / G(\ZZ) = (AN) /G(\ZZ)$}
\\
\text{Abelian/Cartan} &&& \text{Nonabelian/full Lie group}
\end{tikzcd} 
\)
Item {\bf (vii)} in the outlook list in \cite{SV2} is \emph{Probing into the nonabelian part of the moduli space}:
We are, in a sense, supplying the topological content of the 
nonabelian unipotent part $N$ of \eqref{nonabelianization} by considering fields which carry 
nontrivial weights of the toroidal part $A$. This is the task that we take up in this paper. 
Here we will work locally, at the level of Lie algebras.
We will consider the global 
perspective at the level of Lie groups in \cite{SV4}. 

\medskip 
The starting point is Hypothesis H that the Sullivan minimal model of the 4-sphere $S^4$ in rational homotopy theory
captures the dynamics of the fields in M-theory, as originally proposed in \cite{Sati13}, and developed further
in \cite{FSS17}\cite{FSS19b} \cite{FSS-WZW}\cite{GS21}\cite{SV1}\cite{SV2}.

\paragraph{The rational models.} 
We are interested in generalizing constructions of loop spaces. We considered 
two directions in \cite{SV1}\cite{SV2}: the first is iterated cyclic loop spaces, which we highlighted 
extensively there, and the second is toroidification spaces, i.e., 
mapping directly from the tori, which we will emphasize here. 
Let $\mc{L} Z \coeq \Map^0 (S^1, Z)$ be the \emph{free loop space} of a
topological space $Z$, which we will assume to be path-connected. Here $\Map^0$ stands for the path component of the constant map.
The free loop space $\mc{L} Z$ admits a natural action of the group $S^1$ by rotating loops, and we define the \emph{cyclic loop space} or \emph{cyclification} of $Z$ as the \emph{homotopy quotient}
\vspace{-3mm} 
\[
\mathcal{L}_c Z \coeq \mc{L} Z \dslash S^1
\]

\vspace{-2mm} 
\noindent and the \emph{iterated cyclic loop space} recursively as
\[
\mathcal{L}_c^k Z \coeq \mc{L}_c( \mc{L}_c^{k-1} Z ).
\]
As in the case $k=1$, the \emph{iterated 
free loop space} $\mc{L}^k Z = \Map^0 (T^k, Z)$ admits a natural (right) action of the group $T^k$ by
rotating loops
 and we define the
 \emph{toroidification} to be the homotopy quotient
 \vspace{-2mm} 
\begin{equation}
\label{hquotient}
 \mc{T}^k Z := \mc{L}^k Z \dslash T^k\;.
\end{equation}

\vspace{-2mm} 
\noindent The homotopy quotient may be defined using the Borel construction
$\mc{L}^k Z \times_{T^k} ET^k$
as the quotient of $\mc{L}^k Z \times ET^k$ by the (anti)diagonal action of $T^k$, where $ET^k$ is the universal $T^k$-bundle. Note the matching $\mc{T}^1 Z = \mc{L}_c Z$ for $k=1$.
 As in \cite{SV1}\cite{SV2}, we use \emph{real}
  Sullivan minimal models, in the context of rational
  homotopy theory over the reals (see \cite{BSzcz}\cite{GM13}\linebreak[0]\cite{FSS-Chern}).

\paragraph{Relation to cyclic homology.}
\!\!\! For a path-connected space $X$, the homology of the Borel construction
$\mc{T}^1 X = \Map (S^1, X) \linebreak[0] \times_{S^1} ES^1 $, with coefficients in a commutative unital ring $R$, can be identified with the cyclic
homology of the singular chains $C_*(\Omega X; R)$ on the based (Moore) loop space
\cite{Good}\cite{BF86}. 
If $X$ is simply connected and of \emph{finite type}, i.e.,  $\dim \pi_i(X)\otimes \mathbb{Q}$ is finite for each $i$,
the Sullivan minimal model for the above Borel construction 
computes the cyclic homology of the
singular cochain algebra $C^*(X; \Q)$,
allowing for an explicit calculation of the cyclic homology of
the space $X$ \cite{vigue-burghelea}. 
For a simply connected space $X$, the cohomology
of $\mc{T}^1 X$ 
with coefficients in $R$ is dual to the
cyclic cohomology of $X$
as a module over $H^*(BS^1; R)=R[u]$, with $|u|=2$ 
\cite{Jones}. 
When $R=\mathbb{Q}$, the cyclic homology of $C^*(X; \mathbb{Q})$
is isomorphic to that of the differential graded (dg-) commutative algebra (DGCA) $A_{\rm PL}(X)$ of rational polynomial forms on $X$.
An iterated cyclic homology of connected rational DGCAs as a generalization of the 
ordinary cyclic homology is introduced in \cite{KY07} and shown to be 
isomorphic to the rational cohomology algebra of the Borel space
$\Map(T^k, X) \times_{S^1} ES^1$ for a $k$-connected $X$.  
While this seems close to our perspective
here and in \cite{SV1}\cite{SV2}, there are two main differences: in \cite{KY07},
$X$ is taken to be $k$-connected, and the Borel space involves a mix of the circle
and torus, with the circle action on the mapping space
induced by the diagonal action on the source $T^k$.

\paragraph{Nilpotency.} 
From \cite{HMRS78}, each component of the iterated free loop space 
$\Map(T^k, S^4)$ is nilpotent since the codomain space is. 
Furthermore, for the path component $\Map^f(T^k, S^4)$ containing 
a map $f: T^k \to S^4$, the rationalization is given by 
$\Map^f(T^k, S^4)_\QQ= \Map^{rf}(T^k, S^4_\QQ)$, where 
$r:S^4 \to S^4_\QQ$ is the rationalization map. The rational models can 
depend on the component and are classified essentially by the cohomology of the 
domain, in our case $T^k$ (see \cite{MR85}\cite{BS}).\footnote{Already for self-maps $f$ of the 4-sphere,
one has via \cite{MR85} different identificaion of the components of the mapping spaces: ${\Map}^{f=0}\big(S^4_\QQ, S^4_\QQ\big)\simeq S^4_\QQ \times S^3_\QQ$ while
 ${\Map}^{f \neq 0}\big(S^4_\QQ, S^4_\QQ\big)\simeq S^7_\QQ$. } 
We are interested in the components that contain the constant map, 
as we were in \cite{SV1}\cite{SV2}.
Modding out by the action of the torus, we need to check the nilpotence of the toroidification,
as appropriate for rational homotopy theory \cite{Bousfield-Gugenheim}\cite{Hilton82}. We do this in \Cref{Prop-nil}.


 \vspace{-3mm} 
\paragraph{Automorphisms.} 

The Lie groups and Lie algebras that we extracted in \cite{SV1}\cite{SV2} from the rational homotopy models 
$M(\mathcal{L}_c^k S^4)$ of the cyclifications were in the form 
of automorphisms and derivations. As indicated above,
we will be replacing the cyclic loop spaces with the 
toroidifications, and schematically we will have 
\vspace{-4mm} 
\(
\label{AutDer}
\hspace{9mm}
\xymatrix@R=-8pt{
& \protect\overbrace{\fcolorbox{black}{lightgray}{\text{ $M = M(\mc{T}^k S^4)$}}}^{\rm \bf \color{darkblue} Model \; of \; toroidification} \ar@{<->}[dr] \ar@{<->}[dl]& 
\\
 \protect\underbrace{\fbox{\text{Automorphisms Aut$(M)$}}}_{\rm \bf \color{darkblue} Lie \; Group} \;\; \; \; \; \; 
 \ar@{..>}[rr]_-{\rm Linearization/ differential \; of \; map}&  & \;\; 
 \protect\underbrace{\fbox{\text{Derivations Der$(M)$}}}_{\rm \bf \color{darkblue} Lie \; Algebra}
}
\)

\vspace{-6mm} 
\paragraph{Derivations.} 
In geometry, the Lie algebra of the group of diffeomorphisms of a manifold 
is the Lie algebra of vector fields, 
i.e., derivations of the algebra of functions. 
Also, in the case of a commutative algebra  $\mathfrak{c}$,
the automorphism group
of $\mathfrak{c}$,
\vspace{-1mm} 
$$
{\rm Aut}(\mathfrak{c}):=\big\{ \phi \in {\rm GL}(\mathfrak{c}) \;|\; 
\phi(g_1  g_2)= \phi(g_1) \phi(g_2), \; \phi(1) = 1 \quad
\forall g_1, g_2 \in \mathfrak{c} \big\},
$$

\vspace{-1mm} 
\noindent
is similarly related to the
Lie algebra of derivations,
\vspace{-1mm} 
$$
{\rm Der}(\mathfrak{c}):=\big\{ D \in \mathfrak{gl}(\mathfrak{c}) \;|\; 
D(g_1 g_2)= D(g_1) g_2 + g_1 D(g_2), \; D(1) = 0 \quad
\forall g_1, g_2 \in \mathfrak{c}) \big\}.
$$

\vspace{-1mm} 
\noindent
See \cite[III.10]{Bourbaki}\cite{GAS95} for the classical theory and \cite{schlessinger-stasheff2} 
for the case of graded algebras.

\smallskip 
The situation for rational models is similar but subtler. Let $X$ be a nilpotent space of finite type, $X_\Q$ be its rationalization, $M(X)$ be the Sullivan minimal model of $X$, and ${\Der}^\bullet M(X)$ be the 
differential graded Lie algebra (DGLA) of graded derivations of the underlying graded algebra of $M(X)$. 
Let $\Aut X$ be the topological monoid of homotopy self-equivalences of $X$ and ${\Aut}^h(X_\Q)$ be the (discrete) group of homotopy classes of homotopy self-equivalences of $X_\Q$. By the main theorem of rational homotopy theory, the latter group is isomorphic to the group ${\Aut}^h M(X)$ of homotopy classes of automorphisms of $M(X)$.
For $X$ a nilpotent CW complex which is either finite or has a finite Potsnikov tower (i.e., finite number 
of nonvanishing homotopy groups), the groups $\Aut M(X)$ of automorphisms of $M(X)$ and ${\Aut}^h M(X) \cong {\Aut}^h (X_\Q)$ are 
affine algebraic group schemes over $\Q$ \cite{Su77}\cite{BL05}. Also, with Sullivan's hint \cite[\S 11]{Su77} on the computation of rational homotopy groups of  $B \Aut X$ for a finite, simply connected CW complex $X$,
Tanr\'{e} \cite{tanre} and Schlessinger-Stasheff \cite{schlessinger-stasheff2} showed that the negative truncation $\Der^{\langle-1 \rangle} M(X)$ of ${\Der}^\bullet M(X)$ determines the rational homotopy type of 
the simply connected cover of the classifying space 
$B{\Aut} X$, that is,  $\Der^{\langle-1 \rangle} M(X)$ is a Quillen DGLA-model of the simply connected cover of $B{\Aut} X$.
Furthermore, $\pi_{1} B \Aut X \linebreak[0] = \Aut^h X$.
The Lie algebra of the algebraic group $\Aut M(X)$ may be identified with the Lie algebra $\Der M(X) := Z^0(\Der^\bullet M(X))$ of derivations of the DGCA $M(X)$, i.e., degree-zero derivations that commute with the differential in $M(X)$. The Lie algebra of the group $\Aut^hM(X)$ is identified in \cite{BL05} as the degree-zero cohomology $H^0(\Der^\bullet M(X))$ as well as the degree-zero Andr\'{e}-Quillen cohomology 
$H^0_{\operatorname{AQ}}(M(X), M(X))$. 
The rational homotopy properties of the DGLA of derivations are studied in \cite{tanre}\cite{LS15}. 
Extracting the derivations generally requires getting into explicit computations 
(see \cite{Gat97}\cite{schlessinger-stasheff2}), which is what we do in 
\cref{parabolic_action}.

\medskip 
Applying \cite{Su77}\cite{tanre}\cite{schlessinger-stasheff2} to $X=S^4$, we have 
$
\pi_{n}^\Q\big({\Aut} (S^4)\big) \cong H^{-n}\big({\rm Der}^{\langle-1\rangle}(M(S^4)) \big)
$ for $n \ge 1$.
In general, for a map $f: X \to Y$ with minimal Sullivan model $M(f): M(Y) \to M(X)$, with $X$ and $Y$  nilpotent
and $X$  finite, there is an isomorphism \cite{BL05}\cite{LS07}\cite{BM08}  (see \cite{Smith10} for a survey)
$
\pi_n^\Q \big({\Map}^f(X, Y)\big) \cong H^{-n}\big({\rm Der}^{\langle-1 \rangle}(M(Y), M(X); \linebreak[0] M(f)) \big)
$ for $n \ge 2$.
Taking $X=T^k$ and $Y=S^4$, we have
\vspace{-2mm} 
$$
\pi_n^\Q\big(\mathcal{L}^k S^4\big) \cong H^{-n}\big({\rm Der}^{\langle-1 \rangle}(M(S^4), M(T^k); M(0)) \big)\,.
$$



\vspace{-3mm} 
\paragraph{Degrees.}
If $A = M(X)$ is the Sullivan minimal model of a nilpotent space $X$ of finite type, then the group $\Aut A$  of automorphisms of $A$
is a rational algebraic group, which
we considered in \cite{SV1}, where we described its maximal split torus.
The Lie algebra of $\Aut A$ is the non-dg version of the Lie algebra $\Der A$ of derivations of $A$,i.e., degree-zero derivations of $A$ which commute with the differential. 
In considering the graded version,
there is more than one convention. 
We follow \cite{Good}\cite{BL05}\cite{schlessinger-stasheff2}\cite{BS23}, which use the positive-degree convention, different from \cite{Su77}\cite{tanre}\cite{LS07}\cite{Smith10}:
for us, a derivation of degree $n$ \textit{increases} the degree in $A$ by $n$. 
Thus, $\Der^\bullet A$ becomes a dg-Lie algebra with a differential of degree $+1$.
The ``derived,'' dg-version has the form $\Der^\bullet A = \bigoplus_{n=-\infty}^{\infty} \Der^n A$, with $\Der A = Z^0(\Der^\bullet A)$.

\paragraph{Symmetries.}
 The global symmetry that is generally expected to arise in the 
lower-dimensional supergravity theory upon reduction on $T^k$ comprises the Lie algebra
$\mathfrak{p}_k^{k(k)}=\mathfrak{gl}(k, \RR) \ltimes \RR^{n_k}$, where $n_k$ is a polynomial in $k$.
The first factor comes from the diffeomorphisms of the internal space
and the second factor comes from the shifts of the scalar fields
associated with the reduction and/or dualization of the form-field potentials
\cite{CJLP2}.
The Lie algebra that appears in maximal supergravity is the
larger, exceptional Lie algebra
$\mathfrak{e}_{k(k)}$, of which the above $\mathfrak{p}_k^{k(k)}$ is a maximal parabolic subalgebra. The main goal of this paper is to construct an action of $\mathfrak{p}_k^{k(k)}$ on the Sullivan minimal model $M(\mc{T}^k S^4)$ of the toroidification $\mc{T}^k S^4$, i.e., a Lie algebra homomorphism
\vspace{-3mm}
\begin{equation}
\label{parabolic-action}
\mathfrak{p}_k^{k(k)} \longrightarrow \Der M(\mc{T}^k S^4).
\end{equation}

\vspace{-2mm} 
The toroidification $\mathcal{T}^k S^4$ may be viewed as the \emph{universal target of $(11-k)$-dimensional supergravity} in rational homotopy theory via the continuous map
\vspace{-2mm}
\begin{equation}
\label{Phi}
X \longrightarrow \mc{T}^k S^4_\R
\end{equation}

\vspace{-2mm} 
\noindent adjoint to the DGCA map $M(\mc{T}^k S^4) \to \Omega^\bullet(X)$ given by mapping the generators of $M(\mathcal{T}^k S^4)$ to the corresponding supergravity form-fields on spacetime $X$, $\dim X = 11-k$, see \cite{V-RHT}. (The algebra map respects the differentials, because the differentials of the generators of $M(\mathcal{T}^k S^4)$ are the \emph{equations of motion $($EOMs$)$} of supergravity. This is the \emph{general Hypothesis H} for the reduction of M-theory on $T^k$, see \cite{SV2}.) The Lie group $\Aut M(\mathcal{T}^k S^4)$ and thereby its Lie algebra $\Der M(\mathcal{T}^k S^4)$, considered above, act on the generators of $M(\mathcal{T}^k S^4)$, respecting the differential. Then, by our main construction, the parabolic subalgebra $\mathfrak{p}_k^{k(k)}$ will also act on $M(\mathcal{T}^k S^4)$ via the homomorphism \eqref{parabolic-action}. We may view this action as the universal action on the form-fields respecting the EOMs. Note that this action does not induce an action on the actual form-fields defined in spacetime. It rather means that we have an action on certain objects (polynomial differential forms) on $\mathcal{T}^k S^4_\R$ which pull back to the form-fields along the map \eqref{Phi}. One can interpret these symmetries as \emph{infinitesimal symmetries of the equations of motion} in spacetime.

\newpage 
To move from Lie algebras to groups of symmetries, we note that the action \eqref{parabolic-action} of the parabolic subalgebra canonically induces an action of the corresponding simply connected Lie group $P_k^{k(k)}$ by automorphisms of $M(\mathcal{T}^k S^4)$:
\vspace{-2mm}
\[
P_k^{k(k)} \longrightarrow \Aut M(\mathcal{T}^k S^4).
\]

\vspace{-2mm} 
\noindent In view of the discussion of derivations of DGCAs above, we get an action of the Lie group $P_k^{k(k)}$ on $\mathcal{T}^k S^4_\R$ in the rational homotopy category over $\R$:
\vspace{-2mm}
\[
P_k^{k(k)} \longrightarrow \Aut M(\mathcal{T}^k S^4) \longrightarrow \Aut^h M(\mathcal{T}^k S^4) = \Aut^h \mathcal{T}^k S^4_\R.
\]

\vspace{-2mm}
\noindent This may be regarded as a \emph{topological realization of the symmetries of EOMs of supergravity} in $11-k$ dimensions.


\paragraph{Split real forms.} 
In \cite{SV1}\cite{SV2} we studied an action on $M(\mc{L}_c^k S^4)$ of the real split torus 
corresponding to the Cartan subalgebra of the split real form $\mathfrak{e}_{k(k)}$ of $\mathfr{e}_k$. 
Here, we also construct an action of the same torus on the RHT model $M(\mc{T}^k S^4)$ of the toroidification and
extend it to an action of the maximal parabolic subalgebra of $\mathfrak{e}_{k(k)}$. The standard split real form is obtained by taking only real combinations of the 
Chevalley generators, i.e., as the subalgebra of fixed points of the 
standard complex conjugation in that basis. 
This amounts to simply using $\mathbb{R}$ in place of $\mathbb{C}$ in the
definition of the split form.
See \cite[App. A]{HKN} and references therein. 
The split real form $\mathfrak{e}_{k(k)}$ is uniquely characterized as a real form $\g_0$ of $\mathfr{e}_k$ that has a Cartan subalgebra acting on $\g_0$ with real eigenvalues
(see, e.g., \cite[Chapter 26]{FH04}).
There is a direct sum decomposition 
$\mathfrak{e}_{k(k)}=\mathfrak{h} \oplus (\bigoplus \mathfr{e}_{k(k)}^\alpha)$ into 
the Cartan and one-dimensional weight spaces for the action of $\mathfrak{h}$; for each root $\alpha$ of $\mathfr{e}_k$, the subspace $\mathfr{e}_{k(k)}^\alpha$ is the intersection of the corresponding root space $\mathfrak{e}_{k}^\alpha
\subset \mathfrak{e}_k$ with $\mathfrak{e}_{k(k)}$. Each pair $(\mathfr{e}_{k(k)}^\alpha, \mathfr{e}_{k(k)}^{-\alpha})$
generates a sublagebra isomorphic to $\mathfrak{sl}(2, \RR)$.

\paragraph{The parabolic subalgebras.\!\!} 
Parabolic subalgebras in the complex case are discussed extensively in \cite{Ku02}. 
One can form the theory in general for any field \cite{Rous}\cite{Rous2}
(see also \cite{CW22}). The split real form is again obtained by replacing $\CC$ with $\R$ in the complex form. The real forms of the exceptional simple Lie algebras and their maximal parabolic subalgebras 
are explicitly determined in \cite{Dobrev08} (see also \linebreak[3] \cite{Dobrev13}\cite{Dobrev16}\cite{DM20}).
We describe those in \Cref{sec-para} while also providing physical interpretations, using \cite{LPS98} (see  
\cite{ADFFMT97}\cite{ADFFT97} also for a related but different perspective). 

\smallskip 
A choice of Cartan subalgebra $\mathfrak{h}$ determines a decomposition 
$\mathfrak{e}_{k(k)}=\mathfrak{h} \oplus \big(\!\bigoplus_{\alpha \in \Delta} \, \mathfrak{e}_{k(k)}^\alpha\big)$. 
To each choice of ordering of the root system 
$\Delta = \Delta^+ \cup \Delta^-$, one can associate a minimal parabolic, i.e., the \emph{Borel subalgebra}
$\mathfrak{b}= \mathfrak{h}\oplus (\bigoplus_{\alpha \in \Delta^+} \, \mathfrak{e}_{k(k)}^\alpha)$.
 We are interested in maximal (proper) parabolics, which contain 
$\mathfrak{b}$ as a subalgebra and have a corresponding decomposition 
$\mathfrak{p}(\Sigma)= \mathfrak{h}\oplus (\bigoplus_{\alpha \in \Delta(\p)} \, \mathfrak{e}_{k(k)}^\alpha)$, where for $k \ge 3$, $\Delta(\p)$ consists of all roots which can be written as sums of negatives of the roots in a fixed maximal proper subset $\Sigma$  of the set of simple roots, together with all positive roots. For $k \le 2$, the setup is somewhat different, see \Cref{small-k}.
Hence, parabolic subalgebras are in one-to-one correspondence 
with the nodes of the Dynkin diagram, taken one at a time. 
We are interested in the special node
$\alpha_k$ in the Dynkin diagram
\(
\label{Dynkin-schem}
\hspace{-1cm} 
\qquad
  \scalebox{.9}{$
  \raisebox{-30pt}{\begin{tikzpicture}[scale=.6]
    \foreach \x in {0,...,5}
    \draw[thick,xshift=\x cm] (\x cm,0) circle (2.5 mm);
    \foreach \y in {0, 1,2, 4}
    \draw[thick,xshift=\y cm] (\y cm,0) ++(.25 cm, 0) -- +(14.5 mm,0);
    \foreach \y in {3,4}
    \draw[dotted, thick,xshift=\y cm] (\y cm,0) ++(.3 cm, 0) -- +(14 mm,0);
    \draw[thick] (4 cm, -2 cm) circle (2.5 mm);
    \draw[thick] (4 cm, -3mm) -- +(0, -1.45 cm);
    \node at (0,.8) { $\alpha_1$};
       \node at (2,.8) { $\alpha_2$};
    \node at (4,.8) {$\alpha_3$};
    \node at (6,.8) {$\alpha_4$};
    \node at (8,.8) {$\alpha_{k-2}$};
    \node at (10,.8) {$\alpha_{k-1}$};
        \node at (5,-2) {\color{darkblue} $\alpha_k$};
    \node at (1.5,-2) {\fbox{\color{darkblue} \bf \small  Supergravity}};
  \end{tikzpicture}
  }
  $}
\)
which for $k = 3$ degenerates into
\vspace{-2mm}
\(
\label{Dynkin-schem-3}
\hspace{-1cm} 
\qquad
  \scalebox{.9}{$
  \raisebox{-20pt}{\begin{tikzpicture}[scale=.6]
    \foreach \x in {0,1}
    \draw[thick,xshift=\x cm] (\x cm,0) circle (2.5 mm);
    \draw[thick] (0,0) ++(.25 cm, 0) -- +(14.5 mm,0);
    \draw[thick] (3.5 cm, -1 cm) circle (2.5 mm);
    \node at (0,.8) { $\alpha_1$};
       \node at (2,.8) { $\alpha_2$};
        \node at (4.5,-1) {\color{darkblue} $\alpha_3$.};
  \end{tikzpicture}
  }
  $}
\)
For $k = 2$, the Dynkin diagram just becomes
$
\hspace{-.8cm} 
\qquad
  \scalebox{.9}{$
  \raisebox{-2pt}{\begin{tikzpicture}[scale=.5]
    \draw[thick] (0,0) circle (2.5 mm);
    \node at (.05,.65) { $\alpha_1$};
  \end{tikzpicture}
  }
  $}
$\!\!, and for $k=0, 1$, it is empty. However, noting that for $k=2$, the Dynkin diagram is of type $A_1$, which corresponds to the Lie algebra $\mathfr{sl}(2)$, we find it natural to think of the case $k=1$ as corresponding to $\mathfr{sl}(1) = 0$ and the case $k=0$ as corresponding to $\mathfr{sl}(0) := \varnothing$ and denote these two cases by $A_0$ and $A_{-1}$, respectively, see \Cref{table1}.

\medskip 
The maximal parabolics can also be seen via the Weyl chamber. Associated to the node $\alpha_i$ is the edge of the Weyl chamber 
given by 
$\mathcal{W}_{\alpha_i}=\{\lambda ~|~ (\lambda, \alpha_i)>0, (\lambda, \alpha_j)=0 \; \text{for} \; j\neq i \}$,
where $(-,-)$ is the inner product on $\h^*$ induced by the Killing form. Hence there is a correspondence (see \cite{Humphreys}\cite{CollingwoodMcGovern}\cite{FH04})
\vspace{-2mm}
$$
\xymatrix{
\colorbox{lightgray}{Single nodes of Dynkin diagram}
\ar@{<->}[r] & \colorbox{lightgray}{Edges of Weyl chamber}
\ar@{<->}[r] & \colorbox{lightgray}{Maximal parabolic subalgebras}
}
$$

\vspace{-7mm}
\paragraph{The symmetry patterns of  $E_k$  and their parabolics.} 
\medskip 
Considering the toroidification models of the 4-sphere, we establish 
actions on them of the parabolic subalgebras.
The structures and patterns are captured in Table 1, see details in \cref{sec-para} and \cref{parabolic_action}. The table reveals only part of the story:
in \cref{parabolic_action} we define actions of larger Lie algebras, namely, the maximal parabolic Lie algebras $\p_k$ of certain (trivial) central extensions $\g_k$ of the Lie algebras $\mathfr{e}_{k(k)}$. Note that the case $k=9$ is special, because the Lie algebra $\mathfr{e}_{9(9)}$, being an affine Kac-Moody algebra, already incorporates a (nontrivial) central extension, and we have $\g_9 = \mathfr{e}_{9(9)}$. This is why the parabolic subalgebra in the table for $k=9$ is larger than for other values of $k$, and we see $\mathfr{gl}$ instead of $\mathfr{sl}$ here.

\vspace{-1mm} 
{\small 
\begin{table}[H]
\centering
\begin{tabular}{ccllcl}
\hline
$k$ &  $D$ & {\bf Type $E_{k}$} & {\bf Split real Lie algebra} 
&  
\!\!{\bf Min.\ model} & {\bf Max.\ parabolic} $\mathfr{p}_{k}^{k(k)}$  \\
\hline
 \hline
\rowcolor{lightgray} 0 & 11 & $E_{0}=A_{-1}$ & $\mathfrak{e}_{0(0)} =\mathfr{sl}(0, \RR) = \varnothing$ 
&  $M(S^4)$       &  $\varnothing$ 
\\
1 & 10 & $E_{1}=A_0$ & $\mathfrak{e}_{1(1)} =\mathfr{sl}(1, \RR) = 0$ 
&  $M(\mc{T}^1 S^4)$ & $0$ 
\\
\rowcolor{lightgray} 2 & 9 & $E_{2}=A_{1}$ & $\mathfrak{e}_{2(2)} =\mathfr{sl}(2, \RR)$ 
&  $M(\mc{T}^2 S^4)$ &  $
\mathfr{sl}(2, \RR)$ 
\\
3 & 8 & $E_{3}=A_2 \times A_1$ &$\mathfrak{e}_{3(3)} =\mathfr{sl}(3, \RR) \, \times \,\mathfr{sl}(2, \RR)$  
&  $M(\mc{T}^3 S^4)$ &   $ \mathfrak{sl}(3, \RR)\, \oplus \, \mathfrak{so}(1,1) \, \oplus \, \mathfrak{n}^1 \; \,$
\\
\rowcolor{lightgray} 4 & 7 & $E_{4}=A_4$& $\mathfrak{e}_{4(4)} =\mathfr{sl}(5, \RR)$
&  $M(\mc{T}^4 S^4)$ & $\mathfrak{sl}(4, \RR)\, \oplus \,
\mathfrak{so}(1,1) \, \oplus \, \mathfrak{n}^4 \; \,$
\\
5 & 6 & $E_{5}=D_5$& $\mathfrak{e}_{5(5)} =\mathfr{so}(5,5)$  
&  $M(\mc{T}^5 S^4)$ & $\mathfrak{sl}(5, \RR)\, \oplus \,
\mathfrak{so}(1,1)
\, \oplus \,\mathfrak{n}^{10}$
\\
\rowcolor{lightgray} 6 & 5 & $E_{6}$&$\mathfrak{e}_{6(6)}$  
&  $M(\mc{T}^6 S^4)$ &  $ \mathfrak{sl}(6, \RR)\, \oplus \, \mathfrak{so}(1,1)  \, \oplus \, \mathfrak{n}^{21}$
\\
7 & 4 & $E_{7}$& $\mathfrak{e}_{7(7)}$   
&  $M(\mc{T}^7 S^4)$ &  $\mathfrak{sl}(7, \RR)\, \oplus \,\mathfrak{so}(1,1) \, \oplus \,\mathfrak{n}^{42}$
\\
\rowcolor{lightgray} 8 & 3 & $E_{8}$&$\mathfrak{e}_{8(8)}$ 
&  $M(\mc{T}^8 S^4)$ &   $\mathfrak{sl}(8, \RR) \, \oplus \, \mathfrak{so}(1,1) \, \oplus \,
\mathfrak{n}^{92}$\\ 
9 & 2 & $E_{9}$&$\mathfrak{e}_{9(9)}$ 
&  $M(\mc{T}^9 S^4)$ &   $\mathfrak{gl}(9, \RR) \, \oplus \, \mathfrak{so}(1,1) \, \oplus \,
\mathfrak{n}^{\infty}$\\ 
\rowcolor{lightgray} 10 & 1 & $E_{10}$&$\mathfrak{e}_{10(10)}$ 
&  $M(\mc{T}^{10} S^4)$ &   $\mathfrak{sl}(10, \RR) \, \oplus \, \mathfrak{so}(1,1) \, \oplus \,
\mathfrak{n}^{\infty}$\\ 
11 & 0 & $E_{11}$&$\mathfrak{e}_{11(11)}$ 
&  $M(\mc{T}^{11} S^4)$ &   $\mathfrak{sl}(11, \RR) \, \oplus \, \mathfrak{so}(1,1) \, \oplus \,
\mathfrak{n}^{\infty}$\\ 
\hline
\end{tabular}
\vspace{-2mm} 
\caption{\label{table1} \footnotesize The $E_k$ pattern in Lie theory, $D$-dimensional supergravity,
 and toroidifications of $S^4$.}
\end{table}
}




\vspace{-7.5mm} 
\paragraph{Results.} In this paper, we establish the following. 
\vspace{-2mm} 
\begin{itemize} 
 \setlength\itemsep{-4pt}
\item[{\bf (i)}] In \cite{SV1}, we observed that the
result of Vigu\'{e}-Poirrier and Sullivan \cite{VPS76}
on the minimal model of the free loop space of a simply connected space
holds in the more general 
case of a nilpotent, path-connected space.
Below in \Cref{free}, we generalize this further to free $k$-fold loop spaces. 

\item[{\bf (ii)}]  We observed in \cite{SV1} that the result of Vigu\'{e}-Poirrier and Burghelea \cite{vigue-burghelea}
on the minimal model of the cyclic loop space of a simply connected space $X$ holds for $X$ being nilpotent and path-connected
(Theorem \ref{CyclicModel} below). 
In Theorem \ref{tor}, we generalize this further to toroidification, after we 
demonstrate nilpotency in Prop. \ref{Prop-nil}.

\item[{\bf (iii)}] For the algebraic models, we establish an algebraic toroidification 
result in Theorem  \ref{dgca-adj} (and a truncated version in Corollary \ref{dgca-adj-trunc}), 
analogous to the topological adjunction 
of \cite{BSS} between the totalization and equivariant mapping space functors.
Shifting from the space to the algebra level allows for computability. 

\item[{\bf (iv)}] 
In Theorem \ref{p-action}, we establish a linear action of the parabolic Lie subalgebra 
$\p_k$ of the algebra $\g_k$ of type $E_{k(k)}$ on the Sullivan minimal toroidification model $M(\mc{T}^k S^4)$. This is explicit via the Chevalley generators of $\g_k$ (viewed as a Kac-Moody algebra) in both the finite-dimensional ($0\leq k \leq8$)
and the infinite-dimensional cases ($k \geq 9$). 

 \item[{\bf (v)}] For small $k$, we extend this in Theorem \ref{g-action} to a full action of the exceptional 
 Lie algebra $\g_k$.

\end{itemize} 

\vspace{-4mm} 
\paragraph{Physics consequence.} 
As a consequence of results in items {\bf (iv)} and {\bf (v)} above,  
we have a long-sought-after action of (maximal parabolic subalgebras of) the exceptional Lie algebras on the corresponding spaces of 
fields and their dynamics, captured universally by the toroidification spaces, 
both for the finite-dimensional Lie algebras and the infinite-dimensional (affine, hyperbolic, and more generally, Lorentzian) Kac-Moody algebras. 
This is a major step in establishing U-duality covariance in this setting. 

\paragraph{Acknowledgments.}
We are grateful to Ben Brubaker, Kentaro Hori, Urs Schreiber, and Taizan Watari for helpful discussions. 
The first author acknowledges the support by Tamkeen under the NYU Abu Dhabi Research Institute grant CG008.
The
work
of the second author
was 
supported by the World Premier International Research Center Initiative (WPI Initiative), MEXT, Japan, 
and a Collaboration grant from the Simons Foundation (\#585720).

\section{The Sullivan minimal model of the toroidification}
\label{Sec-toroidMod}

We will work over the ground field $\R$ throughout the paper, even in rational homotopy theory, unless stated otherwise. One reason is Hypothesis H, see \eqref{Phi}, uses the de Rham algebra of the spacetime and is inherently real. The other is that we want to focus on real Lie groups and real forms of Lie algebras, common to U-duality.

We will  consider, as in the Introduction, the $k$-fold free loop space $\mc{L}^k S^4 := \Map^0(T^k, S^4)$
and \emph{toroidification}
$\mc{T}^k S^4 := \mc{L}^k S^4\dslash T^k$. The notation $\Map^0$ indicates the path
component of the constant map, as the full mapping space $\Map$ would not be
connected for $k \ge 4$.
First, we set up some notation. Let $V = \bigoplus_{n} V^n$ be a graded vector
space. Define its \emph{connective looping $($truncated desuspension$)$} as the graded vector space $V[1]
= \bigoplus_{n} V[1]^n$ with
\begin{equation}
\label{cl}
V[1]^n := 
\begin{cases}
    V^{n+1}, & \text{if $n > 0$,}\\
    0 & \text{otherwise.}
\end{cases}
\end{equation}
In the ``simply connected'' case, i.e., 
when $V^n = 0$ for $n \le 1$, $V[1]$ just becomes the usual looping (desuspension) $V[1]^n = V^{n+1}$ for all $n \in \ZZ$.
Let
\vspace{-3mm} 
\begin{align*}
  s: V & \longrightarrow V[1],
  \\[-2pt] 
  v & \longmapsto sv \coeq v,
\end{align*}
be the natural map of degree $-1$. In \cite{SV1}, we observed that the following 
result of Vigu\'{e}-Poirrier and Sullivan \cite{VPS76} for a simply connected space of finite type
holds in the more general 
case of a nilpotent, path-connected space $X$ of finite type, the only difference being the use of connective looping above.

\begin{theorem}[Minimal model of free loop space]
\label{free-loop}
Let $X$ be a nilpotent, path-connected space of finite type
with Sullivan minimal model $(S(V), d)$. 
  Then the Sullivan minimal model of the free loop space
  $\mc{L} X = \Map^0(S^1, X) = \Map (S^1, X)$ is isomorphic to
  \vspace{-1mm} 
  $$
  (S(V \oplus V[1]), d_f)\,,
  $$
  with the differential $d_f$ defined
  by
\vspace{-3mm} 
\begin{align*}
 d_f v  := dv, \qquad 
 d_f sv  := -s dv,
\end{align*}
 where $s$ is extended to a unique degree-$(-1)$ derivation
of $S(V \oplus V[1])$ such that $s(u) = 0$ for $u \in V[1]$ (which implies $s^2 = 0$).
  \end{theorem}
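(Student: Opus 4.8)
The plan is to follow the classical Vigué-Poirrier--Sullivan argument, but replace the mapping $S^1 \to X$ with its pullback-corner construction and carefully track what the non-simply-connected hypothesis forces. The free loop space fits into a pullback square built from the path-space fibration: $\mc{L}X = \Map(S^1,X)$ is the homotopy pullback of the two evaluation-at-basepoint maps, i.e.\ the diagonal $X \to X \times X$ pulled back along the free path fibration $X^I \to X\times X$. On minimal models, $X \times X$ has model $(S(V) \otimes S(V), d\otimes 1 + 1 \otimes d)$, and the model of the diagonal is the multiplication map to $(S(V),d)$. Hence the model of $\mc{L}X$ is obtained by the standard relative Sullivan model / bar-type construction for this pullback. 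The new generators are a shifted copy of $V$; writing $V' = \{v\} \sqcup \{v''\}$ for the two copies in $S(V)\otimes S(V)$ and $\bar v$ for the path generator with $d\bar v = v' - v'' + (\text{correction})$, quotienting out the acyclic part $v' - v''$ leaves exactly one copy of $V$ together with a copy of $s V$, with $d(sv) = -s(dv)$ once one checks the derivation $s$ is well-defined with $s^2=0$. This is the content that must be re-examined in the nilpotent case.

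Concretely I would proceed in the following steps. First, I would recall that for a nilpotent $X$ of finite type the rationalization exists and behaves well (Hilton--Mislin--Roitberg, already cited), so $\mc{L}X_\Q = \mc{L}(X_\Q)$ on the relevant component and it suffices to work with the minimal model $(S(V),d)$ of $X$ itself. Second, I would set up the relative minimal model of the evaluation fibration $\mc{L}X \to X$ (evaluate at the basepoint of $S^1$), whose fiber is $\Omega X$ with model $(S(V[1]), 0)$ in the simply connected case and $(S(V[1]),0)$ with the truncation \eqref{cl} in general — this is precisely where the ``connective looping'' enters, because $\pi_0(\Omega X)$ and the non-simply-connected low-degree generators must be handled so that the fiber model has no generators in degrees $\le 0$. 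Third, I would write the candidate model $(S(V \oplus V[1]), d_f)$ with $d_f v = dv$, $d_f(sv) = -s(dv)$, verify $d_f^2 = 0$ (this reduces to $s d^2 = 0$ plus the Leibniz bookkeeping that $s$ commutes appropriately with $d$ up to sign, using $s|_{V[1]} = 0$ and $s^2=0$), and check that $(S(V\oplus V[1]), d_f)$ is a genuine Sullivan algebra, i.e.\ admits a filtration with $d_f$ decomposable increments — here the truncation guarantees the new generators sit in correct degrees and no generator of degree $\le 0$ is introduced even when $V^1 \ne 0$. Fourth, I would exhibit a quasi-isomorphism from this model to $A_{\mathrm{PL}}(\mc{L}X)$, either by comparing with the bar construction on $A_{\mathrm{PL}}(X)$ along $S^1 = \Delta^1/\partial$, or by the pullback-of-models argument above, using that both sides compute $H^*(\mc{L}X;\Q)$ and the map is a map of relative models over $(S(V),d)$ inducing an iso on fibers (Zeeman-type comparison / the fact that a map of fibrations inducing iso on base and fiber models induces iso on total space models).

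The main obstacle, as in \cite{SV1}, is the low-degree bookkeeping: in the genuinely nilpotent (non-simply-connected) case one must make sure that (a) the fiber $\Omega X$ does not contribute spurious generators in degree $0$ or below, which is exactly why $V[1]$ is defined with the truncation in \eqref{cl} rather than the naive desuspension, and (b) that $(S(V\oplus V[1]), d_f)$ is still \emph{minimal}, i.e.\ $d_f$ has no linear part. Minimality of $d$ on $V$ is given; the new part $d_f(sv) = -s(dv)$ is a sum of terms each of which is $s$ applied to a word of length $\ge 2$ in $V$, and $s$ of a length-$\ge 2$ word is again a sum of length-$\ge 2$ words in $V \oplus V[1]$, so $d_f$ is decomposable on $V[1]$ as well — but this needs the degree reasoning of the truncation to be airtight, since a priori $s$ could produce a length-one term if $dv$ had a component that is a single generator, which minimality of $d$ forbids. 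Once these points are secured, the quasi-isomorphism statement follows from the comparison theorem for relative Sullivan models, and the theorem is proved; I expect the write-up to cite \cite{VPS76} for the simply connected skeleton of the argument and \cite{SV1} for the observation that nilpotency suffices with the connective-looping modification.
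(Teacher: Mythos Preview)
The paper does not supply its own proof of this theorem: it is stated with attribution to Vigu\'{e}-Poirrier--Sullivan \cite{VPS76} for the simply connected case and to \cite{SV1} for the observation that the same argument goes through in the nilpotent case once $V[1]$ is read as the connective looping \eqref{cl}. Your proposal is exactly this classical route --- the evaluation fibration $\mc{L}X \to X$ with fiber $\Omega X$, the relative Sullivan model, and the minimality/degree bookkeeping with the truncation --- so it matches what the paper invokes by citation.
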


It is useful to think of the desuspension map $s$ as
an operation on $S(V \oplus V[1])$, especially when we deal with the
following iteration of the above construction.

\begin{prop}[Minimal model of free $k$-fold loop space]
  \label{free}
  Let $X$ is a nilpotent,
  path-connected space of finite type with Sullivan minimal model $(S(V), \linebreak[0] d)$.
  Them the Sullivan minimal model of the $k$-fold free loop space $\mc{L}^k
  X := \Map^0 (T^k, \linebreak[0] X)$  is
  freely generated as a graded commutative, unital algebra by a
  graded vector space obtained as follows. 

\vspace{-3mm} 
 \begin{itemize}[leftmargin=22pt]
 \setlength\itemsep{-2pt}
 \item[{\bf (a)}]   Take
  the graded commutative algebra $\RR[s_1, \dots , s_k]$ on $k$
  degree-$(-1)$ variables and generate a free graded $\RR[s_1, \dots ,
    s_k]$-module
  \[
U := \RR[s_1, \dots , s_k] \otimes V.
\]
\item[{\bf (b)}] Then take the positive-degree part $U^{>0}$ of the result and generate
a free graded commutative algebra, $S(U^{>0})$. This is the underlying
graded commutative algebra of the Sullivan minimal model $M(\mc{L}^k
X)$ of $\mc{L}^k X$.

\item[{\bf (c)}] The differential $d_f$ is determined by the relations
\vspace{-2mm} 
  \begin{align*}
    d_f v :=& \, dv, \qquad \text{for $v \in V$},\\
    [d_f,s_i]  =& \, 0,    \qquad \;\; 1 \le i \le k,
  \end{align*}

  \vspace{-3mm} 
\noindent
  where the operator of multiplication
  \vspace{-3mm} 
  \begin{align}
  s_i: U^{> 0} &\longrightarrow U^{>0},
  \label{s-op}
  \\[-2pt]
  v & \longmapsto s_i v,
  \nonumber 
\end{align}

\vspace{-3mm} 
\noindent is extended to $S(U^{>0})$ as a degree-$(-1)$ derivation.
\end{itemize} 
  \end{prop}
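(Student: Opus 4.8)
The plan is to obtain the model of $\mc{L}^k X$ by iterating Theorem~\ref{free-loop} $k$ times, using the identification $\mc{L}^k X = \mc{L}(\mc{L}^{k-1} X)$, and then to check that the outcome of the iteration is precisely the algebra described in (a)--(c), independently of the order in which the circle factors are looped. First I would set up the induction on $k$, the base case $k=1$ being Theorem~\ref{free-loop} itself (with $s_1 = s$ and $\RR[s_1]$ the exterior algebra on one degree-$(-1)$ generator, which is what ``graded commutative algebra on a degree-$(-1)$ variable'' means). For the inductive step, suppose $M(\mc{L}^{k-1}X) = (S(U_{k-1}^{>0}), d_f)$ with $U_{k-1} = \RR[s_1,\dots,s_{k-1}]\otimes V$ and the $s_i$ acting as degree-$(-1)$ derivations commuting with $d_f$. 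Since $X$ is nilpotent, path-connected, and of finite type, each path component of $\mc{L}^{k-1}X$ is again nilpotent, path-connected, and of finite type by the results cited in the Introduction (\cite{HMRS78}), so Theorem~\ref{free-loop} applies to it.

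The key technical point is the behavior of the connective-looping (truncated desuspension) functor $V \mapsto V[1]$ under iteration, and this is where I expect the main obstacle to lie. Applying Theorem~\ref{free-loop} to $(S(U_{k-1}^{>0}),d_f)$ produces $S(U_{k-1}^{>0} \oplus (U_{k-1}^{>0})[1])$ with a new derivation $s_k$ of degree $-1$ killing the new generators; one must then argue that $U_{k-1}^{>0} \oplus (U_{k-1}^{>0})[1]$ is, as a graded vector space, exactly $(\RR[s_1,\dots,s_k]\otimes V)^{>0} = U_k^{>0}$, with $s_k$ realized as multiplication by the new exterior variable. The subtlety is the truncation: $(U_{k-1}^{>0})[1]$ discards the degree-$\le 0$ part of $s_k \cdot U_{k-1}^{>0}$, and one has to verify that this truncation commutes appropriately with first forming $U_{k-1}^{>0}$ from $U_{k-1}$. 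Concretely, the graded vector space $U_k = \RR[s_1,\dots,s_k]\otimes V$ splits as $(\RR[s_1,\dots,s_{k-1}]\otimes V) \oplus (s_k\cdot\RR[s_1,\dots,s_{k-1}]\otimes V) = U_{k-1} \oplus s_k U_{k-1}$, and since $s_k$ has degree $-1$ one checks degree by degree that $U_k^{>0} = U_{k-1}^{>0} \oplus (s_k U_{k-1})^{>0}$ and that $(s_k U_{k-1})^{>0} \cong (U_{k-1}^{>0})[1]$ via $s_k v \mapsto sv$ — the only generators of $s_k U_{k-1}$ of nonpositive degree come from degree-$1$ elements of $U_{k-1}$, which are exactly the ones truncated away in $(U_{k-1}^{>0})[1]$, while degree-$1$ elements of $U_{k-1}$ that are \emph{not} already truncated (i.e. those in $U_{k-1}^{>0}$ of degree $1$) contribute degree-$0$ elements $s_k v$ that must be dropped, matching the definition \eqref{cl}. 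This bookkeeping is routine but is the heart of the matter.

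Once the underlying graded algebra is identified, the differential and the derivations $s_i$ transfer for free: Theorem~\ref{free-loop} gives $d_f v = dv$ and $d_f(s_k w) = -s_k(d_f w)$, i.e. $[d_f, s_k] = 0$ on the new generators, hence on all of $S(U_k^{>0})$ since both sides are derivations and agree on generators; and the old relations $[d_f, s_i] = 0$ for $i < k$ persist because the extension of $d_f$ in Theorem~\ref{free-loop} is by the formula $d_f(sw) = -s(d_f w)$, which manifestly commutes with the old $s_i$ (one checks $[s_i, s_k] = 0$ as derivations on generators, so $[d_f, s_i]$ still vanishes after adjoining the $s_k$-generators). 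Finally I would note that the construction is symmetric in $s_1,\dots,s_k$ — looping the circle factors of $T^k$ in any order yields the same minimal model up to the evident relabeling, which is consistent since $\mc{L}^k X = \Map^0(T^k, X)$ does not depend on an ordering of the factors — so the recursion is well-defined and the statement follows. The minimality of the resulting model is inherited at each step from Theorem~\ref{free-loop}, whose output is already minimal (the differential is decomposable because $d$ is and $s_i$ lowers word-length only from the new linear generators, whose differential $-s_i\,dv$ is decomposable whenever $dv$ is).
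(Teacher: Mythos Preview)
Your proposal is correct and follows exactly the approach the paper indicates: the paper does not give an explicit proof of this proposition, but the sentence immediately preceding it (``It is useful to think of the desuspension map $s$ as an operation on $S(V \oplus V[1])$, especially when we deal with the following iteration of the above construction'') makes clear it is obtained by iterating Theorem~\ref{free-loop}, which is precisely what you do, together with the truncation bookkeeping $U_{k-1}^{>0}\oplus (U_{k-1}^{>0})[1]\cong U_k^{>0}$ that the paper leaves implicit.
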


  \begin{example}[$k$-fold free loop space of the 4-sphere]
  More explicitly, for $X = S^4$ we have:
  \vspace{-1mm} 
  \begin{align*}
    M(\mc{L}^k S^4) & = \left(\RR[s_{i_1} \dots s_{i_p} g_4, s_{j_1}
      \dots s_{j_q} g_7], \;
      d_f\right), \;\;
  \scalebox{0.8}{$ 0 \le p \le 3, \;\; 1 \le i_1 < \dots < i_p \le k, \;\;
    0 \le q \le 6, \;\; 1 \le j_1 < \dots < j_q \le k,$}
    \\
    d_fg_4 &= 0, \qquad \qquad d_fg_7 = -\tfrac{1}{2} g_4^2,\\
    d_f s_i x &= -s_i d_f x \qquad \text{for all $x \in M(\mc{L}^k S^4)$} \;
    .
  \end{align*}
\end{example} 
There is also a computation of the Sullivan minimal model of the 
cyclic loop space (or cyclification) $\mc{L}_c X$, which is same as the single toroidication $\mc{T}^1 X$ of $X$. We observe in \cite{SV1} that the following
result of Vigu\'{e}-Poirrier and Burghelea \cite{vigue-burghelea}, originally done for a simply connected space,
holds for $X$ being nilpotent and path-connected, if $V[1]$ is understood as the connective looping of $V$ (see \eqref{cl}).

\begin{theorem}[Minimal model of cyclic loop spaces]
  \label{CyclicModel}
  Let $X$ be a nilpotent, path-connected space of finite type with Sullivan minimal model
  $(S(V), d)$. Then the Sullivan minimal model of the cyclification
  $\mc{L}_c X = \mc{T}^1 X = \Map^0(S^1, X) \dslash S^1 = \Map(S^1, X) \dslash S^1$ is isomorphic to
  \vspace{-3mm} 
  \[
  (S(V \oplus V[1]) \otimes \RR[w]), d_c),
  \]

  \vspace{-1mm} 
\noindent with the differential $d_c$ defined on all $v \in V$ by
\vspace{-2mm} 
\begin{align*}
d_c v := d v + w \cdot sv\, ,\qquad 
  d_c sv  := -s d v\, , \qquad 
  d_c w  := 0\,,
\end{align*}

\vspace{-2mm} 
\noindent  where $s$ is extended to a unique degree-$(-1)$ derivation
of $S(V \oplus V[1])$ such that $s(u) = 0$ for $u \in V[1]$, as in \Cref{free-loop}.
  \end{theorem}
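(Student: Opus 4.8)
The plan is to run the Vigué-Poirrier--Burghelea argument, observing that it only uses nilpotency, finite type, and the free-loop-space model, all of which are available here. Concretely, realize $\mc{L}_c X = \mc{T}^1 X$ as the Borel construction $\mc{L}X \times_{S^1} ES^1$ of the loop-rotation action of $S^1$ on the free loop space, feed in the model $(S(V \oplus V[1]), d_f)$ of $\mc{L}X$ from \Cref{free-loop}, and apply the standard rational model of the Borel construction of a circle action. So I would (1) check that the proposed CDGA $(A, d_c) \coeq (S(V \oplus V[1]) \otimes \RR[w], d_c)$ is a minimal Sullivan algebra, and (2) identify it with a Sullivan model of $\mc{L}_c X$.

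For step (1): first $d_c^2 = 0$; since $d_c$ is a derivation it suffices to check on generators. On $w$ it is immediate; on $v \in V$ one uses that $d_c$ acts as a derivation so $d_c(x) = dx + w\, s(x)$ for $x \in S(V)$, together with $d^2 = 0$ and $s^2 = 0$; on $sv \in V[1]$ one uses the anticommutation $d_c s = -s d_c$, which itself follows by checking $[d_c, s] = 0$ on generators. Next, $(A, d_c)$ is a Sullivan algebra: order the generators by $w$ first, then, along a Sullivan well-ordering $\{v_\alpha\}$ of a homogeneous basis of $V$, insert each pair in the order $s v_\alpha < v_\alpha$; then $d_c w = 0$, $d_c(s v_\alpha) = -s(d v_\alpha)$ lies in the subalgebra on $\{v_\beta, s v_\beta : \beta < \alpha\}$, and $d_c v_\alpha = d v_\alpha + w \cdot s v_\alpha$ lies in the subalgebra on $\{w\} \cup \{v_\beta, s v_\beta : \beta < \alpha\} \cup \{s v_\alpha\}$. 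Finally minimality: $d_c$ has no linear part, since $dv$ is decomposable (minimality of $(S(V),d)$), $w \cdot sv$ is decomposable, and $s$ preserves word length so $s(dv)$ is decomposable. This step is routine.

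For step (2): by definition $\mc{L}_c X = \mc{L}X \dslash S^1$ is the Borel construction of the loop-rotation action. Since $X$ is nilpotent, path-connected, of finite type, so is $\mc{L}X$ (by \cite{HMRS78}; cf.\ the discussion preceding \Cref{Prop-nil}), with minimal model $(S(V \oplus V[1]), d_f)$ by \Cref{free-loop}. The Borel fibration $\mc{L}X \to \mc{L}_c X \to BS^1$ has simply connected base $BS^1 \simeq \CC P^\infty$ with minimal model $(\RR[w], 0)$, $|w| = 2$, so $\mc{L}_c X$ is nilpotent of finite type. Now invoke the rational model of the Borel construction of a circle action (this is exactly the mechanism of \cite{vigue-burghelea}; see also \cite{Jones}): if $S^1$ acts on a nilpotent finite-type space $Y$ with minimal model $(M, d)$, and the action is represented on $(M,d)$ by a degree-$(-1)$ derivation $\iota$ with $\iota^2 = 0$ and $[d, \iota] = 0$, then $(M \otimes \RR[w], d + w\iota)$ is a Sullivan model of $Y \times_{S^1} ES^1$. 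For $Y = \mc{L}X$ with loop rotation, the same computation as in the simply connected case \cite{VPS76}\cite{vigue-burghelea} --- the action of the fundamental class of $S^1$, i.e.\ integration along the $S^1$-direction --- identifies $\iota$ with the desuspension derivation $s$, and one checks directly $s^2 = 0$ and $[d_f, s] = 0$. Hence a Sullivan model of $\mc{L}_c X$ is $(S(V \oplus V[1]) \otimes \RR[w], d_f + w s)$, and evaluating $d_f + w s$ on generators gives exactly $d_c$: on $v \in V$, $d_f v + w \cdot s v = dv + w \cdot sv$; on $sv \in V[1]$, $d_f(sv) + w \cdot s(sv) = -s\, dv$; on $w$, zero. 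With step (1), this shows $(S(V \oplus V[1]) \otimes \RR[w], d_c)$ is the Sullivan minimal model of $\mc{L}_c X = \mc{T}^1 X$.

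The main obstacle is the justification, in the nilpotent rather than simply connected setting, that the circle-action Borel model takes the closed form $(M \otimes \RR[w], d + w\iota)$ with no higher-order $w$-corrections and with $\iota = s$. Two points need care: that the free-loop-space model with connective looping of \Cref{free-loop} carries the loop-rotation action exactly as the derivation $s$ (rather than $s$ plus lower-order corrections forced by the degree-$1$ part $V^1$), which one pins down either via the cofree $H_*(S^1; \R)$-comodule structure on the model or by homotopy transfer along the evaluation map $S^1 \times \mc{L}X \to X$; and that the Koszul--Sullivan extension differential of the Borel fibration normalizes to $d_f + w s$ exactly, which holds because $C_*(S^1) \simeq \RR[\epsilon]/(\epsilon^2)$ has a single non-unit generator, so the twisting cochain is linear in $w$, and $s^2 = 0$, $[d_f, s] = 0$ already make $d_f + w s$ square-zero, leaving no obstruction. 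Everything else is routine. An alternative, perhaps closer to how \cite{SV1} passes from simply connected to nilpotent $X$, is a Postnikov induction: present $X$ as an iterated principal fibration with Eilenberg--MacLane fibers, use that cyclification preserves the relevant fibration sequences, and reduce to the $K(\pi, n)$ base cases --- but the Borel-construction argument above is cleaner to state.
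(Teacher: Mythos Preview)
Your approach is correct and is essentially the classical Vigu\'{e}-Poirrier--Burghelea argument adapted to the nilpotent setting, which is precisely what the paper invokes: the paper does not give an independent proof of this particular statement but cites it as established in \cite{vigue-burghelea} for simply connected $X$ and in \cite{SV1} for nilpotent $X$ with the connective looping convention. So at the level of \emph{this} theorem, you are reproducing the cited proof rather than diverging from anything the paper itself does.

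Where there is a genuine methodological difference is that the paper proves the general toroidification result, \Cref{tor}, of which \Cref{CyclicModel} is the case $k=1$, by a quite different route. Instead of modeling the circle action by a derivation and writing down the Borel-construction differential directly, the paper sets up an algebraic toroidification/totalization adjunction (\Cref{dgca-adj} and \Cref{dgca-adj-trunc}) mirroring the topological adjunction $\Tot_G \dashv \operatorname{Cyc}_G$ of \cite{BSS}, and then deduces that $\Trd_k(M)$ models $\mc{T}^k X$ from uniqueness of adjoints under the equivalence of rational homotopy categories. Your Borel-construction argument is more elementary and hands-on for $k=1$, and makes the geometric origin of the degree-$(-1)$ derivation $s$ transparent; the paper's adjunction argument is more categorical, scales cleanly to arbitrary $k$, and sidesteps exactly the delicate point you flag --- namely, showing that the twisting of the relative Sullivan model normalizes to $d_f + w s$ with no higher corrections --- by characterizing the model via a universal property rather than by direct construction.
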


Of course, one can identify the Sullivan minimal model of the iterated cyclification $\mc{L}_c^k X$, 
just as in \Cref{free}, but this space is quite different from the toroidification $\mc{T}^k X$ and, 
as far as we know, the Sullivan minimal model of it has not been identified. To that end, we first
establish nilpotency in an analogous way to the one we used to handle the cyclification in \cite{SV1}.

\begin{prop}[Basic properties of  the toroidification space] 
\label{Prop-nil}
If $X$ is path-connected, nilpotent, and  of finite type,
then so is its $k$-toroidification $\mc{T}^k X := \mc{L}^k X \dslash T^k$. 
  \end{prop}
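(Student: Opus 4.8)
The plan is to reduce each of the three properties — path-connectedness, finite type, and nilpotency — of $\mc{T}^k X = \mc{L}^k X \dslash T^k$ to the corresponding property of the iterated free loop space $\mc{L}^k X$, which is already under control by the discussion of nilpotency in the Introduction (citing \cite{HMRS78}) and by \Cref{free}. First I would observe that $\mc{L}^k X = \Map^0(T^k, X)$ is path-connected by construction (we took the path component of the constant map), that it is of finite type because its minimal model, identified in \Cref{free}, is freely generated by $U^{>0}$ where $U = \RR[s_1, \dots, s_k] \otimes V$ is degreewise finite-dimensional when $V$ is, and that each component of $\Map(T^k, X)$ is nilpotent since $X$ is, by \cite{HMRS78}. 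So the real content is: passing to the homotopy quotient by the torus action preserves these three properties.

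For the Borel construction $\mc{T}^k X = \mc{L}^k X \times_{T^k} ET^k$, I would use the fibration
\[
\mc{L}^k X \longrightarrow \mc{T}^k X \longrightarrow BT^k.
\]
Path-connectedness of $\mc{T}^k X$ follows from the long exact sequence in homotopy, since $\mc{L}^k X$ is connected and $BT^k$ is connected. Finite type would follow from the Serre spectral sequence of this fibration with $\QQ$-coefficients: $H^*(BT^k; \QQ) = \QQ[x_1, \dots, x_k]$ with $|x_i| = 2$ is of finite type, $H^*(\mc{L}^k X; \QQ)$ is of finite type by the first paragraph, and the $E_2$-page is their tensor product (the $T^k$-action on $X$ by rotation is homotopically trivial on $\mc{L}^k X$ up to the component issue, so the local system is trivial on the relevant component — more carefully, one restricts to the component of the constant loop, on which the monodromy acts trivially), hence so is $E_\infty$ and thus $H^*(\mc{T}^k X; \QQ)$. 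Alternatively, and more cleanly, I would invoke \Cref{tor} from the same paper (referenced in item {\bf (ii)} as giving the minimal model of the toroidification) once nilpotency is established — but since \Cref{tor} presumably depends on this proposition, I would instead give a direct spectral-sequence argument for finite type here, or better, bootstrap off the cyclification case: writing $\mc{T}^k X$ via the iterated mapping-space/homotopy-quotient structure and inducting on $k$ using that $\mc{T}^1 Y = \mc{L}_c Y$ is path-connected, nilpotent, and of finite type whenever $Y$ is (this last being exactly the analogous statement proved in \cite{SV1}, alluded to right before the proposition).

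The main obstacle — and the step deserving real care — is nilpotency of $\mc{T}^k X$, specifically showing that $\pi_1(\mc{T}^k X)$ is nilpotent and acts nilpotently on all higher $\pi_n$. From the fibration above, $\pi_1(\mc{T}^k X)$ sits in an extension $1 \to \pi_1(\mc{L}^k X) \to \pi_1(\mc{T}^k X) \to \pi_1(BT^k) = \ZZ^k \to 1$ (using $\pi_2(BT^k) = 0$ and $\pi_1(\mc{L}^k X) \to \pi_1(\mc{T}^k X)$ injective as $\pi_2(BT^k)=0$). An extension of a nilpotent group by a nilpotent group need not be nilpotent, so the key point is to control the action: the $\ZZ^k = \pi_1(BT^k)$ factor acts on $\pi_1(\mc{L}^k X)$ and on the $\pi_n(\mc{L}^k X)$ via the monodromy of the fibration, which is induced by the $T^k$-rotation action on $\mc{L}^k X = \Map^0(T^k, X)$. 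Since this rotation action is homotopic to the identity on $\mc{L}^k X$ (translation of the torus is isotopic to the identity), the monodromy is trivial on homotopy groups, so the extension has trivial action of the quotient on the higher homotopy of the fiber, and on $\pi_1$ it is at worst nilpotent after combining with nilpotency of $\pi_1(\mc{L}^k X)$; one then checks that $\pi_1(\mc{T}^k X)$ acts nilpotently on each $\pi_n(\mc{T}^k X)$ using the comparison with $\pi_n(\mc{L}^k X)$ and $\pi_n(BT^k)$ and the triviality of the monodromy. I would model this argument on the $k=1$ case handled in \cite{SV1} — citing it explicitly — and remark that the only extra bookkeeping for general $k$ is tracking the $\ZZ^k$ (rather than $\ZZ$) of deck transformations, which is handled identically since each of the $k$ circle factors acts by an isotopy-to-identity rotation.
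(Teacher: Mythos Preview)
Your proposal contains a concrete error in the homotopy groups of $BT^k$ that derails the nilpotency argument. You write that $\pi_1(BT^k) = \ZZ^k$ and $\pi_2(BT^k) = 0$, but $BT^k \simeq (\CC P^\infty)^k$ is simply connected with $\pi_2(BT^k) = \ZZ^k$. Consequently the extension $1 \to \pi_1(\mc{L}^k X) \to \pi_1(\mc{T}^k X) \to \ZZ^k \to 1$ does not exist, and the whole discussion of monodromy of the fibration $\mc{L}^k X \to \mc{T}^k X \to BT^k$ --- which you set up to control the $\ZZ^k$-action on $\pi_*(\mc{L}^k X)$ --- is vacuous, since the base is simply connected. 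With the correct homotopy groups the long exact sequence gives instead $\ZZ^k = \pi_2(BT^k) \to \pi_1(\mc{L}^k X) \to \pi_1(\mc{T}^k X) \to 0$, so $\pi_1(\mc{T}^k X)$ is a \emph{quotient} of $\pi_1(\mc{L}^k X)$, which is fine for nilpotency of $\pi_1$ itself but leaves you without an argument for the action of $\pi_1(\mc{T}^k X)$ on $\pi_2(\mc{T}^k X)$, where the $\ZZ^k$ actually appears.

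The paper avoids this by using the \emph{other} fibration associated to the Borel construction, namely $T^k \to \mc{L}^k X \times ET^k \to \mc{T}^k X$. The key observation is that the inclusion of the fiber $T^k$ factors through the contractible space $ET^k$ (it is the $T^k$-orbit of the constant loop), hence is null-homotopic. The long exact sequence then gives $\pi_1(\mc{T}^k X) \cong \pi_1(\mc{L}^k X)$ and $\pi_n(\mc{T}^k X) \cong \pi_n(\mc{L}^k X)$ for $n \ge 3$, with $\pi_2(\mc{T}^k X)$ an extension of $\ZZ^k$ by $\pi_2(\mc{L}^k X)$ on which the conjugation action of $\pi_1$ is trivial. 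This identifies the $\pi_1$-action on all homotopy groups directly with that of $\mc{L}^k X$ (plus a trivial piece), so nilpotency is inherited immediately. Your fibration over $BT^k$ could be made to work after correcting the error, but the null-homotopy of the fiber inclusion in the paper's fibration is what makes the bookkeeping clean.
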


\begin{proof}
The case $k=1$ is done in  \cite[Prop. 2.4]{SV1}, and extending this to general $k$ goes 
through similarly. First of all, it is proven there that for a path-connected, nilpotent space $X$ of finite type, the free loop space $\mc{L} X$ is of the same kind. By induction, so will be $\mc{L}^k X$.

Now, the homotopy groups of the toroidification space $\mc{T}^k X$ are the same as those of $\mc{L}^k X$, except for $\pi_{2}(\mc{T}^k X)$, which adds to $\pi_{2}(\mc{L}^k X)$ a copy of the abelian group $\ZZ^k=\pi_1(T^k)$,  
on which the conjugation action of $\pi_1(\mc{T}^k X) = \pi_1(\mc{L}^k X)$ is trivial. This all can be seen 
from the fiber sequence
\vspace{-2mm} 
\begin{equation*}
T^k \longrightarrow \mc{L}^k X \times ET^k \longrightarrow \mc{T}^k X\,,
\end{equation*}

\vspace{-2mm} 
\noindent in which the inclusion of the fiber $T^k$ into the $T^k$-orbit of the constant loop in $\mc{L}^k X$ factors 
through the contractible space $ET^k$.
\end{proof}

\begin{theorem}[Minimal model of toroidification]
\label{tor}
Let $X$  be a path-connected, nilpotent space of finite type with Sullivan minimal model
  $(S(V), d)$.
  Then the Sullivan minimal model of the toroidification $\mc{T}^k X :=
  \mc{L}^k X \dslash T^k$ 
  is freely generated as a graded
  commutative algebra by the graded vector space $U_k^{>0} \oplus \RR w_1 \oplus \dots \oplus \RR w_k$, that is to say,
  \vspace{-2mm} 
  \begin{gather*}
M(\mc{T}^k X) = S\big(U^{>0} \oplus \RR w_1 \oplus \dots \oplus \RR w_k\big) = S(U^{>0}) \otimes \RR[w_1, \dots, w_k],
\end{gather*}

\vspace{-2mm} 
\noindent   where,   as in Proposition $\ref{free}$, 
\vspace{-1mm} 
\item {\bf (i)}   $
 U_k^{>0} = \big( \RR[s_1, \dots , s_k] \otimes V \big)^{>0}
  $;
  
\item {\bf (ii)} $w_1, \dots, w_k$ are generators of degree
  $\abs{w_i} = 2$;
  
 \item {\bf (iii)} The differential $d_t$ is determined by the relations
 \vspace{-2mm} 
  \begin{align*}
    d_t v & = dv + \sum_{i=1}^k w_i \cdot s_i v, \hspace{-4cm} && \text{for $v \in V$},
    \\[-2pt]
   [d_t, s_i] & = 0, \quad d_t w_i = 0, 
    \hspace{-4cm} && \scalebox{0.8}{$ 1 \le i \le k$},
  \end{align*}

  \vspace{-1mm} 
\noindent  where the operator of multiplication $s_i$ \eqref{s-op} 
is extended to $S(U_k^{>0}) \otimes \RR[w_1, \dots, w_n]$ as a degree-$(-1)$ derivation satisfying
\vspace{-2mm} 
\[
s_i w_j = 0 \qquad \text{for all $j$}.
\]

\end{theorem}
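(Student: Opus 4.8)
The plan is to prove \Cref{tor} by induction on $k$, using the single-toroidification result (\Cref{CyclicModel}) as the base case and as the engine for the inductive step. The key observation that makes induction work is the identification $\mc{T}^k X \simeq \mc{T}^1\big(\mc{L}^{k-1}X\big)\dslash T^{k-1}$ up to the appropriate homotopy quotients; more precisely, since the $T^k = S^1\times T^{k-1}$ action on $\mc{L}^k X = \Map^0(T^k,X)$ decomposes so that the last $S^1$-factor rotates the last loop coordinate, one can compute $\mc{T}^k X$ as an iterated Borel construction: first form $\mc{L}X$-type cyclification in the last variable, then toroidify the remaining $T^{k-1}$ on the resulting space. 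By \Cref{Prop-nil} (and the nilpotency of $\mc{L}^k X$ established in its proof), every space in sight is path-connected, nilpotent, and of finite type, so \Cref{CyclicModel} applies at each stage. Starting from $M(\mc{L}^{k-1}X)$, which by \Cref{free} equals $S(U_{k-1}^{>0})\otimes\RR[w_1,\dots,w_{k-1}]$ with differential $d_t$ involving $s_1,\dots,s_{k-1}$, applying \Cref{CyclicModel} in the new variable $s_k$ adjoins the generators $s_k\cdot(\text{everything})$ together with a single $w_k$ of degree $2$, and sets $d_t v' = d_{t,\,k-1}v' + w_k\, s_k v'$ on the old generators $v'$, $d_t(s_k v') = -s_k d_{t,k-1}v'$, $d_t w_k = 0$. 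The content of the inductive step is to check that this is exactly the stated formula, i.e.\ that $s_1,\dots,s_{k-1}$ and $s_k$ all mutually commute as derivations, that $s_k w_j = 0$ for $j<k$ (so that $s_k$ kills the previously-introduced weights), and that the resulting generating space $\big(\RR[s_1,\dots,s_{k-1}]\otimes V\big)[s_k]$, positive-degree truncated, coincides with $U_k^{>0}=\big(\RR[s_1,\dots,s_k]\otimes V\big)^{>0}$.

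Concretely, I would carry out the following steps. \textbf{Step 1:} Set up the homotopy-theoretic identification $\mc{T}^k X \simeq \big(\mc{T}^1(\mc{L}^{k-1}X)\big)\dslash T^{k-1}$, being careful about which group acts on which factor — this is the point where the $(k-1)$-toroidification datum $\mc{L}^{k-1}X\dslash T^{k-1}$ interacts with the extra $S^1$; a Fubini-type argument for iterated homotopy quotients, $(Y\dslash S^1)\dslash T^{k-1}\simeq Y\dslash T^k$ when the actions commute, does the job. \textbf{Step 2:} Feed $M(\mc{L}^{k-1}X)$ from \Cref{free} into \Cref{CyclicModel}; here one must note that \Cref{CyclicModel} is stated for the \emph{free loop space} of a space, so one needs the minimal model of $\mc{L}(\mc{L}^{k-1}X)=\mc{L}^k X$, which is again given by \Cref{free-loop}/\Cref{free}, and then quotient. \textbf{Step 3:} Verify the commutation relations $[s_i,s_j]=0$ as derivations of $S(U_k^{>0})\otimes\RR[w_1,\dots,w_k]$ — this is a direct check on generators since each $s_i$ is a derivation and the $s_i$ on the free module $\RR[s_1,\dots,s_k]\otimes V$ visibly commute, so it suffices to know that the extension convention ($s_i w_j=0$) is preserved. \textbf{Step 4:} Verify $d_t^2 = 0$: on $v\in V$, $d_t^2 v = d^2 v + \sum_i w_i\, s_i(dv) + \sum_i w_i\, d(s_i v) + \sum_{i,j} w_i w_j\, s_i s_j v$, and the middle two cancel by $[d_t,s_i]=0$ (equivalently $[d,s_i]=0$ on $V$) while the last vanishes since $w_i w_j$ is symmetric and $s_i s_j v$ is also symmetric but the $w$'s are even of degree $2$ — wait, one must instead observe $s_i s_j = s_j s_i$ and $w_i w_j = w_j w_i$ so the double sum is $\sum_i w_i^2 s_i^2 v + \sum_{i<j} w_i w_j (s_i s_j + s_j s_i)v$; the needed vanishing must come from the algebra being built on $U_k^{>0}$, where elements like $s_i^2 v$ land or are truncated — this is exactly the subtlety handled in \cite{vigue-burghelea} and I would invoke it rather than reprove it. \textbf{Step 5:} Confirm minimality: the differential has no linear part (all terms $dv$, $w_i s_i v$ are decomposable or of word-length $\ge 2$ in the $w_i$, using that $(S(V),d)$ is minimal), and the generating space is correctly positive-degree truncated, which handles the nilpotent (non-simply-connected) case exactly as in \Cref{free-loop}.

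\textbf{Main obstacle.} The genuine difficulty is not the algebra of the differential — that is forced, given \Cref{CyclicModel}, by functoriality of minimal models under the iterated Borel construction — but rather \emph{Step 1}: justifying that the iterated homotopy quotient $\big(\mc{L}(\mc{L}^{k-1}X)\dslash S^1\big)\dslash T^{k-1}$ really does model $\mc{L}^k X\dslash T^k$ with the \emph{correct} (anti)diagonal $T^k$-action, and that the minimal model of a homotopy quotient by a compact torus can be computed stage-by-stage (i.e.\ that the relevant Eilenberg–Moore / fibration spectral sequences behave, which needs the nilpotency and finite-type hypotheses of \Cref{Prop-nil}). One must be especially careful that the $S^1$ being quotiented at the top matches the last circle factor of $T^k$ and does \emph{not} act diagonally on all loop directions — this is precisely the distinction, emphasized in the introduction, between the toroidification $\mc{T}^k X$ and the constructions of \cite{KY07}, and getting it right is what makes the clean tensor form $S(U^{>0})\otimes\RR[w_1,\dots,w_k]$ with $k$ separate weight generators $w_i$ (rather than a single one) come out. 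Once Step 1 is secure, Steps 2–5 are routine iterations of the arguments already used for \Cref{free-loop}, \Cref{free}, and \Cref{CyclicModel}.
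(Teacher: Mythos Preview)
Your inductive strategy has a genuine gap, and it is precisely the obstacle that forces the paper to take a different route. The decomposition you write down,
\[
\mc{T}^k X \;\simeq\; \big(\mc{T}^1(\mc{L}^{k-1}X)\big)\dslash T^{k-1} \;=\; \big(\mc{L}_c(\mc{L}^{k-1}X)\big)\dslash T^{k-1},
\]
is correct, but it does not give an induction that closes. You then write that ``$M(\mc{L}^{k-1}X)$ by \Cref{free} equals $S(U_{k-1}^{>0})\otimes\RR[w_1,\dots,w_{k-1}]$'' --- this is \emph{not} what \Cref{free} says; \Cref{free} gives $M(\mc{L}^{k-1}X)=S(U_{k-1}^{>0})$ with no $w_i$'s. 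The algebra you have written is $M(\mc{T}^{k-1}X)$, the inductive hypothesis. So either you are implicitly assuming $\mc{T}^k X\simeq \mc{L}_c(\mc{T}^{k-1}X)$, which is \emph{false} (applying \Cref{CyclicModel} to $M(\mc{T}^{k-1}X)$ produces spurious degree-$1$ generators $s_k w_1,\dots,s_k w_{k-1}$; this is exactly the difference between $\mc{T}^k X$ and the iterated cyclification $\mc{L}_c^k X$ emphasized in the Introduction), or you have identified $\mc{T}^k X$ correctly as above but never explain how to compute $M\big((-)\dslash T^{k-1}\big)$. The residual $T^{k-1}$-action on $\mc{L}_c(\mc{L}^{k-1}X)$ is \emph{not} a loop-rotation action of the form to which your inductive hypothesis for $\mc{T}^{k-1}$ applies: it is not true that $\mc{L}^{k-1}(Y)\dslash G \simeq \mc{L}^{k-1}(Y\dslash G)$, so you cannot write $\mc{L}_c(\mc{L}^{k-1}X)$ as $\mc{L}^{k-1}$ of something with the correct $T^{k-1}$-action. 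There is also a sign slip in Step~4: the $s_i$ have degree $-1$, so $s_is_j=-s_js_i$ (in particular $s_i^2=0$), and this antisymmetry against the symmetry $w_iw_j=w_jw_i$ is what makes the double sum vanish.

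The paper avoids the induction altogether. It first establishes an algebraic adjunction (\Cref{dgca-adj} and its truncated form \Cref{dgca-adj-trunc}) between the toroidification functor $\Trd_k$ and the totalization functor $\Tot_k$ on semifree DGCAs over $S(W)$, $W=\bigoplus_i\RR w_i$. This mirrors, on the DGCA side, the topological adjunction $[Y\times_{BT^k}ET^k,X]_0 \cong [Y,\Map^0(T^k,X)\dslash T^k]_{/BT^k}$. Since $\Tot_k$ corresponds under rational homotopy to $Y\mapsto Y\times_{BT^k}ET^k$, uniqueness of adjoints identifies $\Trd_k(M(X))$ with $M(\mc{T}^kX)$ in one stroke, for all $k$ at once. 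This sidesteps exactly the compatibility problem between iterated $S^1$-quotients and free loops that blocks your induction.
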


\begin{defn}[Algebraic toroidification]
\label{trd}
We call the construction
\vspace{-2mm} 
\[
M = (S(V),d) \;\; \rightsquigarrow  \;\; \Trd_k(M) := 
\left(S \left( ( \RR[s_1, \dots , s_k] \otimes V )^{>0}\right) \otimes \RR[w_1, \dots , w_k], d_t\right)
\]

\vspace{-2mm} 
\noindent of Theorem \ref{tor} the \emph{algebraic toroidification}.
\end{defn}

\begin{example}[Reduction to eight dimensions] 
Here are explicit formulas for $X = S^4$ and $k = 3$, where we use $d$ instead of $d_t$ for the differential:
  \begin{align*}
  M(\mc{T}^3 S^4) &= \big(\RR[g_4, g_7, s_i g_4, s_i g_7, s_i s_j g_4,
    s_i s_j g_7, s_i s_j s_k g_4, s_i s_j s_k g_7, w_i], d\big), \quad  
    \scalebox{0.8}{$1 \le i, j, k \le 3$}, 
  \quad  s_j s_i = - s_i s_j, 
  \\ 
  dg_4 &=\sum_{i=1}^3 s_i g_4 \cdot w_i, \qquad \qquad \qquad \qquad dg_7 = -\tfrac{1}{2} g_4^2 +
  \sum_{i=1}^3 s_i g_7 \cdot w_i,
     \qquad   d w_i  = 0,     \qquad 
     \scalebox{0.8}{$1 \le i \le 3$},
  \\ 
  d s_i g_4 &= \sum_{j=1}^3 s_j s_i
  g_4 \cdot w_j,
  \qquad \qquad \qquad \quad ds_i g_7 = s_i g_4 \cdot g_4 + \sum_{j=1}^3 s_j
  s_i g_7 \cdot w_j, 
  \\
  d s_i s_j g_4 &=  s_i s_j s_k g_4 \cdot w_k, \quad  \scalebox{0.8}{$k \ne i, j$}, \qquad \;\; d s_i
  s_j g_7 = -s_i s_j g_4 \cdot g_4 - s_i g_4 \cdot s_j g_4 + s_i s_j
  s_k g_7 \cdot w_k, 
  \quad   \scalebox{0.8}{$k \ne i, j$}
  \\ d s_1 s_2 s_3 g_4 &= 0, \qquad \qquad \qquad \qquad \qquad  d s_1
  s_2 s_3 g_7 = 
  s_1 s_2 s_3 g_4
  \cdot g_4 + \sum_{\substack{i < j\\k \ne i,j}} \sgn \bigl( \begin{smallmatrix}1 & 2 &
    3\\ i& j & k\end{smallmatrix}\bigr)\, s_i s_j g_4 \cdot s_k g_4\,.
  \end{align*}
\end{example}

The proof of \Cref{tor} rests on an algebraic adjunction result, which mimics the topological adjunction
\vspace{-2mm} 
\begin{equation}
\label{adj}
\begin{tikzcd}
\Tot_G \co \operatorname{Ho(Spaces}_{/BG}) \arrow[r,shift left=.5ex]
&
\operatorname{Ho(Spaces)} \noloc \operatorname{Cyc}_G \arrow[l,shift left=.5ex]
\end{tikzcd}
\end{equation}

\vspace{-2mm} 
\noindent of \cite[Thm. 2.44]{BSS} between the homotopy categories of spaces and spaces over the classifying space 
$BG$ for a topological group $G$ modeled on a CW complex. Here the adjoint functors are $\Tot_G (Y) \coeq Y \times_{BG} EG$ and $\operatorname{Cyc}_G (X) \coeq \Map(G, X) \dslash G$. In other words, we have a natural bijection on homotopy classes of maps:
\vspace{-1mm} 
\begin{equation}
\label{adj1}
[Y \times_{BG} EG, X ] \xrightarrow{\quad \sim \quad} [Y, \Map(G, X) \dslash G]_{/BG} \, .
\end{equation}
Since we are interested in the rational homotopy theory of connected spaces, we will need a version of the above adjunction for the path component $\Map^0 (G,X)$ of a constant map $G \to X$, where $X$ path-connected. In other words, $\Map^0 (G,X)$ is the subspace of null-homotopic maps. Let also $[Y \times_{BG} EG, X ]_0$ denote the set of homotopy classes of maps $Y \times_{BG} EG \to X$ whose restriction to the fiber $G$ of $Y \times_{BG} EG \to Y$ is null-homotopic. Then \eqref{adj1} yields a natural bijection
\begin{equation}
\label{adj0}
[Y \times_{BG} EG, X ]_0 \xrightarrow{\quad \sim \quad} [Y, \Map^0(G, X) \dslash G]_{/BG}.
\end{equation}
This bijection localizes further, rationally to a bijection for homotopy classes of maps between rationalizations with rational null-homotopy condition on $G$ on both sides.

We also need to recall standard notions of rational homotopy theory related to fibrations.

\begin{defn}[Relative Sullivan algebras]
\label{relative}
$\,$
\vspace{-2mm} 
\begin{itemize}
\setlength\itemsep{-2pt}
\item[{\bf (i)}]
A \emph{relative Sullivan algebra} is a homomorphism of DGCAs of the form
\vspace{-2mm} 
\begin{align*}
A & \longhookrightarrow A \otimes_\R S(Z)\,,
\\[-2pt]
a & \longmapsto a \otimes 1,
\end{align*}

\vspace{-3mm} 
\noindent where $A = \bigoplus_{n \ge 0} A^n$ is a nonnegatively graded DGCA, $Z  = \bigoplus_{n > 0} Z^n $ is a positively graded real vector space and the differential 
$d$ on $A \otimes  S(Z)$ satisfies the following nilpotence condition, known as the
\emph{Sullivan condition}: $Z$ is the union of an increasing series
  of graded subspaces
  \vspace{-3mm}
\begin{equation*}
  Z(0) \subset Z(1) \subset \dots
\end{equation*}

\vspace{-2mm} 
\noindent such that $d(Z(0)) = 0$ and $d(Z(k)) \subset A \otimes S(Z(k-1))$ for $k \ge 1$.
\item[{\bf (ii)}]
We say that a relative Sullivan algebra is \emph{minimal} if
\vspace{-2mm} 
  \[
  d(Z) \; \subset \; A^+ \otimes S(Z) + A \otimes S^{\ge 2}  (Z),
  \]

  \vspace{-3mm} 
\noindent   where $A^+ \coeq \bigoplus_{n>0} A^n$.
\end{itemize}
\end{defn}

\begin{example}[Relative models]
A minimal relative Sullivan DGCA over $A = \R$
is the same as a \emph{minimal Sullivan 
DGCA}.
The inclusion $\RR[w_1, \dots, w_k] \hookrightarrow \Trd_k(M)$ is an example of a minimal relative Sullivan algebra, provided $M$ is a minimal Sullivan algebra.
If $G$ is a compact connected Lie group,
then the minimal relative Sullivan algebra $(S(W), 0) \to (S(W \oplus sW), d)$ with $W$ generated in positive even degrees, $sW := W[1]$, i.e., $\abs{sw} = \abs{w}-1$, 
and $d(sw) = w$ for $w \in W$ is a relative minimal model of the universal $G$-bundle $EG \to BG$. When $G = T^k$ is the compact $k$-torus, 
$W = \R w_1 \oplus \dots \oplus \R w_k$, $\abs{w_i} = 2$ for $1 \le i \le k$. As a  relative minimal model of an arbitrary principal $G$-bundle 
$X \to B$ associated with a map $f: B \to BG$, one may take the minimal relative Sullivan algebra $N \to N \otimes S(sW)$, where $N$ is a 
minimal model of $B$ and $d(sw) = f^*(w)$ for a minimal model $f^*: S(W) \to N$ of $f: B \to BG$. This minimal relative model is not minimal, 
in general. See details in \cite[\S 2.5.2]{FOT08}. 
\end{example}

\begin{remark}[Minimal vs.\ nonminimal models]
To every rational homotopy type of nilpotent spaces of finite type, one can functorially and 
uniquely (up to isomorphism) associate a minimal Sullivan DGCA, called the Sullivan minimal model of the space. 
A Sullivan algebra is a more general and weaker notion where the differential 
is allowed to have linear terms. 
Since we are dealing with fibrations, it is not unexpected that we would
be
using Sullivan algebras instead of minimal models, as the process of 
starting with a minimal model of the base will generally only lead to a Sullivan algebra for
the total space. See \cite{FOT08}\cite{FH17}. 
However, starting from a Sullivan algebra, one can
construct its corresponding minimal model.
In fact, one can do this systematically \cite{Bousfield-Gugenheim} (see \cite{GGMM} for an algorithm). 
\end{remark}

We will work with the following categories of DGCAs: 
\vspace{-2mm} 
\begin{itemize}[leftmargin=.6cm] 
\setlength\itemsep{-2pt}
\item  $\DGCA^{\sfree}$: DGCAs which are free as graded commutative algebras, also known as \emph{semifree} DGCAs;
\item  $\DGCA_A^{\sfree}$: DGCAs which are free as graded commutative algebras over a given semifree DGCA $A$.
\end{itemize}

\begin{defn}[Algebraic totalization]
\label{tot}
Let $W$ be a positively graded vector space. In the notation above, the functor
\vspace{-4mm} 
\begin{align*}
    \Tot_W \co \DGCA_{S(W)}^{\sfree} & \longrightarrow \DGCA^{\sfree}
    \\[-2pt]
    N  \quad &\longmapsto N \otimes S(sW),
\end{align*}

\vspace{-2mm} 
\noindent where $d(sw) = w$ for $w \in W \hookrightarrow N$, is called \emph{algebraic totalization}.

\end{defn}

Note that the DGCA-homomorphism $N \to \Tot_W (N)$ 
is a minimal relative Sullivan algebra. The functor $\Tot_W$ is an algebraic counterpart of the functor $(f: B \to BG) \mapsto f^*(EG)$ which assigns the total space of the pullback of the
universal $G$-bundle to a space $B$ over $BG$. The total space $f^*(EG)$ may also be identified with the homotopy fiber of the map $f: B \to BG$.

\smallskip 
First, let us look at an algebraic analogue of the adjunction \eqref{adj}, which we will discuss over $\QQ$. This will imply the same adjunction 
over $\RR$. It deals with a simpler version of algebraic toroidification which skips the truncation:
\vspace{-2mm} 
\vspace{-2mm} 
\begin{equation}
\label{tildeTrd}
\begin{tikzcd}
M = (S(V),d)  \; \arrow[rightsquigarrow]{r} & \; \widetilde{\Trd}_k(M) \coeq
\left(S \left(  \QQ[s_1, \dots , s_k] \otimes V \right) \otimes \QQ[w_1, \dots , w_k], d_t\right),
\end{tikzcd}
\end{equation}

\vspace{-2mm} 
\noindent 
where $d_t$ is defined as in \Cref{tor}.


\begin{theorem}[Algebraic toroidification/totalization adjunction]
\label{dgca-adj}
There is an adjunction
\vspace{-3mm} 
\[
\begin{tikzcd}
\widetilde{\Trd}_k \co \DGCA^{\sfree} \arrow[r,shift left=.5ex]
&
\DGCA^{\sfree}_{S(W)} \noloc \Tot_k \arrow[l,shift left=.5ex]
\end{tikzcd}
\]

\vspace{-3mm}
\noindent 
between the algebraic toroidification functor $\widetilde{\Trd}_k$ of $\eqref{tildeTrd}$ and the algebraic totalization 
functor $\Tot_k := \Tot_W$ of Definition $\ref{tot}$ for $W = \QQ w_1 \oplus \dots \oplus \QQ w_k$, $\abs{w_i} = 2$. 
In other words, there is a natural bijection
\vspace{-2mm} 
    \begin{equation}
    \label{bijection1}
\Hom_{\DGCA^{\sfree}} \big(M, \Tot_k( N) \big) \xrightarrow{\;\; \sim
  \;\;} \Hom_{\DGCA^{\sfree}_{S(W)}} \big( \widetilde{\Trd}_k(M), N\big),
\end{equation}

\vspace{-2mm} 
\noindent where $M \in \DGCA^{\sfree}$ and $N \in \DGCA^{\sfree}_{S(W)}$. This bijection induces a bijection on the homotopy classes of maps.
\end{theorem}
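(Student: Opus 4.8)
The plan is to establish the natural bijection \eqref{bijection1} by writing down the unit and counit of the adjunction explicitly and checking the triangle identities, exploiting the fact that all DGCAs involved are semifree. Concretely, a DGCA-homomorphism $\varphi: M \to \Tot_k(N) = N \otimes S(sW)$ is, since $M = S(V)$ is free, the same as a degree-preserving linear map $V \to N \otimes S(sW)$ commuting with differentials. Because $N$ carries the generators $w_i \in W$ and $d(sw_i) = w_i$ in $\Tot_k(N)$, I can decompose $\varphi(v)$ according to powers of the $sw_i$. The key observation is that $N \otimes S(sW)$ is, as a bigraded object, $N \otimes S(sw_1,\dots,sw_k)$, and the compatibility with $d$ will force the coefficients of the monomials in the $sw_i$ to be governed exactly by iterated applications of the operators $s_i$ appearing in $\widetilde{\Trd}_k(M)$.

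First I would make the correspondence precise at the level of generators: given $\varphi: M \to \Tot_k(N)$, define $\psi: \widetilde{\Trd}_k(M) \to N$ on the generators $s_{i_1}\cdots s_{i_p} v$ (for $v \in V$) and $w_j$ by extracting, from $\varphi(v) \in N \otimes S(sW)$, the coefficient of the monomial $sw_{i_1}\cdots sw_{i_p}$ (up to sign/combinatorial normalization), and by sending $w_j \mapsto w_j \in W \subset N$. Conversely, given $\psi: \widetilde{\Trd}_k(M) \to N$ in $\DGCA^{\sfree}_{S(W)}$ (so $\psi$ fixes the $w_j$), define $\varphi(v) := \sum_{p \ge 0}\sum_{i_1 < \dots < i_p} \psi(s_{i_1}\cdots s_{i_p} v)\, sw_{i_1}\cdots sw_{i_p} \in N \otimes S(sW)$. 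The bulk of the verification is then: (a) these assignments extend to well-defined DGCA maps (using freeness on the source side), (b) they respect the differentials — this is where the defining relation $d_t v = dv + \sum_i w_i \cdot s_i v$ from \Cref{tor} must match, term by term, the identity $d(\varphi(v)) = \varphi(dv)$ after expanding $d$ on $N \otimes S(sW)$ and using $d(sw_i) = w_i$ and $[d,s_i]=0$ — and (c) the two assignments are mutually inverse and natural in $M$ and $N$.

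**The main obstacle** I expect is step (b), the differential-compatibility bookkeeping: one has to check that the "internal" differential of $N$, the $d(sw_i) = w_i$ term, and the sign conventions from the degree-$(-1)$ derivations $s_i$ all conspire so that $d \circ \varphi = \varphi \circ d$ holds if and only if $\psi$ commutes with $d_t$. The subtlety is that $\varphi(dv)$ must be expanded using the iterated relation $d_t(s_{i_1}\cdots s_{i_p} v) = s_{i_1}\cdots s_{i_p}(dv) + \sum_{j \notin \{i_1,\dots,i_p\}} \pm\, w_j \cdot s_j s_{i_1}\cdots s_{i_p} v$, and one must confirm that collecting the coefficient of a fixed monomial $sw_{i_1}\cdots sw_{i_p}$ on both sides of $d\varphi(v) = \varphi(dv)$ reproduces exactly this relation — the $w_j \cdot s_j(\cdots)$ terms arising precisely from applying the internal differential to the factor $sw_j$. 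The anticommutativity $s_j s_i = -s_i s_j$ matches the graded-commutativity (sign $(-1)$) of the odd elements $sw_i$, so the combinatorics should close, but it requires care. Having done this on generators, naturality and the inverse property are formal.

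Finally, for the last sentence, the induced bijection on homotopy classes follows from the standard fact that $\widetilde{\Trd}_k$ and $\Tot_k$ preserve the relevant path objects / cylinder objects (the relative Sullivan algebra $N \to \Tot_k(N)$ is minimal, hence a cofibration, and both functors are left/right adjoints hence preserve the appropriate class of weak equivalences between semifree objects). I would invoke that an adjunction between model-like categories descends to homotopy categories once one checks the functors are respectively left and right Quillen, or more elementarily, that the adjunction isomorphism \eqref{bijection1} is compatible with the standard cylinder object $M \otimes (\R \oplus \R t \oplus \R\,dt)$ on the semifree source, which it is by inspection of the explicit formulas above.
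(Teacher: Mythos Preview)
Your proposal is correct and takes essentially the same approach as the paper: both construct the bijection by matching the coefficient of $sw_{i_1}\cdots sw_{i_p}$ in $\varphi(v)\in N\otimes S(sW)$ with the image $\psi(s_{i_1}\cdots s_{i_p}v)\in N$, and both identify the differential-compatibility check as the only nontrivial step. The paper packages this slightly more conceptually---writing $\Tot_k(N)=\iHom_\QQ(\QQ[s_1,\dots,s_k],N)$ via the perfect pairing $(s_i,sw_j)=\delta_{ij}$ and invoking the tensor--internal-Hom adjunction---and for the homotopy statement simply reapplies the bijection with $N$ replaced by $N\otimes\Omega(1)$, which is exactly your cylinder-object remark; your Quillen-adjunction alternative would work but is more than is needed.
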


\begin{proof}

Let us construct the required bijection. Consider the algebra $\QQ[s_1, \dots, s_k]$ as in \Cref{trd} and define a pairing
\vspace{-2mm} 
    \[
    (\QQ s_1 \oplus \dots \oplus \QQ s_k) \otimes (\QQ s w_1 \oplus \dots \oplus \QQ s w_k) \longrightarrow \QQ
    \]
by $(s_i, sw_j) = \delta_{ij}$. This naturally extends to a perfect pairing
\vspace{-2mm} 
\[
(-,-):    \QQ[s_1, \dots, s_k] \otimes \QQ[s w_1, \dots, s w_k] \longrightarrow \QQ.
\]

\vspace{-2mm} 
\noindent
We can then identify the totalization with an internal Hom in the category of graded vector spaces:
\vspace{-2mm} 
\[
\Tot_k( N) = N \otimes_\QQ \QQ[s w_1, \dots, s w_k] = \iHom_\QQ  \big(\QQ[s_1, \dots, s_k], N\big),
\]

\vspace{-2mm} 
\noindent 
where the graded algebra structure on $\iHom_\QQ  (\QQ[s_1, \dots, s_k], N)$ is given by the standard (shuffle) coalgebra structure
on $\QQ[s_1, \dots, s_k]$ and convolution product on $\iHom$, while the differential is given on $h \in \iHom (\QQ[s_1, \dots, s_k], N)$ by
\vspace{-3mm} 
\begin{equation}
\label{dh}
(d h) \big(p(s_1, \dots, s_k)\big) = d h \big(p(s_1, \dots, s_k)\big) - (-1)^{\abs{h} + \abs{p}}
\sum_{i=1}^k w_i \cdot h \big( p(s_1, \dots, s_k)s_i \big).
\end{equation}

\vspace{-2mm} 
\noindent
Now, given a DGCA-homomorphism 
$f \co M \to \Tot_k( N)$, restrict it to the space $V$ of generators of $M = S(V)$ to get a linear map
\vspace{-3mm} 
\[
f = f\vert_V: V \longrightarrow \iHom  (\QQ[s_1, \dots, s_k], N)
\]

\newpage 
\noindent and use the tensor product-internal Hom adjunction to get a homogeneous linear map
\vspace{-2mm} 
\setlength{\jot}{1pt}
\begin{align*}
    F \co \QQ[s_1, \dots, s_k] \otimes V   & \;\; \longrightarrow \quad N
\\
p(s_1, \dots, s_k) v & \;\; \longmapsto
(-1)^{\abs{p} \cdot \abs{v}}
f (v) (p(s_1, \dots, s_k)),
\end{align*}
which we extend to a unique graded $S(W)$-algebra homomorphism
\vspace{-2mm} 
\[
F \co \widetilde{\Trd}_k(M) = S(\QQ[s_1, \dots, s_k] \otimes V) \otimes S(W)  \longrightarrow  N.
\]

\vspace{-2mm} 
\noindent
To see that this homomorphism respects the differentials, first note that $F\vert_M = f$. Therefore,
\vspace{-2mm} 
\[
F (dv) = f(dv) = (df) (v).
\]

\vspace{-2mm} 
\noindent
Also, recall that in $\Trd_k(M)$ we have $d_t w_i = 0$ for all $i$ and
\vspace{-4mm} 
\[
d_t p(s_1, \dots, s_k) v = (-1)^{\abs{p}} p(s_1, \dots, s_k) \bigg( dv + \sum_{i=1}^k w_i \cdot s_i v \bigg)
\]

\vspace{-2mm} 
\noindent
for all $p \in \QQ[s_1, \dots, s_k]$ and $v \in V$.
 
Now, we check
\vspace{-4mm} 
\setlength{\jot}{1pt}
\begin{align*}
F ( d_t p(s_1, \dots, s_k) v) & = (-1)^{\abs{p}} F (p(s_1, \dots, s_k) d v) + (-1)^{\abs{p}} \sum_{i=1}^k w_i \cdot F (p(s_1, \dots, s_k) s_i v) \\
& = (-1)^{ \abs{p} \cdot \abs{v}} f(dv) (p(s_1, \dots, s_k)) - (-1)^{ (\abs{p}+1)(\abs{v}+1)} \sum_{i=1}^k w_i \cdot  f(v) (p(s_1, \dots, s_k) s_i )\\
& = (-1)^{ \abs{p} \cdot \abs{v}}  (d f)(v) (p(s_1, \dots, s_k)) - (-1)^{ (\abs{p}+1)(\abs{v}+1)} \sum_{i=1}^k w_i \cdot  f(v) (p(s_1, \dots, s_k) s_i)\\
& = (-1)^{ \abs{p} \cdot \abs{v}} d \big(f(v) (p(s_1, \dots, s_k))\big) = d \big(F (p(s_1, \dots, s_k) v)\big),
\end{align*}
where we used \eqref{dh} to move to the last line.


Let us construct the inverse of \eqref{bijection1}. Given a homomorphism $F \co \widetilde{\Trd}_k(M) \to N$ of DGCAs over $S(W)$, restrict it to 
a linear map
\[
\QQ[s_1, \dots, s_k] \otimes  V \longrightarrow N
\]
on the generating space and use the tensor product-internal Hom adjunction to get a homogeneous linear map
\vspace{-2mm} 
\[
V \longrightarrow \iHom  (\QQ[s_1, \dots, s_k], N) = \Tot_k (N),
\]

\vspace{-2mm} 
\noindent 
which extends to a unique graded-algebra homomorphism
\vspace{-2mm} 
\[
f \co M = S(V) \longrightarrow \Tot_k (N).
\]

\vspace{-2mm} 
\noindent 
Let us check that it is a DGCA homomorphism:
\setlength{\jot}{-1pt}
\begin{align*}
f(dv) (p(s_1, \dots, s_k)) & = (-1)^{\abs{p}(\abs{v}+1)} F \big(p(s_1, \dots, s_k) dv \big)\\
& = (-1)^{\abs{p}(\abs{v}+1)} F \left(p(s_1, \dots, s_k) d_t v \right)  - (-1)^{\abs{p}(\abs{v}+1)} \sum_{i=1}^k F \big( w_i \cdot p(s_1, \dots, s_k) s_i v \big)\\
& = (-1)^{\abs{p} \cdot \abs{v}} d F \left(p(s_1, \dots, s_k) v \right) 
- (-1)^{\abs{p} (\abs{v}+1)} \sum_{i=1}^k w_i F\big( p(s_1, \dots, s_k)s_i v\big)\\
& = d \big(f (v) (p(s_1, \dots, s_k))\big) - (-1)^{\abs{p} + \abs{v}} \sum_{i=1}^k w_i f(v) ( p(s_1, \dots, s_k)s_i) \\
& =  (df(v)) \big(p(s_1, \dots, s_k)\big),
\end{align*}

\vspace{-2mm} 
\noindent 
where we used \eqref{dh} in the last step.

It is straightforward to see that $F \mapsto f$ is an inverse of $f \mapsto F$. The naturality also directly follows from the naturality of the constructions involved.

The statement about homotopy classes of maps comes from the observation that we can use the same constructions to go between homotopies 
$h: M \to \Tot_k(N) \otimes \Omega(1)$ and  homotopies $H: \widetilde{\Trd}_k(M) \to N \otimes \Omega (1)$.
\end{proof}

This theorem implies the following algebraic analogue of the ``connective adjunction'' \eqref{adj0}.

\begin{cor}
[Algebraic toroidification/totalization adjunction: truncated version]
\label{dgca-adj-trunc}
$\,$


\noindent {\bf (i)} There is a natural bijection
\vspace{-2mm} 
    \begin{equation}
    \label{bijection2}
\Hom^0_{\DGCA^{\sfree}} \big(M, \Tot_k( N) \big) \xrightarrow{\;\; \sim
  \;\;} \Hom_{\DGCA_{S(W)}^{\sfree}} \big( \Trd_k(M), N\big),
\end{equation}

\vspace{-2mm} 
\noindent where $M \in \DGCA^{\sfree}$ and $N \in \DGCA_{S(W)}^{\sfree}$ and 
\vspace{-2mm} 
\[
\Hom^0_{\DGCA^{\sfree}} \big(M, \Tot_k( N) \big) \coeq \big\{f \in \Hom_{\DGCA^{\sfree}} \big(M, \Tot_k( N) \big) \mid  f(M^+) \subset N^+ \otimes S(sW) \big\} .
\]

\vspace{-2mm} 
\noindent
 {\bf (ii)} This bijection induces a bijection on the homotopy classes of maps, where a homotopy on $\Hom^0$ is a DGCA homomorpshim 
$h \co M \to \Tot_k(N) \otimes \Omega(1)$ such that
\vspace{-2mm} 
\begin{equation}
\label{homotopy}
h(M^+) \subset (N \otimes \Omega(1))^+  \otimes S(sW).
\end{equation}
\end{cor}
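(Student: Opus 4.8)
The plan is to derive Corollary~\ref{dgca-adj-trunc} from Theorem~\ref{dgca-adj} by restricting the bijection \eqref{bijection1} to the appropriate sub-collections of homomorphisms on each side. First I would observe that $\Trd_k(M)$ is obtained from $\widetilde{\Trd}_k(M)$ by discarding exactly the degree-zero part of $\RR[s_1,\dots,s_k]\otimes V$, i.e.\ $\widetilde{\Trd}_k(M) = S(\QQ[s_1,\dots,s_k]\otimes V)\otimes S(W)$ contains $\Trd_k(M) = S((\QQ[s_1,\dots,s_k]\otimes V)^{>0})\otimes S(W)$ as a sub-DGCA, and the complement is generated by the top-degree monomials $s_1\cdots s_k v$ for $v \in V^1$ (these sit in degree $1-k$, which is $\le 0$ precisely when... actually in general the degree-zero generators of $\QQ[s_1,\dots,s_k]\otimes V$ are the $s_{i_1}\cdots s_{i_p}v$ with $|v| = p$). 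So a homomorphism $F \co \widetilde{\Trd}_k(M) \to N$ lands in the image of the restriction-to-$\Trd_k(M)$ map exactly when it annihilates those non-positive-degree generators, which is automatic when $N$ is concentrated in nonnegative degrees — and since $N \in \DGCA^{\sfree}_{S(W)}$ is a relative Sullivan algebra over $S(W)$ with $W$ in positive degree, $N$ is indeed nonnegatively graded. Hence $\Hom_{\DGCA^{\sfree}_{S(W)}}(\widetilde{\Trd}_k(M),N) = \Hom_{\DGCA^{\sfree}_{S(W)}}(\Trd_k(M),N)$ automatically.

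Next I would chase through the bijection of Theorem~\ref{dgca-adj} to see which subset of $\Hom_{\DGCA^{\sfree}}(M,\Tot_k(N))$ corresponds to this. Under the correspondence, a homomorphism $f \co M \to \Tot_k(N) = \iHom_\QQ(\QQ[s_1,\dots,s_k],N)$ restricted to $V$ gives, via the tensor-hom adjunction, the map $F$ on $\QQ[s_1,\dots,s_k]\otimes V$; requiring $F$ to vanish on the degree-$0$ part $(\QQ[s_1,\dots,s_k]\otimes V)^{\le 0}$ translates, by unwinding $F(p\,v) = \pm f(v)(p)$, into the condition that for each $v\in V$ the function $f(v)\in\iHom(\QQ[s_1,\dots,s_k],N)$ sends every monomial $p$ with $|p| + |v| \le 0$ into... wait, more precisely into $N$ in negative or zero degree. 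Since $N^{<0}=0$, this is the requirement $f(v)(p) \in N^{>0}$ whenever $\deg(p\,v) = 0$, equivalently $f(v)(p)$ has no degree-$0$ component for $|p|=-|v|$, i.e.\ the part of $f(v)$ of internal degree $0$ vanishes. Tracking that $M^+$ is generated by $V^{>0}$ together with decomposables, and that the degree-$0$ component of $\Tot_k(N) = N\otimes S(sW)$ is $N^0 \otimes \QQ \oplus (\text{pieces involving }sw_i)$ — but $|sw_i| = 1 > 0$ so $\Tot_k(N)^0 = N^0$ — I would show that the condition ``$f(M^+)\subset N^+\otimes S(sW)$'' is exactly the condition that $f$ kills the degree-$0$ component, matching the $\Hom^0$ defined in the statement. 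So part~{\bf (i)} follows by restricting \eqref{bijection1}.

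For part~{\bf (ii)}, I would run the identical argument one notch up, replacing $N$ by $N\otimes\Omega(1)$ and $\Tot_k(N)$ by $\Tot_k(N)\otimes\Omega(1) = \Tot_k(N\otimes\Omega(1))$ (using that $\Tot_k = -\otimes S(sW)$ commutes with $-\otimes\Omega(1)$, and that $\Omega(1)$ is concentrated in degrees $0$ and $1$ so $N\otimes\Omega(1)$ is still nonnegatively graded). The homotopy condition \eqref{homotopy} is precisely the $\Hom^0$-condition for maps into $N\otimes\Omega(1)$, and the last paragraph of the proof of Theorem~\ref{dgca-adj} already says the correspondence intertwines homotopies $h \co M \to \Tot_k(N)\otimes\Omega(1)$ with homotopies $H \co \widetilde{\Trd}_k(M)\to N\otimes\Omega(1)$; restricting to the sub-collections satisfying the degree-$0$ vanishing gives the induced bijection on homotopy classes.

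The main obstacle I anticipate is bookkeeping the degree conventions so that the three conditions really line up: (a) $F$ annihilates $(\QQ[s_1,\dots,s_k]\otimes V)^{\le 0}$, (b) the nonnegativity of $N$ makes this automatic, and (c) the dual condition on $f$ is exactly $f(M^+)\subset N^+\otimes S(sW)$. In particular one must be careful that $M^+$ is generated not only by positive-degree generators but also by products, and check that the totalization differential \eqref{dh} and the extension of $F$ to decomposables preserve these sub-collections — but since all the maps in question are DGCA homomorphisms and the sub-collections are cut out by the differential-graded-ideal $N^+\otimes S(sW)\subset\Tot_k(N)$ (respectively the corresponding ideal in $\widetilde{\Trd}_k(M)$ generated by non-positive-degree generators), this is a formal consequence of the homomorphism property rather than a real computation. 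Everything else is a direct restriction of the already-established bijection.
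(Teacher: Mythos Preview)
Your overall strategy---restrict the bijection \eqref{bijection1} of Theorem~\ref{dgca-adj} and show that the truncation on the $\Trd_k$ side matches the $\Hom^0$ condition on the $\Tot_k$ side---is exactly the paper's approach, and your treatment of homotopies in the third paragraph (replace $N$ by $N\otimes\Omega(1)$) is fine.

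However, your first paragraph contains a genuine error that undermines the argument. You claim that because $N$ is nonnegatively graded, every $F\co\widetilde{\Trd}_k(M)\to N$ automatically annihilates the non-positive-degree generators, whence $\Hom_{\DGCA^{\sfree}_{S(W)}}(\widetilde{\Trd}_k(M),N)=\Hom_{\DGCA^{\sfree}_{S(W)}}(\Trd_k(M),N)$. This is false for degree-\emph{zero} generators: an element $u\in U^0$ may be sent to any element of $N^0$, which contains at least the ground field. If your claim were correct, the $\Hom^0$ condition would be vacuous and the corollary would add nothing to Theorem~\ref{dgca-adj}. Your second paragraph implicitly contradicts the first by trying to identify a \emph{nontrivial} subset of $\Hom(M,\Tot_k(N))$ corresponding to maps out of $\Trd_k(M)$.

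There is a second confusion in the first paragraph: $\Trd_k(M)$ is not a sub-DGCA of $\widetilde{\Trd}_k(M)$ (the differentials disagree on $U^1$, since $s_i$ is truncated in the former), so ``restriction to $\Trd_k(M)$'' is not the right picture. Rather, $\Trd_k(M)$ is the \emph{quotient} of $\widetilde{\Trd}_k(M)$ by the differential ideal generated by $U^{\le 0}$, and $\Hom_{S(W)}(\Trd_k(M),N)$ embeds in $\Hom_{S(W)}(\widetilde{\Trd}_k(M),N)$ by precomposition with the quotient map; its image is exactly those $F$ with $F\vert_{U^{\le 0}}=0$. Once you make this correction, your second paragraph is on the right track: under the tensor--hom dictionary $F(p\,v)=\pm f(v)(p)$, the condition $F\vert_{U^0}=0$ translates to the vanishing of the $N^0$-component of $f(v)$ for each $v\in V$, which (since $N^{<0}=0$ forces the remaining components automatically) is precisely $f(V)\subset N^+\otimes S(sW)$, and hence $f(M^+)\subset N^+\otimes S(sW)$ by multiplicativity. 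That is the content of the paper's one-line proof.
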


\begin{proof}
In the construction of \Cref{dgca-adj}, the truncation $(\QQ[s_1, \dots, s_k] \otimes  V)^{>0} $ in $\Trd_k(M)$ for $M= S(V)$ on the right-hand side of \eqref{bijection2}, see \Cref{trd}, corresponds exactly to the truncation in the definition of $\Hom^0$ in \Cref{dgca-adj-trunc}. The same happens at the level of homotopies, as one can just apply the bijection \eqref{bijection2} to $N \otimes \Omega(1)$, in lieu of $N$.
\end{proof}

\smallskip 
\begin{proof}[Proof of \Cref{tor}]
First, note that if a DGCA $M$ is minimal Sullivan, then so is $\Trd_k(M)$.

Even though \Cref{dgca-adj-trunc} is not quite an adjunction, with $\Hom^0$ not being the $\Hom$ set of a category, 
there is still a unit of ``adjunction''
\vspace{-5mm}
\[
\id_{\DGCA^{\sfree}} \longrightarrow \Tot_k \circ \Trd_k\,,
\]

\vspace{-2mm} 
\noindent which implies the uniqueness of the toroidification functor $\Trd_k$ up to natural isomorphism. 
Thus, the algebra $\Trd_k(M)$ is determined up to isomorphism, unique up to homotopy satisfying the condition \eqref{homotopy}, 
which must be an actual isomorphism for $\Trd_k(M)$ being a minimal Sullivan algebra.

On the other hand, rational homotopy theory produces a (contravariant) equivalence of the (rational) homotopy
categories of spaces and DGCAs 
and the functor $Y \mapsto Y \times_{BT^k} ET^k$ corresponds to the functor $N \mapsto \Tot_k (N)$ at the 
level of DGCA's, see for example \cite[Ex. 2.72]{FOT08}. Moreover, the bifunctor $[Y \times_{BG} EG, X ]_0$ corresponds to the bifunctor $\Hom^0_{\DGCA^{\sfree}} \big(M, \Tot_k( N) \big)$. Indeed, the composition $G \to Y \times_{BG} EG \xrightarrow{\varphi} X$ is rationally null-homotopic iff the composition $M \xrightarrow{\varphi^*} N \otimes S(sW) \xrightarrow{\;/N^+ \otimes S(sW)\;} S(sW)$ maps $M^+$ to 0, which is equivalent to the condition $\varphi^*(M^+) \subset N^+ \otimes S(sW)$, defining $\Hom^0$. 
Thus, the adjunction \eqref{adj0} for $G = T^k$ implies 
that the left adjoint $\Trd_k(M)$ 
is the Sullivan minimal model of the right adjoint $\mc{T}^k X = \Map^0(T^k, X) \dslash T^k$.
\end{proof}

\section{Maximal parabolic subalgebras of type \texorpdfstring{$E_k$}{} Lie algebras}
\label{sec-para} 

This section provides a description of the maximal (proper) parabolic subalgebras of simple (semisimple for $k = 3$ and Kac-Moody for $k \ge 9$) Lie algebras $\g$ of type $E_{k(k)}$
and serves as 
preparation for the next section where we will have 
an action of those algebras on the toroidification model. 
The maximally noncompact, or ~{\it split}, real forms
$\mathfrak{g}\equiv \mathfrak{g}^\CC(\RR)$ behave closely to 
the underlying complex forms $\mathfrak{g}^\CC$ \cite{Warner} \cite{OV}. 
First, one can use the same basis as for the
corresponding complex (semi)simple (or Kac-Moody) Lie algebra $\mathfrak{g}^\CC$, but over $\RR$.
Second, the Satake diagram, which one needs for real forms, coincides with the Dynkin
diagram in the case of split forms. 
Parabolic subalgebras for split real forms of semisimple Lie algebras are described 
generally in \cite{OV}\cite{FH04}\cite{TY05} and explicitly for 
the exceptional Lie algebras
in \cite{Dobrev08}\cite{DM20}. 

\smallskip 
We start with generalities, applicable in the finite-dimensional case, then move on to the particulars of $E_{k(k)}$, $3\le k \le 8$, and later in this section, discuss a generalization to the Kac-Moody setting $k \ge 9$. In the case $k\le 2$, 
the setup is somewhat different; see the end of this section.

\paragraph{Root decomposition of the split real form of a semisimple Lie algebra.} Choosing a \emph{Cartan subalgebra} $\mathfrak{h}\subset\mathfrak{g}$, that is, a maximal abelian subalgebra of 
semisimple elements, gives the root space decomposition of $\mathfrak{g}$ into weight spaces of $\mathfrak{h}$:
\vspace{-4mm}
\begin{equation}
\label{root-decomp}
\mathfrak{g}=\mathfrak{h}\oplus \bigoplus_{\alpha\in\Delta} \mathfrak{g}_\alpha\,,
\end{equation}

\vspace{-2mm} 
\noindent where the root space $\mathfrak{g}_\alpha$ for an 
eigenvalue $\alpha \co \mathfrak{h}\to\RR$ is
the one-dimensional subspace
\vspace{-2mm} 
\(\
\label{g-alpha}
\mathfrak{g}_\alpha = 
\big\{ x\in\mathfrak{g}\; |\; [\,h\,,\,x\,] = \alpha(h) x\,
\;\;\textrm{for all} \;\; h\in\mathfrak{h}\big\}.
\)

\vspace{-2mm}
\noindent Here $\Delta$ is the set of \emph{roots}, i.e.,  the set of $\alpha\neq 0$ for which $\mathfrak{g}_\alpha\neq\{0\}$.  
In the set of roots $\Delta\subset\mathfrak{h}^*$, we choose a system of \emph{simple roots}
\vspace{-2mm} 
\(
\label{Pi}
\Pi=\{\alpha_1,\ldots,\alpha_k\},
\)
where $k=\dim \mathfrak{h}=\rank \mathfrak{g}$. Then any root $\alpha\in\Delta$ can be written as a nontrivial integral linear combination of the simple roots
$
\alpha =\sum_{i=1}^k m_i\alpha_i
$.
Here either all $m_i\geq0$, in which case $\alpha$ is called a \emph{positive root} ($\alpha>0$), or all $m_i\leq0$, in which case $\alpha$ is called
a \emph{negative root} ($\alpha<0$). The set of positive/negative roots is denoted by $\Delta^{\pm}$, and these satisfy $\Delta^-=-\Delta^+$. 
The  (upper/lower) \emph{Borel subalgebra} is
\vspace{-2mm} 
\begin{align}
\label{Borel}
\mathfrak{b}^\pm = \mathfrak{h}\oplus\mathfrak{n}^\pm
= \mathfrak{h} \oplus \bigoplus_{\alpha\in\Delta^\pm} \mathfrak{g}_\alpha\;.
\end{align}

\vspace{-2mm} 
\noindent  Generally, the nilpotent subalgebras 
 $\mathfrak{n}^\pm$ of $\mathfrak{g}$ can be thought of as strictly upper/lower triangular matrices, while $\mathfrak{h}$ would constitute diagonal matrices.

\medskip 
\paragraph{Maximal parabolics of the split real form of a semisimple Lie algebra.} A \emph{parabolic subalgebra} is a Lie subalgebra of $\g$ containing a Borel subalgebra, let us say, the upper one. A parabolic subalgebra can be described by choosing a subset $\Sigma$ of the set of simple roots \eqref{Pi}. The subset $\Sigma$ generates the set $\langle \Sigma \rangle$ of all roots which can be written as integral linear combinations of the roots in $\Sigma$.
The \emph{maximal} (proper) parabolic subalgebras are characterized by choosing  $\Sigma$ to be the set of all simple roots of $\mathfrak{g}$ but one: 
  $\Sigma=\Pi\setminus\{\alpha_{i}\}$. Thus, a maximal parabolic subalgebra corresponds to the choice of a node in the Dynkin diagram.
We have the decomposition into root subspaces:
\vspace{-2mm} 
\begin{equation}
    \mathfrak{ p }= \mathfrak{ h} \oplus \;\; \bigoplus_{\mathclap{\alpha \in \Delta(\mathfrak{ p)}}} \; \mathfrak{g}_\alpha\, , \qquad \text{ where }  
    \qquad \Delta(\mathfrak{ p}) = \Delta^+ \cup \langle \Sigma \rangle \, .
\end{equation}

\vspace{-2mm}
\noindent The parabolic subalgebra can also be decomposed into the direct
%
sum of a \emph{reductive
subalgebra} $\mathfrak{ l}$, called a \emph{Levi factor}, and 
the \emph{nilradical} $\mathfrak{ n}$ of $\p$:
\vspace{-2mm}
\begin{equation}
    \mathfrak{ p }= \mathfrak{ l} \oplus \mathfrak{ n} \coeq
    \bigg( \mathfrak{ h} \oplus \; \bigoplus_{\mathclap{\alpha \in \langle \Sigma \rangle}} \; \mathfrak{ g}_\alpha\bigg) \;
    \oplus \;
  \bigg( \quad \;  \bigoplus_{\mathclap{\alpha \in \Delta^+ \setminus \langle \Sigma \rangle}} \;\; \mathfrak{ g}_\alpha\bigg)
    \;.
\end{equation}

\vspace{-2mm} 
\noindent The Levi factor, which has the same rank as $\mathfrak{g}$, can further be  decomposed into
\vspace{-2mm}
\begin{equation}
\label{LanglandsLA}
\mathfrak{ l} = \mathfrak{ m} \oplus \mathfrak{ a}
\end{equation}

\vspace{-2mm} 
\noindent with 
\vspace{-2mm} 
\begin{itemize} 
\item $\mathfrak{a} \subset \mathfrak{ h}$ abelian, given by 
    $\mathfrak{ a} = \big\{h \in \mathfrak{h} \mid \alpha(h) = 0 \; \textrm{ for all } \alpha \in \Sigma \big\} 
        $;

\item  $\mathfrak{m} = [\, \mathfrak{l} \,, \, \mathfrak{l} \,]$ semisimple,
given by 
$  
\mathfrak{ m }= \mathfrak{ a}^\perp \oplus \bigoplus_{{\alpha \in \langle \Sigma \rangle}} \mathfrak{ g}_\alpha 
$,
where the orthogonal complement $\mathfrak{ a}^\perp$ is taken within $\mathfrak{h}$ with respect to the 
Killing
form $(- , -)$.

\end{itemize} 
 This leads to the \emph{Langlands decomposition} \cite{Knapp.1997}\cite{Knapp.2023} of the parabolic subalgebra
 \vspace{-2mm} 
\begin{equation}
\label{Langlands}
    \mathfrak{ p }= \mathfrak{ m} \oplus \mathfrak{ a} \oplus \mathfrak{n }\,,
\end{equation}

\vspace{-2mm} 
\noindent refining the \emph{Levi decomposition}, in which $\mathfr{m}$ is a \emph{Levi subalgebra} and $\mathfrak{ a} \oplus \mathfrak{n }$ is the \emph{radical} of $\p$.

\paragraph{Maximal parabolics of split real forms $\mathfr{e}_{k(k)}$ for $3 \le k \le 8$.}
The maximal parabolic subalgebras $\p_k^{k(k)}$ of $\mathfr{e}_{k(k)}$ we will use correspond  
  to the choice of the exceptional node $i=k$ in the Dynkin diagram 
  \eqref{Dynkin-schem} or \eqref{Dynkin-schem-3}, so that $\Sigma=\{\alpha_1, \cdots, \alpha_{k-1}\}$. In this case, 
 \vspace{-2mm} 
\begin{align*}
\mathfrak{a} = \mathfrak{so}(1,1)
 = \mathfrak{gl}(1,\R) \, ,
\end{align*}

\vspace{-2mm} 
\noindent 
and
\vspace{-1mm} 
\[
\mathfr{m} = \mathfr{sl}(k,\R),
\]
being the semisimple Lie algebra whose Dynkin diagram is obtained by removing the $k$th node from the Dynkin diagram of $\mathfrak{g}$ and leaving what is known in the physical context as
the ``gravity line'':
\vspace{-1mm} 
\[
\hspace{-1cm} 
\qquad
  \scalebox{.9}{$
  \raisebox{-30pt}{\begin{tikzpicture}[scale=.6]
     \foreach \x in {0,...,5}
    \draw[thick,xshift=\x cm] (\x cm,0) circle (2.5 mm);
    \foreach \y in {0, 1,2, 4}
    \draw[thick,xshift=\y cm] (\y cm,0) ++(.25 cm, 0) -- +(14.5 mm,0);
    \foreach \y in {3,4}
    \draw[loosely dotted, thick,xshift=\y cm] (\y cm,0) ++(.3 cm, 0) -- +(14 mm,0);
    \draw[dotted, thick] (4 cm, -2 cm) circle (2.5 mm);
    \draw[dotted, thick] (4 cm, -3mm) -- +(0, -1.45 cm);
    \node at (0,.7) {$\alpha_1$};
    \node at (2,.7) {$\alpha_2$};
    \node at (4,.7) {$\alpha_3$};
    \node at (6,.7) {$\alpha_4$};
    \node at (8,.7) {$\alpha_{k-2}$};
    \node at (10,.7) {$\alpha_{k-1}$};
               \node at (5,-2) {{\color{darkblue} $\alpha_k\,$}.};
     \node at (-2.2,0) {
     \bf $\mathfrak{sl}(k, \RR) \co$};
  \end{tikzpicture}
  }
  $}
\]

\vspace{-4mm}
\paragraph{Split real forms $\mathfr{e}_{k(k)}$ of Kac-Moody algebras for $k \ge 3$.}
The structure theory for parabolic subalgebras of Kac-Moody algebras $\mathfr{e}_{k(k)}$ for $k \ge 9$ is not well explored, but may be developed very much like in the finite-dimensional case, $3 \le k \le 8$, though the setup is somewhat different. The choice of a Cartan subalgebra $\h$, simple roots, and a (generalized) Cartan matrix is part of the Kac-Moody algebra structure, see \cite[Chapter 1]{Kac}. 
For $k \ge 3$, the \emph{generalized Cartan matrix of type $E_k$}
\begin{equation}
\label{Cartan-matrix}
C =
{\footnotesize
\begin{bmatrix}
  2 & -1 & 0 & 0 & 0 & \dots & 0 & 0 & 0\\
  -1 & 2 & -1 & 0 & 0 & \dots & 0 & 0 & 0\\
0 & -1 & 2 & -1 & 0 & \dots & 0 & 0 & -1\\
0 & 0 & -1 & 2 & -1 & \dots & 0& 0 & 0\\
0 & 0 & 0 & -1 & 2 & \dots & 0 & 0 & 0\\
& & & & & \ddots\\
0 & 0 & 0 & 0 & 0 & \dots & 2 & -1 & 0\\
0 & 0 & 0 & 0 & 0 & \dots & -1 & 2 & 0\\
0 & 0 & -1 & 0 & 0 & \dots & 0 & 0 & 2
\end{bmatrix}
}
\end{equation}
is effectively the negative incidence matrix of the Dynkin diagram: apart from the
diagonal entries $c_{ii} = 2$, one has
\[
c_{ij} =
\begin{cases}
  -1 & \text{if $\alpha_i$ and $\alpha_j$ are joined by an edge},
  \\[-2pt]
  \phantom{-}0 & \text{otherwise}.
\end{cases}
\]
For $k \ne 9$, the real Kac-Moody algebra $\mathfr{e}_{k(k)}$ is generated by (\emph{Chevalley}) generators
$e_1,e_2, \dots, e_{k}, \linebreak[0] f_1, f_2, \dots, f_{k}$, and $h_1, h_2, \dots, h_k$
and relations
\begin{itemize}
\setlength\itemsep{-1pt}
\item $[h_i,h_j] = 0$ \qquad \quad  \;\; for $1 \le 
i,j \le k$;
\item $[h_i,e_j] = c_{ij} e_j$;
\item $[h_i,f_j] = - c_{ij} f_j$;
\item $[e_i, f_j] = \delta_{ij} h_i$;
\item If $i \ne j$ (so $c_{{ij}}\leq 0$) then $\operatorname{ad}
  (e_i)^{1-c_{ij}}(e_j) = 0$ and $\operatorname{ad}
  (f_{i})^{1-c_{ij}}(f_{j})=0$ (\emph{Serre relations}).
  \end{itemize}
The \emph{Cartan subalgebra} is
\vspace{-2mm}
\[
\h = \bigoplus_{i=1}^k \R h_i,
\]
while the \emph{Borel subalgebra} $\mathfr{b}^+$ is the subalgebra generated by $h_i$'s and $e_i$'s, $i = 1, \dots, k$; similarly for $\mathfr{b}^-$.

\smallskip 
For $3 \le k \le 8$,  when $\det C = 9 - k > 0$, this gives the split real forms $\mathfr{e}_{3(3)} = \mathfr{sl}(3, \RR) \, \times \,\mathfr{sl}(2, \RR)$,  $\mathfr{e}_{4(4)} = \mathfr{sl}(5, \RR)$,
$\mathfrak{e}_{5(5)} =\mathfr{so}(5,5)$,  
$\mathfrak{e}_{6(6)}$, $\mathfrak{e}_{7(7)}$, $\mathfrak{e}_{8(8)}$ of semisimple Lie algebras. For $k \ge 10$, when $\det C < 0$, we get the split real forms $\mathfrak{e}_{k(k)}$ of Kac-Moody algebras of \emph{indefinite or Lorentzian type}, among which $\mathfrak{e}_{10(10)}$ is classified as \emph{hyperbolic}. See \Cref{table1}.

\newpage 

\smallskip 
For $k=9$, when $\det C = 0$, the above construction gives the derived subalgebra $[\mathfrak{e}_{9(9)}, \mathfrak{e}_{9(9)}]$, also known as the \emph{affine Lie algebra} corresponding to the finite-dimensional simple Lie algebra $\mathfrak{e}_{8(8)}$, of the \emph{affine Kac-Moody algebra} $\tilde{\mathfrak{e}}_{8(8)} = \mathfrak{e}_{9(9)}$. The full affine Kac-Moody algebra $\mathfrak{e}_{9(9)}$ may be constructed using the notion of a realization of the generalized Cartan matrix as $\g_9 = \mathfrak{e}_{9(9)}$ in \Cref{parabolic_action}, see particularly \Cref{identification}(ii). The Cartan subalgebra $\h$ of $\mathfrak{e}_{9(9)}$ adds an extra generator to $\bigoplus_{i=1}^k \R h_i$, the Lie algebra $\mathfrak{e}_{9(9)}$ is generated by $e_i$'s, $f_i$'s, and the elements of $\h$, and the Borel subalgebra $\mathfr{b}^+$ is generated by $e_i$'s, $i = 1, \dots, k$, and the elements of $\h$.

\smallskip
The \emph{simple roots} $\Pi = \{\alpha_1, \dots, \alpha_k\} \subset \h^*$ are defined by
\vspace{-1mm}
\[
\alpha_{i}(h_{j}) = c_{ji},
\]

\vspace{-1mm} 
\noindent with the condition of vanishing on the extra generator of $\h$ in the case $k=9$.

\paragraph{Maximal parabolics of Kac-Moody algebras $\mathfr{e}_{k(k)}$ for $k \ge 3$.} The Dynkin diagram has the shape \eqref{Dynkin-schem} for all $k \ge 4$ and \eqref{Dynkin-schem-3} for $k = 3$. We still have a root decomposition \eqref{root-decomp}, with the positive roots $\Delta^+$ being those roots which are integral linear combinations of the simple roots $\alpha_i$, $i = 1, \dots, k$, but the root spaces may no longer be one-dimensional. The Borel subalgebras also have a root-space decomposition as in \eqref{Borel}.

We consider the maximal parabolic subalgebra $\p_k^{k(k)}$ corresponding to the subset $\Sigma = \Pi \setminus \{\alpha_k\}$ for each $k$, as above. The parabolic subalgebra $\p_k^{k(k)}$ is generated by $e_1, \dots, e_{k}$, $f_1, \dots, f_{k-1}$, and the elements of the Cartan subalgebra $\h$. The Langlands decomposition \eqref{Langlands} still takes place, with $\mathfr{a} = \mathfr{so}(1,1)$ being one-dimensional, except for $k=9$, when $\dim \mathfr{a} = 2$. The Levi subalgebra $\mathfr{m}$, being the subalgebra generated by $e_1, \dots, e_{k-1}$, $f_1, \dots, f_{k-1}$ and $h_1, \dots, h_{k-1}$, is the Kac-Moody algebra corresponding to the Cartan matrix equal to the leading principal $(k-1) \times (k-1)$ submatrix of $C$. This submatrix is the Cartan matrix of type $A_{k-1}$, and therefore, the ``gravity line'' Lie algebra $\mathfr{m}$ is $\mathfr{sl}(k,\R)$ and thereby finite-dimensional for all $k \ge 3$. Given that $\dim \mathfr{e}_{k(k)} = \infty$ for $k \ge 9$, we see that the nilradical $\mathfr{n}$ also has to be infinite-dimensional.

\paragraph{Parabolic subalgebras in string theory and M-theory.} 
The maximal parabolic subalgebra $\p_k^{k(k)}$ appears in string theory and M-theory calculations \cite{GRV10a}\cite{GRV10b}.
Such parabolics persist in the low-energy limit,  in which the normalized volume\footnote{Here $\ell_{11}$ is the Planck length or scale.} $V_{k}/\ell_{11}^{k}$ of the M-theory torus
 is large,
            the semiclassical limit, which is eleven-dimensional supergravity, becomes a good approximation.

\smallskip 
Chern-Simons couplings play a role in the appearance of enhanced symmetries of Cremmer--Julia type in various theories. For generic values of the Chern--Simons coupling, there is only a parabolic Lie  subalgebra of symmetries after dimensional reduction, but this gets enhanced to the full and larger Cremmer--Julia Lie algebra of hidden symmetries if the coupling takes a specific value $\pm 1$ \cite{HKL16}. 

\smallskip 
By establishing an action of the maximal parabolic, we 
establish
a symmetry-breaking pattern.

\smallskip 
The following statements on the structure of maximal parabolics come from \cite{DM20} for $4 \le k \le 8$. For $k \le 3$, things work differently but are simpler, and we present these cases below for completeness. These computations make up the bulk of \Cref{table1}.

\medskip 
\noindent \fbox{$k=8$}  The split real form of $\mathfr{e}_8$ is $\mathfrak{e}_{8(8)}\,$.
The maximal  parabolic subalgebras   are given by their Langlands decomposition:
$$
\mathfrak{p}^{8(8)}_i = \mathfrak{m}^{8(8)}_i\ \oplus\ 
\mathfrak{so}(1,1)\ \oplus\ \mathfrak{n}^{\,8(8)}_i\ , \quad i=1,\ldots, 8 \;,
$$
where for $i = 8$
$$
 \begin{array}{ll}
\mathfrak{m}^{8(8)}_8  = \mathfrak{sl}(8,\RR)\;, \qquad 
& \dim\,\mathfrak{n}_8^{\,8(8)} ~=~ 92\,.
\end{array}
$$
Note   the dimensions in relation to $\mathfrak{e}_{8(8)}$ check the equality  $248=92+63+1+92$:
 $$
 \mathfrak{e}_{8(8)}=  \mathfrak{n}^{\,8(8) -}_8 \oplus \mathfrak{p}^{8(8)}_8
  = \mathfrak{n}^{\,8(8) -}_8 \oplus \mathfrak{m}^{8(8)}_8\ \oplus\ 
\mathfrak{so}(1,1)\ \oplus\ \mathfrak{n}^{\,8(8)}_8 .
 $$
The case $k=8$ indeed gives a global symmetry
$
\mathfrak{gl}(8, \RR) \ltimes (\RR^{56} \oplus \RR^{28} \oplus \RR^8)
$,
where the 56 shifts come from the direct scalars descending from the 3-form,
the 28 shifts come from scalars arising from the dual 6-form, and the 8 shifts come from the dual graviton. 
We observe that this gives the nilradical $\mathfrak{n}_8^{8(8)}$.


\medskip 
\noindent \fbox{$k=7$} For the split real form $\mathfrak{e}_{7(7)}$ of $\mathfr{e}_8$,
the maximal parabolic subalgebras are determined by:
$$
  \mathfrak{p}^{7(7)}_i = \mathfrak{m}^{7(7)}_i\ \oplus\ \mathfrak{so}(1,1)\ 
 \oplus\ \mathfrak{n}^{\,7(7)}_i\ , \quad i=1,\ldots,7\, ,
 $$
where for $i = 7$
 $$
 \begin{array}{ll}
  \mathfrak{m}^{7(7)}_7 = \mathfrak{sl}(7,\RR)\;,
& \dim\,\mathfrak{n}_7^{\,7(7)} ~=~ 42\;.
\end{array} 
$$

\medskip 
\noindent \fbox{$k=6$} For the split real form $\mathfrak{e}_{6(6)}$ of $\mathfr{e}_6$, 
the maximal parabolic subalgebras are determined by the decomposition:
$$
 \mathfrak{p}^{ 6(6)}_i ~=~ \mathfrak{m}^{ 6(6)}_i\ \oplus\ \mathfrak{so}(1,1)\ \oplus\
 \mathfrak{n}^{\, 6(6)}_i\ , \quad i=1, \dots, 6\, ,
$$
where  for $i = 6$
$$
 \begin{array}{ll}
\mathfrak{m}^{ 6(6)}_6 = \ \mathfrak{sl}(6,\RR)\;,
&  \dim\,\mathfrak{n}_6^{\, 6(6)} ~=~ 21\;.
\end{array} 
$$

\medskip 
\noindent \fbox{$k=5$} For $\mathfrak{e}_{5(5)}=\mathfrak{so}(5,5)$, the maximal parabolic subalgebras are determined by:
$$
 \mathfrak{p}^{ 5(5)}_i ~=~ \mathfrak{m}^{ 5(5)}_i\ \oplus\ \mathfrak{so}(1,1)\ \oplus\
 \mathfrak{n}^{\, 5(5)}_i\ , \quad i=1, \dots, 5\, ,
$$
where  for $i = 5$
$$
 \begin{array}{ll}
 \mathfrak{m}^{ 5(5)}_5  =  
\mathfrak{sl}(5,\RR) \;,
& \dim\,\mathfrak{n}_5^{\, 5(5)} ~=~10 \;.
\end{array} 
$$


\medskip 
\noindent \fbox{$k=4$}  For $\mathfrak{e}_{4(4)}=\mathfrak{sl}(5, \R)$, the maximal parabolic subalgebras are  
$$
 \mathfrak{p}^{ 4(4)}_i ~=~ \mathfrak{m}^{ 4(4)}_i\ \oplus\ \mathfrak{so}(1,1)\ \oplus\
 \mathfrak{n}^{\, 4(4)}_i\ , \quad i=1, \dots, 4\, ,
$$
where  for $i = 4$
$$
 \begin{array}{ll}
 \mathfrak{m}^{ 4(4)}_4   = \mathfrak{sl}(4, \R)\,, 
& \dim\,\mathfrak{n}_4^{\, 4(4)} ~=~4 \;.
\end{array} 
$$


\medskip 
\noindent \fbox{$k=3$}  For $\mathfrak{e}_{3(3)}= \mathfrak{sl}(3, \R) \times \mathfrak{sl}(2, \R)$,
we have two factors.
The algebra $\mathfrak{sl}(3, \RR)$ has two simple roots $\alpha_1$, $\alpha_2$ and positive roots
given by $\Delta^+(\mathfrak{sl}(3, \RR))=\{\alpha_1, \alpha_2, \alpha_1 + \alpha_2\}$. The algebra $\mathfrak{sl}(2, \RR)$ has one simple root $\alpha_3$, the only positive root. The maximal parabolic 
corresponding to $\alpha_3$ is
\[
\mathfrak{p}_3^{3(3)} = \mathfrak{m}^{ 3(3)}_3\ \oplus\ \mathfrak{so}(1,1)\ \oplus\
 \mathfrak{n}^{\, 3(3)}_3\, ,
\]
where
$$
 \begin{array}{ll}
 \mathfrak{m}^{ 3(3)}_3   = \mathfrak{sl}(3, \R)\,, 
& \dim\,\mathfrak{n}_3^{\, 3(3)} ~=~1 \;.
\end{array} 
$$



\medskip 
\noindent \fbox{$k=2$} For $\mathfrak{e}_{2(2)}=\mathfrak{sl}(2, \R)$, the root $\alpha_2$ is missing, i.e., $\Sigma = \Pi = \{\alpha_1\}$, and the maximal parabolic subalgebra corresponding to $\alpha_2$ is nonproper: $\p_2^{2(2)} = \mathfrak{sl}(2, \R) = \mathfrak{e}_{2(2)}$.

\medskip 
\noindent \fbox{$k=1$} For $\mathfrak{e}_{1(1)}=\mathfrak{sl}(1, \R) = 0$, there are no roots whatsoever, i.e., $\Sigma = \Pi = \varnothing$, and the maximal parabolic subalgebra is $\p_1^{1(1)} = 0 = \mathfrak{e}_{1(1)}$.

\medskip 
\noindent \fbox{$k=0$} For $\mathfrak{e}_{0(0)}=\mathfrak{sl}(0, \R) = \varnothing$, the maximal parabolic subalgebra is $\p_0^{0(0)} = \varnothing = \mathfrak{e}_{0(0)}$.

\section{The action of a parabolic subalgebra \texorpdfstring{$\p_k$}{}}
\label{parabolic_action}

In this section, we establish the action of (trivial central extensions of) the distinguished maximal parabolic subalgebras $\p^{k(k)}_k$
described in \cref{sec-para} on the homotopy type of the toroidification models  $\mc{T}^k S^4 := \mc{L}^k S^4\dslash T^k$
described in \cref{Sec-toroidMod}. One physical significance of this action is that it corresponds to being
on-shell, i.e., capturing physical states.  We will assume $k \ge 3$ here and deal with the low-$k$ cases, $0 \le k \le 2$, separately afterward.

\medskip 
The plan of the construction follows the work \cite{SV1}, which creates a large enough, in fact maximal, split real torus acting on the Sullivan minimal model $M(\mc{L}_c^k S^4)$ of the iterated cyclification of $S^4$. The same torus turns out to be the maximal split torus acting on the Sullivan minimal model $M(\mc{T}^k S^4)$ of the $k$-toroidification of $S^4$, as per \Cref{maxtorus} below. The action of the split torus decomposes the vector space $M(\mc{L}_c^k S^4)$ into weight subspaces, see the proof of the theorem. The parabolic subalgebra $\p_k$ we will describe will contain the Lie algebra $\h_k$  of the torus as the Cartan subalgebra. In a Lie algebra action, weights add. This dramatically restricts the possibilities for the action of $\p_k$ on $M(\mc{T}^k S^4)$. Combined with our focus on linear actions, the action formulas which we present later in this section are predetermined, up to constant factors.

\begin{theorem}[Maximal torus of toroidification]
\label{maxtorus}
The real algebraic group $\Aut M (\mc{T}^k S^4)$ for $k \ge 0$ contains a maximal $\RR$-split torus canonically isomorphic to $\GG_m (\R)\times T^{k}_\CC (\R)$, where $\GG_m(\R)  = \R^\times$ is the multiplicative group of nonzero real numbers and $T^{k}_\CC (\R) \cong (\R^\times)^k$ is the split form of the complexification $T^{k}_\CC$ of the compact torus $T^k$ used in the definition of $\mc{T}^k S^4$.
\end{theorem}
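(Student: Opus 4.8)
The plan is to write the torus action down explicitly on the Sullivan minimal model $M(\mc{T}^k S^4)$ of \Cref{tor}, and then to prove maximality by a weight count, running parallel to (and slightly extending) the maximal-torus argument of \cite{SV1} for the iterated cyclification $M(\mc{L}_c^k S^4)$. Recall from \Cref{tor} that $M(\mc{T}^k S^4)$ is the free graded-commutative algebra on the generators $s_{i_1}\cdots s_{i_p} g_4$ $(0\le p\le 3$, $i_1<\dots<i_p)$, $s_{j_1}\cdots s_{j_q} g_7$ $(0\le q\le 6$, $j_1<\dots<j_q)$, and $w_1,\dots,w_k$; in particular it is finitely generated with generators in degrees $1$ through $7$, so $\Der M(\mc{T}^k S^4)$ is a finite-dimensional Lie algebra. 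For $(\lambda;t_1,\dots,t_k)\in\R^\times\times(\R^\times)^k$ I would set
\begin{align*}
w_i&\;\longmapsto\; t_i\, w_i,\\
s_{i_1}\cdots s_{i_p} g_4&\;\longmapsto\; \lambda\,(t_{i_1}\cdots t_{i_p})^{-1}\, s_{i_1}\cdots s_{i_p} g_4,\\
s_{j_1}\cdots s_{j_q} g_7&\;\longmapsto\; \lambda^2\,(t_{j_1}\cdots t_{j_q})^{-1}\, s_{j_1}\cdots s_{j_q} g_7,
\end{align*}
and extend multiplicatively to an algebra automorphism $\theta_{\lambda,t}$ of $M(\mc{T}^k S^4)$.

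The second step is to check that $\theta_{\lambda,t}$ commutes with the differential $d_t$, hence is a DGCA automorphism. This is routine weight bookkeeping against the defining relations $d_t v = dv + \sum_i w_i\, s_i v$ and $[d_t,s_i]=0$ of \Cref{tor} --- equivalently against the explicit formulas in the Example following \Cref{trd}: in the assignment above each generator $g$ is a weight vector, with (writing the character group additively, $\mathrm{wt}(g_4)=\lambda$, $\mathrm{wt}(w_l)=t_l$) one has $\mathrm{wt}(s_I g_4) = \mathrm{wt}(g_4)-\sum_{l\in I}\mathrm{wt}(w_l)$, $\mathrm{wt}(g_7) = 2\,\mathrm{wt}(g_4)$ and $\mathrm{wt}(s_J g_7) = 2\,\mathrm{wt}(g_4)-\sum_{l\in J}\mathrm{wt}(w_l)$, and every monomial occurring in $d_t g$ has the same weight as $g$; for instance every term of $d_t g_4 = \sum_i w_i\, s_i g_4$ has weight $\lambda$, and the term $g_4^2$ in $d_t g_7$ has weight $2\lambda = \mathrm{wt}(g_7)$. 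Thus $\theta$ defines a homomorphism $\GG_m\times\GG_m^k\to\Aut M(\mc{T}^k S^4)$, which is injective, hence realizes a split subtorus, because the generators are weight vectors with pairwise distinct characters. Its factorization $\GG_m\times T^k_\CC(\R)$ is canonical: the rank-$k$ factor is the subtorus acting trivially on $g_4$ and $g_7$, realized faithfully through its action on $w_1,\dots,w_k$, the pullbacks of the generators of $H^*(BT^k;\R)$ along $\mc{T}^k S^4\to BT^k$, and so canonically identified with the split form $T^k_\CC(\R)\cong(\R^\times)^k$ of the complexified rotation torus of the $k$ loop directions; the complementary $\GG_m$ is the subtorus fixing all $w_i$ and rescaling the fundamental class of $S^4$ by $\lambda$ (and $g_7$ by $\lambda^2$).

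It then remains to prove maximality, which I expect to be the only real obstacle. The strategy is to show that \emph{every} split subtorus $T'\subseteq\Aut M(\mc{T}^k S^4)$ has $\dim T'\le k+1$; since the torus $T$ just constructed has dimension $k+1$ and is connected, any split torus containing $T$ then coincides with $T$. First, $T'$ acts faithfully on the finite-dimensional graded space of indecomposables $Q := M^+/(M^+)^2$: an automorphism trivial on $Q$ restricts to the identity on the associated graded of the word-length filtration of each finite-dimensional $M^n$, hence is unipotent there, and a torus has no nontrivial unipotent element. Second, minimality of $M(\mc{T}^k S^4)$ produces a canonical $\Aut$-equivariant quadratic differential $\bar d_t\colon Q\to S^2 Q$; inspecting its shape from \Cref{tor} --- each $[s_I g_4]$ linked to $[s_l s_I g_4]\cdot[w_l]$, each $[s_J g_7]$ linked to $[s_l s_J g_7]\cdot[w_l]$ and to $[g_4]\cdot[s_{J'} g_4]$, all with nonzero coefficients --- one reads off that in a $T'$-eigenbasis every weight occurring on $Q$ lies in the subgroup of $X^*(T')$ generated by the $k+1$ weights attached to $[g_4]$ and to $[w_1],\dots,[w_k]$, namely $\mathrm{wt}([s_I g_4]) = \mathrm{wt}([g_4])-\sum_{l\in I}\mathrm{wt}([w_l])$ and $\mathrm{wt}([s_J g_7]) = 2\,\mathrm{wt}([g_4])-\sum_{l\in J}\mathrm{wt}([w_l])$. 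Since $T'$ acts faithfully on $Q$, these weights span $X^*(T')\otimes\Q$, so $\dim T' = \rank X^*(T')\le k+1$. The hard part will be making this last step airtight: one must run the weight analysis cleanly over the full set of $\binom{k}{0}+\dots+\binom{k}{3}$ descendants of $g_4$ and $\binom{k}{0}+\dots+\binom{k}{6}$ descendants of $g_7$, and justify that a $T'$-eigenbasis of $Q$ can be chosen compatibly with the chain structure of $\bar d_t$ --- equivalently, show directly that $\h_k=\Lie(T)$ is a maximal toral subalgebra of the finite-dimensional Lie algebra $\Der M(\mc{T}^k S^4)$. This is precisely the point at which one imports and extends the bookkeeping of \cite{SV1}, carried out there for $M(\mc{L}_c^k S^4)$.
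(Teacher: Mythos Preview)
Your construction of the $(k+1)$-dimensional split torus and the verification that it acts by DGCA automorphisms are correct and match the paper's (up to harmless sign conventions in the characters). The divergence is entirely in the maximality step.

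Your plan for maximality is a dimension bound: any split torus $T'$ acts faithfully on $Q$, and the $T'$-weights on $Q$ should span a lattice of rank $\le k+1$. The gap you flag is genuine and more serious than presented. Your weight relations $\mathrm{wt}([s_I g_4]) = \mathrm{wt}([g_4]) - \sum_{l\in I}\mathrm{wt}([w_l])$ etc.\ presuppose that $[g_4]$ and $[w_1],\dots,[w_k]$ are themselves $T'$-eigenvectors, but for an arbitrary split torus $T'$ there is no reason they should be; a $T'$-eigenbasis of $Q$ will in general mix these with the other generators of the same degree (e.g.\ $Q^2$ also contains all $[s_is_jg_4]$ and, for $k\ge 5$, the $[s_{j_1}\cdots s_{j_5} g_7]$; $Q^4$ contains the $[s_{j_1}s_{j_2}s_{j_3} g_7]$). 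In such a mixed basis the chain structure of $\bar d_t$ no longer reads off the clean weight identities you need. Worse, the ``bookkeeping of \cite{SV1}'' you propose to import rests on the identity $\dim(\ker d\cap V)=k+1$ for the cyclification model; the paper notes explicitly that this identity \emph{fails} for the toroidification model (for instance, for $k\ge 4$ every $s_{i_1}s_{i_2}s_{i_3} g_4$ is $d_t$-closed, already giving $\binom{k}{3}+k$ closed generators), so that argument cannot be transplanted.

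The paper sidesteps all of this with a centralizer argument: take any $t'\in\Aut M(\mc{T}^kS^4)$ commuting with your torus $T$. Then $t'$ preserves each $T$-weight space, and the key observation is that several of these are one-dimensional: $M^4_{\epsilon_0^{-1}}=\R g_4$, $M_{\epsilon_i^{-1}}=\R w_i$, $M^7_{\epsilon_0^{-2}}=\R g_7$, $M^6_{\epsilon_0^{-2}\epsilon_i}=\R\, s_i g_7$. Hence $t'$ acts on $g_4,w_i,g_7,s_ig_7$ by scalars, and comparing $t'dg_7 = dt'g_7$ (and then $t'dg_4$, $t'ds_ig_4$, \dots) forces these scalars to obey exactly your torus formulas, so $t'\in T$. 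This is both simpler and avoids having to reason about eigenbases for an unknown $T'$. I recommend replacing your dimension-bound sketch with this direct computation.
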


\begin{proof}
This theorem appears to be similar to that of \cite[Corollary 3.6]{SV1}, but the proof we presented there was based on the computation $\ker d \cap V = k+1$, which does not hold in our case. We will give a less conceptual, though simpler proof, which actually works in both cases.

The fact that $\Aut M (\mc{T}^k S^4)$ is a real algebraic group follows from the following argument. An endomorphism of $M (\mc{T}^k S^4)$ as a graded algebra is determined by the images of the finitely many free generators of $M (\mc{T}^k S^4)$. Each generator may map anywhere to the component of the corresponding degree, and all such components are finite-dimensional. This defines a real affine space. The compatibility conditions with the differential define a subvariety in this affine space, while the invertibility condition for the endomorphism defines a Zariski open set in the subvariety.

The $\Q$-split algebraic torus $\GG_m(\Q) \times T^{k}_\CC (\Q)$ acts on $M (\mc{T}^k S^4)$: an element $(p_0/q_0, g)$ acts by a map $\mc{T}^k S^4 \to \mc{T}^k S^4$ induced on $\mc{T}^k S^4 = \Map (T^k, S^4)$ by postcomposion with a map $S^4 \to S^4$ of degree $p_0/q_0$ and precomposition with the map $g: T^k \to T^k$ in the rational category. Let us decsribe how this works. Since a map $S^4 \to S^4$ of degree $q_0 \ne 0$ is invertible in the rational category of spaces, morphisms of fractional degrees make sense. The $\Q$-split torus $T^{k}_\CC (\Q)$ acts on the compact torus $T^k$ rationally in a similar manner, as follows. First, choose an isomorphism $T^k \cong (S^1)^k$. This produces a splitting $T^k_\CC (\CC) \cong (\CC^\times)^k$, which yields a splitting $T^k_\CC (\Q) \cong (\Q^\times)^k$. Then we make an element $(p_1/q_1, \dots, p_k/q_k) \in (\Q^\times)^k$ act on $\Map (T^k, S^4)$ by precomposition with a map $T^k \to T^k$ of multidegree $(p_1/q_1, \dots, p_k/q_k)$ in rational category. The induced action on the Sullivan minimal model $M (\mc{T}^k S^4)$ is algebraic, because the construction comes from an action at the level of spaces in the rational category. Thus, we get a morphism of real algebraic groups $\GG_m(\R) \times T^{k}_\CC (\R) \to \Aut M (\mc{T}^k S^4)$ defining an action of the $(k+1)$-dimensional split real torus.

We claim that via this action, the torus $\GG_m(\R) \times T^{k}_\CC (\R)$ embeds in $\Aut M (\mc{T}^k S^4)$ as a maximal split torus. Let us recall general facts about split torus actions on DGCAs.

If a split real torus $T$ acts on a graded real vector space with finite-dimensional components, such as $M = S(V)$, that vector space splits into a \emph{weight decomposition}
\vspace{-2mm} 
\begin{equation*}
M = \bigoplus_{\alpha \in \mathfrak{X}(T)} M_\alpha
\end{equation*}
indexed by the \emph{character group} $\mathfrak{X}(T) = \Hom_\RR (T, \GG_m)$ of
real algebraic group homomorphisms from $T$ to the multiplicative group
$\GG_m$, so that $T$ acts on each \emph{weight space} $M_\alpha$ by
the character $\alpha$:
\[
M_\alpha = \big\{m \in M \; | \; t \cdot m = \alpha(t) m \quad \text{for
  all $t \in T$}\big\}.
\]
If the rank (dimension) of $T$ is  $r \ge 0$, then $\mathfrak{X}(T) \cong
\ZZ^r$; see \cite[Corollary 8.2 and Proposition 8.5]{borel}. Since
automorphisms of $M$ have to respect the DGCA structure, i.e.,
the $\ZZ$-grading, differential, and multiplication on $M$, the weight
decomposition is automatically compatible with it. That is, we have
\vspace{-1mm} 
  \begin{enumerate}[{\bf (i)}]
  \setlength\itemsep{-1pt}
  \item $M^n = \bigoplus_{\alpha \in \mathfrak{X}(T)} (M_\alpha \cap M^n)$ for
    each $n \ge 0$;
  \item $d(M_\alpha ) \subset M_\alpha$ for each $\alpha \in \mathfrak{X}(T)$;
    \item $M_\alpha \cdot M_\beta \subset M_{\alpha+\beta}$ for all
      $\alpha, \beta \in \mathfrak{X}(T)$.
    \end{enumerate}
Conversely, a weight decomposition that satisfies these properties defines an action of the torus on the DGCA $M$.

We argued in \cite{SV1}[Section 3.4] that a very similar action of $\GG_m(\R)^{k+1}$ on the cyclification $\mc{L}_c^k S^4$ corresponds to the action on its Sullivan minimal model $M(\mc{L}_c^k S^4)$ that satisfies
\vspace{-2mm}
\begin{align}
\label{toract1}
t g_4 & = \epsilon^{-1}_0(t) g_4, & t g_7 &= \epsilon^{-2}_0(t) g_7,\\
\label{toract2}
t w_i & =  \epsilon_i^{-1}(t) w_i, & [t,s_i] & \coeq t s_i - s_i t =  \epsilon_i(t) s_i,
\end{align}
where
\vspace{-2mm}
\begin{equation}
\label{epsilon}
\begin{split}
    \epsilon_0(t_0, t_1, \dots, t_k) & = t_0^{-1},\\
    \epsilon_i(t_0, t_1, \dots, t_k) & = t_i^{-1}
\end{split}
\end{equation}
for $i = 1, \dots, k$. The generators of $M(\mc{L}_c^k S^4)$ are exactly the same as those of $M(\mc{T}^k S^4)$, i.e., the underlying graded algebras are isomorphic, but the differentials are slightly different, see \cite[Section 2.5]{SV1} ($M(\mc{L}_c^k S^4)$ is just an iteration of $M(\mc{L}_c S^4)$, see \Cref{CyclicModel}) and \Cref{tor}, respectively. Here we use a slightly different normalization as compared to \cite{SV1}, whence the excessive inverses, but this is more convenient for the purposes of this paper.

The same formulas correspond to the action, defined in the rational homotopy category above, of the split torus $\GG_m(\R) \times T^{k}_\CC (\R)$, under a choice of an isomorphism $T^k \cong (S^1)^k$ and thereby splitting $ T^{k}_\CC (\R) \cong \GG_m(\R)^k$. This action produces a weight decomposition of $M = M(\mc{T}^k S^4)$. For example, $s_1 s_2 g_7 \cdot w_2 \in M_{\epsilon_0^{-2} \epsilon_1}$.

From \eqref{toract1}--\eqref{epsilon}, it is clear that if an element $t$ of the torus $\GG_m(\R) \times T^{k}_\CC (\R)$ acts trivially on $w_1, \dots, w_k$, and $g_4$, then $t=1$. Therefore, the action homomorphism $\GG_m(\R) \times T^{k}_\CC (\R) \to \Aut M(\mc{T}^k S^4)$ is injective, and we have got a split real torus of rank $k+1$ in $\Aut M(\mc{T}^k S^4)$.

Now let us prove that this split torus is maximal. If we have an element $t' \in \Aut M(\mc{T}^k S^4)$ which commutes with every element of the split torus $\GG_m(\R) \times T^{k}_\CC (\R)$, the automorphism $t'$ will automatically preserve the weight spaces of the split torus action on $M = M(\mc{T}^k S^4)$. Observe that the following weight subspaces are one-dimensional: $M_{\epsilon_0^{-1}} \cap M^4 = \R g_4$, $M_{\epsilon_i^{-1}} = \R w_i$, $i = 1, \dots, k$. Hence, we have $t' g_4 = \beta_0 g_4$ and $t' w_i = \beta_i w_i$ for all $i = 1, \dots, k$ and some $\beta_0, \beta_1, \dots, \beta_k \in \R \setminus \{0\}$. We claim that $t'$ has to act on the generators of $M$ according to Equations \eqref{toract1}--\eqref{toract2} for $t = (\beta_0, \beta_1, \dots, \beta_k)$. This will identify this element of the split torus $\GG_m(\R) \times T^{k}_\CC (\R) \subset \Aut M$ with the automorphism $t'$, and imply that the split torus is maximal.

It remains to establish the action of $t'$ on the generators of $M$, as claimed. We have $M_{\epsilon_0^{-2}} \cap M^7 = \R g_7$ and $M_{\epsilon_0^{-2} \epsilon_i} \cap M^6 = \R s_i g_7$ for each $i$, whence $t' g_7 = \gamma g_7$ and $t' s_i g_7 = \gamma_i s_i g_7$ for some $\gamma, \gamma_i \in \R \setminus \{0\}$. We also have $dg_7 = - \tfrac{1}{2} g_4^2 + \sum_{i=1}^k w_i \cdot s_i g_7$, as per \Cref{tor}, and 
\vspace{-2mm} 
\begin{align*}
d t' g_7 & = d \gamma g_7 = - \tfrac{1}{2} \gamma g_4^2 + \gamma \sum_{i=1}^k w_i \cdot s_i g_7,\\[1pt]
t' dg_7 & = - \tfrac{1}{2} \beta_0^2 g_4^2 + \beta_i \gamma_i \sum_{i=1}^k w_i \cdot s_i g_7.
\end{align*}
Being an automorphism of the DGCA $M$, the element $t'$ has to commute with the differential, which implies that the right-hand sides of the above equations must be equal. Since $g_4$, $w_i$, and $s_i g_7$ are among the free generators of the DGCA $M$, this means $\gamma = \beta_0^2$ and $\gamma_i = \beta_0^2 \beta_i^{-1}$. Similarly, looking at the action of $t'$ on $d g_4$, $d s_i g_4$, etc., we show that $t' s_i g_4 = \beta_0 \beta_i^{-1} s_i g_4$, $t' s_i s_j g_4 = \beta_0 \beta_i^{-1} \beta_j^{-1} s_i s_j g_4$, and so on for all the generators of $M$. This confirms that $t'$ acts on $M$ as prescribed by Equations \eqref{toract1}--\eqref{toract2} for $t = (\beta_0, \beta_1, \dots, \beta_k)$ and concludes the proof.
\end{proof}

\medskip 
Below, we describe a (trivial) one-dimensional central
extension $\g_k$ of the split real form $\mathfrak{e}_{k(k)}$ of the complex Lie algebra $\mathfr{e}_k$ of type $E_k$
based on the
Chevalley generators of $\mathfr{e}_k$ as a \emph{Kac-Moody algebra} for $ k \ge 3$ except $k = 9$. For $k=9$, we will have $\g_9 = \mathfrak{e}_{9(9)}$, which is already a nontrivial central extension of the loop algebra $\mathfr{e}_{8(8)} \otimes \R[t, t^{-1}]$ augmented by a derivation \cite[Chapter 7]{Kac}. The theory of real forms of complex Kac-Moody algebras 
(see \cite{KW}\cite{BBMR}\cite{BenM})
generalizes the theory of
real semisimple Lie algebras
(see \cite{Knapp.1997}). The split real affine Kac-Moody algebras $\mathfr{e}_{9(9)}$, $\mathfr{e}_{10(10)}$, and $\mathfr{e}_{11(11)}$ were proposed as symmetries in the context of M-theory
(see \cite{HKN}\cite{DHN}\cite{West01}).

\medskip 
The maximal parabolic subalgebra $\p_k \subset \g_k$ we will define will also be a trivial central extension of the parabolic subalgebra $\p_k^{k(k)} \subset \mathfr{e}_{k(k)}$, except that for $k= 9$, we will just have $\p_9 = \p_9^{9(9)}$. In \Cref{sec-para}, we described the parabolic subalgebras $\p_k^{k(k)} \subset \mathfr{e}_{k(k)}$ explicitly in the classical, finite-dimensional case $k \le 8$. For $k \ge 9$, the parabolic subalgebra $\p_k^{k(k)}$ of the split real Kac-Moody algebra $\mathfr{e}_{k(k)}$ is defined by dropping the generator $f_k$ in its set of Chevalley generators, just as we do it in \Cref{pk-acts} below.

\medskip 
Let $\h_k$ be the $(k+1)$-dimensional real vector space with a basis
\(
\label{basis-h}
\{h_0, h_1, \dots, h_k\}
\)
and the Minkowski inner product\footnote{Here we use the negative of the inner product defined in \cite{SV1}\cite{SV2}, to match with the positive definiteness of the Killing form on the Cartan subalgebra of $\mathfr{e}_{k(k)}$ for $3 \le k \le 8$.}
\[
(h_0, h_0) = -1, \qquad (h_i, h_i) = 1 \quad \text{for $1 \le i \le k$,} \quad \text{and} \quad (h_i, h_j) = 0 \quad \text{for $0 \le i \ne j \le k$.}
\]
Let $\h_k^*$ be the linear dual space and $\{ \eps_0, \eps_1, \dots, \eps_k\}$ be the image of the basis \eqref{basis-h} under the isomorphism $\h_k \xrightarrow{\sim} \h_k^*$ induced by the inner product. Thus, we have
\[
\eps_0 (h_0) = -1, \qquad \eps_i (h_i) = 1 \quad \text{for $1 \le i \le k$,} \quad \text{and} \quad \eps_i (h_j) = 0 \quad \text{for $0 \le i \ne j \le k$.}
\]
The inner product on $\h_k$ induces one on the dual space $\h_k^*$. Explicitly,
\[
(\eps_0, \eps_0) = -1, \qquad (\eps_i, \eps_i) = 1 \quad \text{for $1 \le i \le k$,} \quad \text{and} \quad (\eps_i, \eps_j) = 0 \quad \text{for $0 \le i \ne j \le k$.}
\]

It is known (see \cite{Man}\cite{Dolgachev}) that the following set of vectors in the orthogonal complement in the dual space
\[
(K_k^\perp)^* \coeq \{ \alpha \in \h_k^* \mid \alpha(K_k) = 0\} \subset \h_k^*
\]
to the distinguished element
\[
K_k \coeq -3h_0 + \sum_{i=1}^k h_i \in \h_k
\]
forms a set of \emph{simple roots} of type $E_k$:
\begin{align*}
    \alpha_i & \coeq \eps_{i} - \eps_{i+1}, \qquad i = 1, \dots, k-1,\\
    \alpha_k & \coeq \eps_0 - \eps_1 - \eps_2 - \eps_3.
\end{align*}
The vectors
\vspace{-2mm} 
\begin{align*}
   \alpha_i^\vee & \coeq h_{i} - h_{i+1}, \qquad i = 1, \dots, k-1,\\
    \alpha_k^\vee & \coeq h_0 - h_1 - h_2 - h_3
\end{align*}
form the corresponding set of \emph{simple coroots} in the orthogonal complement $K_k^\perp =  \{ x \in \h_k \mid (x, K_k) = 0\} \subset \h_k$.
Then the matrix $C = (c_{ij})$, $1 \le i, j \le k$, with entries
\[
c_{ji} \coeq
\alpha_{i}(\alpha _{j}^{\vee})
\]
is exactly the generalized Cartan matrix \eqref{Cartan-matrix} of type $E_k$.

\medskip 
The root $\alpha_k$
refers to the distinguished node of the Dynkin
diagram \eqref{Dynkin-schem} of type $E_k$ (which is the same as the Satake diagram 
which describes the split real form $\mathfr{e}_{k(k)}$ of the complex simple Lie algebra $\mathfr{e}_k$ of type $E_k$).

Define the real Lie algebra $\g_k$ by (\emph{Chevalley}) generators
$e_1,e_2, \dots, e_{k}, \linebreak[0] f_1, f_2, \dots, f_{k}$ and the
elements of $\h_k$ and relations 
\cite{Ch55} (see \cite{Ca93} for an exposition):
\begin{itemize}
\setlength\itemsep{-1pt}
\item $[h,h'] = 0$ \qquad \quad  \;\; for $h, h' \in \h_k$;
\item $[h,e_i] = \alpha_i(h) e_i$  \quad \; for $h \in \h_k$;
\item $[h,f_i] = - \alpha_i(h) f_i$  \;\;  for $h \in \h_k$;
\item $[e_i, f_j] = \delta_{ij} \alpha_i^\vee$;
\item If $i \ne j$ (so $c_{{ij}}\leq 0$) then $\operatorname{ad}
  (e_i)^{1-c_{ij}}(e_j) = 0$ and $\operatorname{ad}
  (f_{i})^{1-c_{ij}}(f_{j})=0$ (\emph{Serre relations}).
  \end{itemize}

  \begin{prop}[Identification of the Lie algebra $\g_k$]
  $\,$
\label{identification}

  \vspace{-4mm} 
     \begin{enumerate}[\bf (i)]
     \setlength\itemsep{-2pt}
         \item For $3 \le k \le 8$, the Lie algebra $\g_k$ is the trivial one-dimensional central extension of the split real form $\mathfr{e}_{k(k)}$ of the finite-dimensional complex semisimple Lie algebra of type $E_k$.
         \item For $k = 9$, the Lie algebra $\g_k$ is the split real form $\mathfr{e}_{9(9)}$ of the Kac-Moody algebra of affine type $\tilde{E}_{8} = E_9$.
         \item For $k \ge 10$, the Lie algebra $\g_k$ is the trivial one-dimensional central extension of the split real form $\mathfr{e}_{k(k)}$ of the Kac-Moody algebra of indefinite type $E_k$.
     \end{enumerate}
  \end{prop}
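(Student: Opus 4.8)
The plan is to recognize the definition of $\g_k$ as that of the Kac--Moody algebra attached, over $\RR$, to the realization $(\h_k,\Pi,\Pi^\vee)$ of the generalized Cartan matrix $C$ of type $E_k$ in the sense of \cite[Chapter~1]{Kac}, and then to separate the three cases purely by comparing $\dim\h_k=k+1$ with the dimension $2k-\rank C$ of a minimal realization. First I would check that $(\h_k,\Pi,\Pi^\vee)$ is indeed a realization: the coroots $h_1-h_2,\dots,h_{k-1}-h_k,\,h_0-h_1-h_2-h_3$ are manifestly linearly independent in $\h_k$; the Minkowski form identifies $\h_k\xrightarrow{\sim}\h_k^*$ and carries $\alpha_i^\vee$ to $\alpha_i$, so $\Pi$ is linearly independent in $\h_k^*$ as well; and $\langle\alpha_i,\alpha_j^\vee\rangle=c_{ji}$ reproduces $C$, which is exactly the computation recorded just before the statement. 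Using real structure constants in the Chevalley basis produces the split real form, as in \cite[App.~A]{HKN}; since $C$ is symmetrizable, the Serre relations in the presentation cut $\g_k$ down to precisely $\mathfrak{g}(C)$ for this realization.

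I would then treat the two nondegenerate cases $3\le k\le 8$ and $k\ge 10$ together. Here $\det C=9-k\ne 0$, so $\rank C=k$ and a minimal realization has dimension $k$, one less than $\dim\h_k$. The extra dimension is spanned by $K_k=-3h_0+\sum_{i=1}^k h_i$: a one-line check (using $\eps_0(K_k)=3$ and $\eps_i(K_k)=1$ for $i\ge 1$) gives $\alpha_i(K_k)=0$ for every simple root, so $K_k$ spans a central ideal of $\g_k$; and $(K_k,K_k)=k-9\ne 0$ forces $K_k\notin K_k^\perp$, which contains $\bigoplus_{i=1}^k\RR\alpha_i^\vee$, so $\h_k=\bigoplus_{i=1}^k\RR\alpha_i^\vee\oplus\RR K_k$. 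By Kac's analysis of non-minimal realizations, the subalgebra generated by the $e_i$ and $f_i$ is the Kac--Moody algebra $\mathfrak{g}(C)$ of the minimal realization, with Cartan $\bigoplus_i\RR\alpha_i^\vee$, and $\g_k=\mathfrak{g}(C)+\h_k$ with $\mathfrak{g}(C)\cap\h_k=\bigoplus_i\RR\alpha_i^\vee$; since $\mathfrak{g}(C)$ over $\RR$ in the Chevalley basis is by definition $\mathfrak{e}_{k(k)}$ (semisimple for $3\le k\le 8$, of indefinite type $E_k$ for $k\ge 10$), this yields $\g_k=\mathfrak{e}_{k(k)}\oplus\RR K_k$ with $\RR K_k$ central. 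That is the asserted trivial one-dimensional central extension, proving (i) and (iii).

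For $k=9$ the matrix $C$ is the affine Cartan matrix of type $\tilde E_8=E_9$, so $\det C=0$, $\rank C=8$, and a minimal realization has dimension $2\cdot 9-8=10=\dim\h_9$. One checks directly that $\{\alpha_i^\vee\}_{i=1}^9$ is still linearly independent, spanning a $9$-dimensional subspace of the $10$-dimensional $\h_9$, so $(\h_9,\Pi,\Pi^\vee)$ is a minimal realization; by the uniqueness (up to isomorphism) of the Kac--Moody algebra of a minimal realization \cite[Chapter~1]{Kac}, $\g_9$ is the standard affine Kac--Moody algebra $\mathfrak{g}(C)$ of type $E_9$ --- the full algebra including the scaling derivation --- which over $\RR$ with real Chevalley constants is the split real form $\mathfrak{e}_{9(9)}$. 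Consistently, $K_9$ now satisfies $(K_9,K_9)=0$ and lies \emph{inside} $\bigoplus_i\RR\alpha_i^\vee$, playing the role of the canonical central element rather than an extra central summand, while the tenth dimension of $\h_9$ is the derivation. This gives (ii).

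The step I expect to require the most care is the bookkeeping that distinguishes the three cases: one must invoke Kac's description of how a non-minimal realization enlarges $\mathfrak{g}(C)$ by an abelian central factor, but only when the enlargement direction lies in $\bigcap_i\ker\alpha_i$ and outside $\bigoplus_i\RR\alpha_i^\vee$, and then note that for $k=9$ the extra dimension forced by $\det C=0$ is already absorbed as the derivation, so no central enlargement beyond the standard affine algebra occurs. Everything else --- the linear-independence verifications, the identity $\alpha_i(K_k)=0$, and the sign of $(K_k,K_k)=k-9$ --- is routine.
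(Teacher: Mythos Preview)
Your proposal is correct and follows essentially the same approach as the paper: both recognize $\g_k$ via Kac's theory of realizations of the generalized Cartan matrix $C$, identify $K_k$ as spanning the extra central direction when $k \ne 9$ (using $\alpha_i(K_k)=0$ and $(K_k,K_k)=k-9\ne 0$, equivalently $\h_k = K_k^\perp \oplus \RR K_k$ with $K_k^\perp = \bigoplus_i \RR\alpha_i^\vee$), and observe that for $k=9$ the realization $(\h_9,\Pi,\Pi^\vee)$ is already minimal so $\g_9=\mathfrak{e}_{9(9)}$. The only difference is cosmetic: the paper first exhibits the minimal realization $K_k^\perp$ and then adjoins $K_k$, whereas you start from $\h_k$ and compare its dimension $k+1$ with the minimal dimension $2k-\rank C$.
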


\begin{proof}
(i) For $3 \le k \le 8$, note that 
$$
\big(K_k^\perp, \{\alpha_i\}_{i=1}^k, \{\alpha_i^\vee \}_{i=1}^k\big)
$$ 
is a \emph{realization} \cite[\S 1.1]{Kac} over $\RR$ of the generalized Cartan matrix $C$ of type $E_{k}$. Therefore, the real Lie algebra generated by $e_1, \dots, e_{k}, \linebreak[0] f_1, \dots, f_{k}$ and the elements of $K_k^\perp$ with the same defining relations as for $\g_k$ is the split real Kac-Moody algebra corresponding to the Cartan matrix. In this case, this Kac-Moody algebra is the finite-dimensional real semisimple Lie algebra $\mathfr{e}_{k(k)}$ of type $E_{k(k)}$. Adding the generator $K_k$ to the list, noting that $\h_k = \RR K_k \oplus K_k^\perp = \RR K_k \oplus \RR \alpha_1^\vee \oplus \dots \oplus \RR \alpha_k^\vee$, and using the relations for $\g_k$, we obtain the direct sum $\RR K_k \times \mathfr{e}_{k(k)}$, i.e., the trivial extension of $\mathfr{e}_{k(k)}$. Note that in this case the symmetric matrix $C$ and the induced inner product on $K_k^\perp$ are positive definite.

\smallskip 
\noindent (ii) For $k=9$, when the generalized Cartan matrix $C$ has rank 8 and is positive semidefinite, the triple \newline $(\h_9, \{\alpha_i\}_{i=1}^9, \{\alpha_i^\vee \}_{i=1}^9)$ is a realization over $\RR$ of the generalized Cartan matrix $C$ of type $E_{9}$. Therefore, $\g_9$ is the split real affine Kac-Moody algebra of this type. Note that in this case $K_9 \in K_9^\perp = \RR \alpha_1^\vee \oplus \dots \oplus \RR \alpha_9^\vee \subsetneq \h_9$, and the symmetric matrix $C$ and the induced inner product on $K_9^\perp$ are positive semidefinite.

\smallskip 
\noindent (iii) For $k \ge 10$, as in the proof of Part (i), the triple $\big(K_k^\perp, \{\alpha_i\}_{i=1}^k, \{\alpha_i^\vee \}_{i=1}^k\big)$ gives a realization over $\RR$ of the generalized Cartan matrix $C$ of type $E_{k}$. The corresponding real Kac-Moody algebra generated by the Chevalley generators $e_1, \dots, e_{k}, \linebreak[0] f_1, \dots, f_{k}$ and the elements of $K_k^\perp$ is a split real Kac-Moody algebra of type $E_{k(k)}$. It is of indefinite type, because so are the symmetric matrix $C$  and the subspace $K_k^\perp$ in this case. Adding the generator $K_k$ gives the trivial extension of this Kac-Moody algebra.
\end{proof}

\begin{defn}[Lie algebra action on a DGCA]
    An \emph{action of a Lie algebra $\g$ on an augmented DGCA} $(A,d, \eps)$ is understood
as a dg-Lie-algebra homomorphism
\vspace{-3mm} 
  \[
  \g \longrightarrow \Der A,
  \]

\vspace{-2mm} 
\noindent where $\Der A$ is the \emph{Lie algebra of derivations of
  $(A,d, \eps)$}. These are linear maps $A \to A$ of degree 0, commuting with
the differential $d: A \to A$ and augmentation $\eps: A \to \RR$, and satisfying the Leibniz rule.
\end{defn}

\begin{construction}[Parabolic action on toroidification model]
\label{pk-acts}
Consider the \emph{maximal $($proper$)$ parabolic subalgebra} $\p_k \subset \g_k$ generated by all the above generators
but $f_k$ and define an action of $\p_k$ on $M(\mc{T}^k S^4)$
(see Theorem \ref{tor}) as follows:
\vspace{-2mm} 
\begin{equation*}
 e_k g_4 = e_k g_7 = e_k s_i g_4 = e_k s_i g_7 
 = e_k s_i s_j g_7 = e_k s_1 s_2 s_3 g_4 
 = 0, \qquad \qquad \qquad  \quad \; \proofstep{$i, j \in \{1,2,3\}$},
       \end{equation*}
     \vspace{-10mm} 
 \begin{align*}        
 e_k w_i & = 0, & & & \proofstep{$1 \le i \le k$},\\
 e_k s_i s_j g_4 & = \sgn
  \bigl( \begin{smallmatrix}1 & 2 & 3\\ i& j & l\end{smallmatrix}
    \bigr)\, w_l & & & 
       \proofstep{$ \{i, j, l \} = \{1, 2, 3 \}$},\\
          e_k s_1 s_2 s_3 g_7 & = g_4,
           \\
   [e_k, s_i] & = 0,  
& & & \proofstep{$4 \le i \le k$},\\
h g_4 &= - \eps_0(h)
    g_4, & h g_7 & = - 2\eps_0 (h) g_7,\\ 
    [h, s_i] &= \eps_i (h) s_i,
     & h w_i & = - \eps_i(h) w_i, 
     & \proofstep{$1 \le i \le k$},
    \\
    e_i g_4 & = e_i g_7 = 0, 
    &&&\proofstep{$1 \le i \le k-1$},
    \\
    e_i w_j &= \delta_{ij} w_{i+1}, &&&
\proofstep{$1 \le i \le k-1$, $1 \le j \le k$},
\\
               [e_i,s_j] & = - \delta_{i+1,j} s_{i}, &&&
               \proofstep{$1 \le i \le k-1$, $1 \le j \le k$},
               \\
   f_i g_4 &= f_i g_7 = 0, &&&  \proofstep{$1 \le i \le k-1$},
   \\
    f_i w_j &= \delta_{i+1,j} w_{i}, &&&  \proofstep{$1 \le i \le k-1$, $1 \le j \le k$},
    \\
    [f_i,s_j] &= - \delta_{ij} s_{i+1},
    &&&  \proofstep{$1 \le i \le k-1$, $1 \le j \le k$}.
\end{align*}
Here we extend the differentials $s_i$ from the loop space model $M(\mc{L}^k S^4) =
S(U_k^{>0})$ to the toroidification model $M(\mc{T}^k S^4) = S(U_k^{>0})[w_1,\dots, \linebreak[0] w_k] $ by
setting $s_i w_j = 0$ for all $i,j$.
\end{construction}

\begin{remark}[Linearity of action]
Note that when the DGCA $A$ is \emph{semifree}, i.e., free as a graded commutative algebra: $A = S(V)$, 
then a derivation which is induced by a linear map $V \to V \subset S(V)$ is called \emph{linear}. 
If $\g$ acts on $A = S(V)$ by linear derivations, we call such an action \emph{linear}. 
To keep things simple, all actions considered in this paper are linear.
\end{remark}

\begin{theorem}[The parabolic action]
  \label{p-action}
For $k \ge 3$, the above formulas in \Cref{pk-acts} define a
$($linear$)$ action of the parabolic Lie subalgebra
$\p_k$ of $\g_k$ on the Sullivan minimal model $M(\mc{T}^k S^4)$,
\emph{i.e.,} a Lie-algebra homomorphism
\vspace{-2mm} 
  \[
  \p_k \longrightarrow \Der M(\mc{T}^k S^4)\,.
  \]
\end{theorem}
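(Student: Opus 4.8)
The plan is to reduce the statement to a finite list of explicit operator identities on $M := M(\mc{T}^k S^4)$ and then verify them. By \Cref{tor}, $M = S(U_k^{>0})\otimes\RR[w_1,\dots,w_k]$ is semifree, so a linear derivation of $M$ is determined by its restriction to the generating graded vector space $U_k^{>0}\oplus\RR w_1\oplus\dots\oplus\RR w_k$, and two linear derivations agree once they agree there. Hence it suffices to show: \textbf{(a)} each linear map listed in \Cref{pk-acts} extends to a genuine element of $\Der M$, i.e.\ a degree-zero derivation commuting with $d_t$ and with the augmentation; and \textbf{(b)} the resulting derivations satisfy the defining relations of $\p_k$. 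For \textbf{(b)} I will use that $\p_k$ is presented by the Chevalley generators $e_1,\dots,e_k$, $f_1,\dots,f_{k-1}$ and the elements of $\h_k$, subject to exactly those relations of $\g_k$ that do not involve $f_k$ --- the standard presentation of a maximal parabolic of a (Kac--Moody) Lie algebra, cf.\ \cite{Kac}. This reduces everything to checking those relations on the generators of $M$, a computation uniform in $k\ge 3$ (in particular valid for the infinite-dimensional range $k\ge 9$, where the Gabber--Kac presentation still applies; for $k=9$ one only notes that $\h_9$ carries one extra derivation generator).

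For \textbf{(a)}: degree-zero and augmentation compatibility are immediate from the formulas, reading off degree preservation case by case, e.g.\ $\abs{s_is_jg_4}=2=\abs{w_l}$ and $\abs{s_1s_2s_3g_7}=4=\abs{g_4}$. For commutation with the differential, observe that for any degree-zero derivation $D$ the graded commutator $[D,d_t]=Dd_t-d_tD$ is again a derivation, now of degree $+1$, so it vanishes as soon as it vanishes on $g_4$, $g_7$, the monomials $s_{i_1}\cdots s_{i_p}g_4$ and $s_{j_1}\cdots s_{j_q}g_7$, and the $w_i$. For $D\in\h_k$ this is precisely compatibility of $d_t$ with the torus weight decomposition of $M$, already established in the proof of \Cref{maxtorus}. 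For $D=e_i$ or $f_i$ with $1\le i\le k-1$ one uses $d_tv=dv+\sum_m w_m\cdot s_mv$ together with $[d_t,s_m]=0$, $d_tw_m=0$, $s_mw_j=0$ and the intertwining relations $[e_i,s_j]=-\delta_{i+1,j}s_i$, $[f_i,s_j]=-\delta_{ij}s_{i+1}$, $e_iw_j=\delta_{ij}w_{i+1}$, $f_iw_j=\delta_{i+1,j}w_i$; these say exactly that $e_i$ and $f_i$ commute both with the free-loop differential $d_f$ (on which they act through $e_ig_4=e_if_7=0$ and $d_fg_7=-\tfrac12 g_4^2$) and with the ``mixing'' derivation $v\mapsto\sum_m w_m\cdot s_mv$, whence $[D,d_t]=0$.

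For \textbf{(b)}: the relations $[h,h']=0$ hold since all of $\h_k$ acts diagonally on generators; $[h,e_i]=\alpha_i(h)e_i$, $[h,f_i]=-\alpha_i(h)f_i$ and the analogue for $e_k$ are weight bookkeeping using $\alpha_i=\eps_i-\eps_{i+1}$, $\alpha_k=\eps_0-\eps_1-\eps_2-\eps_3$ and the recorded weights of $g_4,g_7,w_i$ and of the operators $s_i$; $[e_i,f_j]=\delta_{ij}\alpha_i^\vee$ for $1\le i,j\le k-1$ and $[e_k,f_j]=0$ are checked on the $w$'s (for $i=j$ the left side gives $\delta_{i+1,m}w_{i+1}-\delta_{im}w_i=-\eps_m(\alpha_i^\vee)w_m=\alpha_i^\vee w_m$) and propagated to the $s$-monomials through the $[\,\cdot\,,s_j]$ relations. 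Finally, the Serre relations: on $\RR w_1\oplus\dots\oplus\RR w_k$ the operators $e_1,\dots,e_{k-1}$ and $f_1,\dots,f_{k-1}$ act as the Chevalley generators of $\mathfrak{sl}(k,\RR)$ in its defining representation (and compatibly on the $s$-monomials, which carry the dual fundamental), where the $A_{k-1}$ Serre relations are classical; and since node $k$ is joined only to node $3$ in \eqref{Dynkin-schem}, the relations involving $e_k$ reduce to $[e_i,e_k]=0$ for $i\in\{1,2\}$ (within $i\le k-1$), $\operatorname{ad}(e_3)^2(e_k)=0$ and $\operatorname{ad}(e_k)^2(e_3)=0$, each a finite check on the handful of generators on which $e_k$ is nonzero.

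The main obstacle is the analysis of $e_k$. Unlike $e_1,\dots,e_{k-1}$, it does not commute with all the $s_i$: it sends $s_is_jg_4\mapsto\pm w_l$ and $s_1s_2s_3g_7\mapsto g_4$ and commutes with $s_i$ only for $i\ge 4$, so proving $[e_k,d_t]=0$ comes down to several sign cancellations governed by the antisymmetry of $\sgn\bigl(\begin{smallmatrix}1&2&3\\ i&j&l\end{smallmatrix}\bigr)$. Concretely one must evaluate $e_kd_t$ and $d_te_k$ on $g_4$, $g_7$, on $s_ig_4$, $s_is_jg_4$, $s_1s_2s_3g_4$ and on $s_1s_2s_3g_7$ --- using the explicit $d_t$ from the $k=3$ example, which is uniform in $k$ --- and then invoke $[e_k,s_i]=0$ for $i\ge 4$ to reach the remaining generators; the same bookkeeping recurs in $\operatorname{ad}(e_3)^2(e_k)=0$, $\operatorname{ad}(e_k)^2(e_3)=0$ and in $[e_k,f_j]=0$. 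Beyond this I anticipate no conceptual difficulty: for $k=9$ the only change is the extra generator of $\h_9$, but since the $\h_k$-action in \Cref{pk-acts} is written uniformly via $\eps_0,\eps_i\in\h_k^*$, every weight argument goes through unchanged.
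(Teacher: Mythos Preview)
Your plan is correct and follows essentially the same architecture as the paper's proof: extend the formulas to derivations using semifreeness, check degree-zero and $[D,d_t]=0$ on generators, then verify the Chevalley relations of $\p_k$. One small omission in your list of Serre relations: for $k\ge 5$ you also need $[e_i,e_k]=0$ for $4\le i\le k-1$, though this is immediate from $[e_k,s_i]=0$ for $i\ge 4$.

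The main methodological difference is in how the harder checks are dispatched. You propose direct computation for $[e_k,d_t]=0$ and for $\operatorname{ad}(e_k)^2(e_3)=0$, etc., tracking sign cancellations by hand. The paper instead exploits the $\h_k$-weight grading systematically: since the action is linear and weights add, for each identity one computes the weight $\alpha$ of the relevant operator and the degree, then observes that the weight-degree piece $S^2(V)_\beta^n$ (or $V_\beta^n$) is often zero or one-dimensional, which either kills the obstruction outright or reduces it to matching a single scalar. For instance, $\operatorname{ad}(e_k)^2(e_3)$ has weight $2\eps_0-2\eps_1-2\eps_2-\eps_3-\eps_4$, and one checks that no two generators of $M$ differ by this weight, so the linear operator vanishes automatically. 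Your representation-theoretic shortcut for the $A_{k-1}$ Serre relations (recognizing the defining $\mathfrak{sl}(k,\R)$-module on the $w_i$ and its dual on the $s_i$) is a nice complement the paper does not spell out; conversely, the paper's weight-counting trick would save you most of the sign bookkeeping you flag as the main obstacle.
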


\begin{proof}
The formulas before the theorem, characterizing how the generators of
$\p_k$ act on the generators of $M(\mc{T}^k S^4)$, define graded
derivations of $M(\mc{T}^k S^4)$ as a graded commutative algebra,
because $M(\mc{T}^k S^4)$ is free as such. The action formulas are designed to make sure that these derivations are of degree zero. The action also extends the action of the abelian Lie algebra $\h_k$, the Lie algebra of the maximal split torus from \Cref{maxtorus}. All this pretty much determines the action of the parabolic $\p_k$: a Chevalley generator of $\p_k$ corresponding to a root $\alpha$ acting on a generator of $M(\mc{T}^k S^4)$ of weight $\beta$ maps it to a generator of weight $\alpha + \beta$. Our goal here is to show that the formulas above realize such an action.

The fact that the action respects the differential is
verified in a straightforward, though tedious way, as follows. Since the differential commutes with the action of $\h_k$, it implies that the weight spaces are preserved by the differential. The differential is quadratic and the action is linear, hence, for a Chevalley generator $g$ of weight $\alpha$ and a generator $x \in V \subset S(V) = M = M(\mc{T}^k S^4)$, the elements $dgx$ and $gdx$ will be in the weight space $S^2(V)_{\alpha+\beta}$ and of a particular degree. In some cases, when this subspace is zero, this will automatically imply that $dgx = 0 =  gdx$. For instance, $d e_k s_1 s_2 g_4$ and $e_k d s_1 s_2 g_4$ are in $S^2(V)_{-\eps_3}^3 = 0$. Likewise, $de_k g_4$, $e_k d g_4 \in S^2(V)_{-\eps_1-\eps_2-\eps_3}^5 = 0$, $de_k g_7$, $e_k d g_7 \in S^2(V)_{-\eps_0 -\eps_1-\eps_2-\eps_3}^8 = 0$, $df_i w_j$, $f_i d w_j \in S^2(V)_{\eps_{i+1} -\eps_i-\eps_j}^3 = 0$ for $1 \le i \le k-1$ and $1 \le j \le k$. In other cases, the weight-degree space would be one-dimensional, such as $S^2(V)_{-\eps_2 -\eps_3}^4 = \R w_2 \cdot w_3 $, and this would guarantee that one of the elements $dgx $ and $gdx$ would be a scalar multiple of the other. In this case, we would need a direct verification, such as 
\vspace{-2mm} 
\begin{align*}d e_k s_1 g_4 &= 0 = - w_2 w_3 + w_3 w_2 = - w_2 \cdot e_k s_1 s_2 g_4 - w_3 \cdot e_k s_1 s_3 g_4  
\\
&= - \sum_{l=1}^k w_l \cdot e_k s_1 s_l g_4 = - e_k \sum_{l=1}^k w_l \cdot s_1 s_l g_4 = - e_k s_1 \sum_{l=1}^k w_l \cdot s_l g_4 = -e_k s_1 dg_4 = e_k d s_1 g_4\;.
\end{align*}

\vspace{-2mm} 
\noindent In some other cases, such as $e_k d s_1 s_2 s_3 g_7$, $ d e_k s_1 s_2 s_3 g_7 
\in S^2(V)_{-\eps_0}^5 = \bigoplus_{l=1}^k \R w_l \cdot s_l g_4$, the weight-degree space would be of higher dimension, and again, a direct verification of the equation $dgx = gdx$ would be needed. Here it is in this case:
\vspace{-3mm} 
\begin{align*}
 e_k d s_1 s_2 s_3 g_7 &= \tfrac{1}{2} e_k s_1 s_2 s_3 (g_4^2) -  e_k \sum_{l=1}^k s_1 s_2 s_3( w_l \cdot s_l g_7) 
 \\[-5pt]
 & = e_k (s_2 s_3 g_4 \cdot s_1 g_4 - s_1 s_3 g_4 \cdot s_2 g_4 + s_3 g_4 \cdot s_1 s_2 g_4) + \sum_{l=4}^k  w_l \cdot s_l e_k s_1 s_2 s_3 g_7
 \\[-5pt]
 & =  \sum_{l=1}^k w_l \cdot s_l g_4 = d g_4 = d e_k s_1 s_2 s_3 g_7.
 \end{align*}

 \vspace{-2mm} 
\noindent  Here is one more case like this:
 \vspace{-3mm} 
\begin{align*}
 f_j d s_j g_7 &= f_i s_j g_4 \cdot g_4 - f_i \sum_{l=1}^k s_j s_l g_7 \cdot w_l
 \\[-5pt]
 &= -\delta_{ij} s_{i+1} g_4 \cdot g_4  + \delta_{ij} \sum_{l=1}^k s_{i+1} s_l g_7 \cdot w_l - \sum_{l=1}^k s_j f_i s_l g_7 \cdot w_l - \sum_{l=1}^k s_j s_l g_7 \cdot f_i w_l
 \\[-1pt]
& = \delta_{ij} s_{i+1} d g_7 + s_j s_{i+1} g_7 \cdot w_i - s_j s_{i+1} g_7 \cdot w_i = - \delta_{ij} d s_{i+1} g_7 = d f_i s_j g_7.
 \end{align*}

Having established the compatibility of the action with the differential, we get a linear action of the \emph{free Lie
algebra} $L$ generated by $e_1, e_2, \dots, e_k, f_1, f_2, \linebreak[0]
\dots, \linebreak[1] f_{k-1}$ and the vector space $\h_k$ on the DGCA $M(\mc{T}^k S^4)$.

The rest of the proof is a 
straightforward verification that
the defining relations of the quotient $\p_k$ of $L$ by the ideal generated by the relations (which are those relations of $\g_k$ which only involve the generators of $\p_k$) are satisfied by the operators defining the action. Note that
the relations of commutation with the elements $h \in \h_k$ hold due
to the design of the formulas: the action is defined so that the total
weight of Chevalley generators and generators of the Sullivan algebra
on each side of the formulas is the same. So, the only relations that
need to be checked are the ones that involve the $e_i$'s and
$f_j$'s. These are $[e_i,f_j] = \delta_{ij} \alpha_i^\vee = h_i-h_{i+1}$ for $1 \le i \le k$ and $1 \le j \le k-1$ and the Serre relations. (Recall that $f_k \not\in \p_k$.) Let us first check if the commutator of the actions of $e_i$ and $f_i$ equals to the action of $[e_i,f_i] = \alpha_i^\vee = h_i-h_{i+1}$ for any $i = 1, \dots, k-1$. 
It suffices to check only the action on the generators on which at least one of $e_i$ and $f_i$ acts nontrivially:
\vspace{-2mm} 
\begin{gather*}
w_i  \xmapsto{\;e_i\;}  w_{i+1} \xmapsto{\;f_i\;} w_i, \qquad 
s_{i+1} \xmapsto{\;[e_i, -]\;}  -  s_i  \xmapsto{\;[f_i,-]\;} s_{i+1}, 
\\
w_{i+1} \xmapsto{\;f_i\;} w_i  \xmapsto{\;e_i\;} w_{i+1}, 
\qquad 
s_i \xmapsto{\;[f_i,-]\;}  - s_{i+1} \xmapsto{\;[e_i,-]\;} s_i,
\end{gather*}

\vspace{-2mm} 
\noindent whence
\vspace{-2mm} 
\begin{align*}
\hspace{-6mm} 
e_i(f_i w_i) - f_i (e_i w_i) & = - f_i (e_i w_i)  = - w_i  = - \eps_i (h_i-h_{i+1}) w_i = (h_i-h_{i+1}) w_i, \\
e_i(f_i w_{i+1}) - f_i (e_i w_{i+1}) & = e_i(f_i w_{i+1}) = w_{i+1}  = -\eps_{i+1}(h_i-h_{i+1}) w_{i+1} = (h_i-h_{i+1}) w_{i+1} ,\\
\big[e_i, [f_i,s_i]\big] - \big[f_i, [e_i,s_i]\big]  & = \big[e_i, [f_i,s_i]\big] = s_i  = \eps_i(h_i-h_{i+1}) s_i = [h_i-h_{i+1}, s_i], \\
\big[e_i, [f_i,s_{i+1}]\big] - \big[f_i, [e_i,s_{i+1}]\big] & = - \big[f_i, [e_i,s_{i+1}]\big]  = - s_{i+1}  = \eps_{i+1}(h_i-h_{i+1}) s_{i+1} = [h_i-h_{i+1},s_{i+1}].
\end{align*}
The fact that the commutator of the actions of $e_i$ and $f_j$ for $i \ne j$ equals the action of $[e_i, f_j]$ is verified similarly.

Now consider the Serre relations. For the following Serre relation, we will have to assume that $k > 3$, because for $k=3$ it would just not be imposed.
The relation we would like to check reads
$(\operatorname{ad}\rho(e_k))^{1-c_{k3}}(\rho(e_3)) = 0$ or, in this case, $(\operatorname{ad} \rho(e_k))^2(\rho(e_3)) = [\rho(e_k),[\rho(e_k),\rho(e_3)]] = 0$, where $\rho: L \to \Der^0 M(\mc{T}^k S^4)$ denotes the action of the free Lie algebra $L$ on $M(\mc{T}^k S^4)$. It suffices to see that the operator $[\rho(e_k),[\rho(e_k),\rho(e_3)]]$ acts on every generator of $M(\mc{T}^k S^4)$ by zero. Note that the weight of the operator is $2 \eps_0 - 2 \eps_1 - 2 \eps_2 - \eps_3 -\eps_4$. Now observe, by analyzing the weights of the generators given by \Cref{tor}, that no two generators of $M(\mc{T}^k S^4)$ differ by this weight. Since the action $\rho$ is linear, i.e., takes generators to linear combinations thereof, this implies that the operator $[\rho(e_k),[\rho(e_k),\rho(e_3)]]$ vanishes on every generator of $M(\mc{T}^k S^4)$.

 The other Serre relations corresponding to $c_{ij} = -1$ are checked similarly. The simpler Serre relations corresponding to $c_{ij} = 0$ are checked directly, just like the relation $[e_i,f_i] = \alpha_i^\vee$ above. For example, to check the relation $[e_1, e_3] = 0$, we look at those generators of $M(\mc{T}^k S^4)$ on which either $e_1$ or $e_3$ acts nontrivially:
 \vspace{-3mm} 
 \begin{align*}
 [e_1, [e_3, s_2]] - [e_3, [e_1, s_2] & = [e_1, 0 ] + [e_3, s_1] = 0,\\
 [e_1, [e_3, s_4]] - [e_3, [e_1, s_4] & = -[e_1, s_3] - [e_3, 0] = 0,\\
 e_1(e_3 w_1) - e_3 (e_1 w_1) & = e_1 \cdot 0 - e_3 w_2  = 0,\\
  e_1(e_3 w_1) - e_3 (e_1 w_1) & = e_1 \cdot 0 - e_3 w_2  = 0.
\end{align*}
To check the relation $[e_1, e_k] = 0$, we use
\vspace{-2mm}
 \begin{align*}
 e_1 (e_k s_2 s_3 g_4) - e_k (e_1 s_2 s_3 g_4) & = e_1 w_1 + e_k s_1 s_3 g_4 = w_2 - w_2 = 0,\\
 e_1 (e_k s_1 s_2 s_3 g_7) - e_k (e_1 s_1 s_2 s_3 g_7) & = e_1 g_4 + e_k s_1 s_1 s_3 g_7 =0 + 0 = 0. \qedhere
\end{align*}
\end{proof}

\smallskip 
\begin{remark}[Action of the gravity line]
\label{gravity-line}
    Since the ``gravity line'' Lie algebra $\mathfrak{sl}(k,\R)$ (see \Cref{sec-para}) may be identified with the subalgebra of $\p_k$ generated by $e_1, \dots, e_{k-1}$, $f_1, \dots, f_{k-1}$, and $\alpha_1^\vee, \dots, \alpha_{k-1}^\vee$, \Cref{p-action} gives, in particular, an action of the ``gravity line'' Lie algebra on $M(\mc{T}^k S^4)$. This works for all $k \ge 0$, as the cases $0 \le k \le 2$ are treated in the next section. Note that the restriction of the action $\g_k \to \Der M(\mc{T}^k S^4)$ to $\mathfrak{sl}(k,\R)$ is \emph{faithful}, i.e., the kernel of the action homomorphism $\mathfrak{sl}(k,\R) \to \Der M(\mc{T}^k S^4)$ is zero, because this action is nontrivial by construction and the Lie algebra $\mathfrak{sl}(k,\R)$ is simple.
\end{remark}

\section{The small \texorpdfstring{$k$}{} case}
\label{small-k}

Here we complement the action of the parabolic $\p_k$ on $M(\mc{T}^k S^4)$ defined in the previous section \cref{parabolic_action} for $k \ge 3$ with the small $k$ case: $0 \le k \le 2$, which does not quite follow the same pattern. In this case, the real Lie algebra $\g_k$ will be defined by starting with its Cartan subalgebra $\h_k$ with a basis $\{ h_0, \dots, h_k\}$, just as in \eqref{basis-h}, with the same Minkowski inner product and a basis $\{ \eps_0, \dots, \eps_k\}$ of $\h_k^*$, also as introduced right after \eqref{basis-h}.
The difference with the case $k \ge 3$ begins with the set of simple roots. Here, we will just have
\vspace{-2mm}
\[
\alpha_1 \coeq \eps_{1} - \eps_{2}  \qquad \text{for $k = 2$}
\]
as the only simple root (and, respectively, the only simple coroot $\alpha_1^\vee \coeq h_{1} - h_{2}$) and no roots or coroots whatsoever for $k = 0$ and $1$. Since the simple root $\alpha_k$ in the Dynkin diagram $E_k$ for $0 \le k \le 2$, see \eqref{Dynkin-schem-3} and the paragraph after that, is absent, it makes sense to define the parabolic subalgebra corresponding to $\alpha_k$ as $\p_k = \g_k$.

\medskip 
The full Lie algebra $\g_2$, by analogy with how it is defined for $k \ge 3$ just before \Cref{identification} will be generated by the real vector space $\h_2$ spanned by $h_0$, $h_1$, and $h_2$, and the elements $e_1$ and $f_1$, such that
\vspace{-1mm}
\begin{align*}
[h, h'] & = 0 && \text{for $h, h' \in \h_2$,}\\
[h,e_1] & = \alpha_1(h) e_1  && \text{for $h \in \h_2$,}\\
[h,f_1] & = - \alpha_1(h) f_1  & & \text{for $h \in \h_2$,}\\
[e_1, f_1] & = \alpha_1^\vee.
\end{align*}
Here we recognize the Lie subalgebra $\mathfr{sl}(2,\R)$ generated by $\alpha_1^\vee = h_1-h_2$, $e_1$, and $f_1$, which sits in $\g_2$ as a direct summand:
\vspace{-1mm}
\[
\g_2 \cong \mathfr{sl}(2,\R) \times (h_1-h_2)^\perp \cong \mathfr{gl}(2,\R) \times \R,
\]
where $(h_1-h_2)^\perp \subset \h_2$ is the abelian Lie algebra generated by $h_0$ and $h_1 + h_2$.

\medskip 
For $k=1$ and $0$, the situation is simpler:
\vspace{-3mm}
\begin{align*}
\g_1 & = \R h_0 \oplus \R h_1,\\
\g_0 & = \R h_0.
\end{align*}

Since the parabolic for $0 \le k \le 2$ is just the full Lie algebra $\g_k$, we will define its action on $M(\mc{T}^k S^4)$ essentially by the same formulas as in \Cref{pk-acts}, which simplify to the following:
\vspace{-2mm} 
\begin{align*}
h g_4 &= - \eps_0(h)
    g_4, & h g_7 & = - 2\eps_0 (h) g_7,\\ 
    [h, s_i] &= \eps_i (h) s_i,
     & h w_i & = - \eps_i(h) w_i, 
     & \proofstep{$1 \le i \le k \le 2$},
    \\
    e_1 g_4 & = e_1 g_7 = 0, 
    &&&\proofstep{$ k = 2$},
    \\
    e_1 w_1 &=  w_{2}, & e_1 w_2 & = 0, &\proofstep{$ k = 2$},
\\
               [e_1,s_1] & = 0, & 
               [e_1,s_2] & = - s_{1}, & \proofstep{$k = 2$},
               \\
   f_1 g_4 &= f_1 g_7 = 0, &&&  \proofstep{$k=2$},
   \\
    f_1 w_1 &= 0, & f_1 w_2 & = w_1 &  \proofstep{$k = 2$},
    \\
    [f_1,s_1] &= - s_{2}, & [f_1,s_2] &= 0
    &  \proofstep{$k= 2$.}
\end{align*}

The following theorem is verified the same way as \Cref{p-action}, but there are just fewer identities to check.
\begin{theorem}[The action for $k \le 2$]
  \label{g-action}
For $0 \le k \le 2$, the above formulas define a
$($linear$)$ action of the Lie algebra $\g_k$ on the Sullivan minimal model $M(\mc{T}^k S^4)$,
\emph{i.e.,} a Lie-algebra homomorphism
\vspace{-2mm} 
  \[
  \g_k \longrightarrow \Der M(\mc{T}^k S^4)\,.
  \]
\end{theorem}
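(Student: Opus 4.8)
The plan is to follow the proof of \Cref{p-action}, which simplifies considerably in the range $0 \le k \le 2$: the Dynkin diagram of type $E_k$ has at most one node, so there are no Serre relations to impose, and for $k \le 1$ the Lie algebra $\g_k$ is abelian. First I would dispose of the cases $k = 0, 1$. Here $\g_k = \h_k$ (spanned by $h_0$, together with $h_1$ when $k = 1$), and the displayed formulas
\[
h g_4 = -\eps_0(h) g_4, \qquad h g_7 = -2\eps_0(h) g_7, \qquad [h, s_i] = \eps_i(h) s_i, \qquad h w_i = -\eps_i(h) w_i
\]
are precisely the Lie algebra action obtained by differentiating the torus action of \Cref{maxtorus} at the identity. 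Since that torus acts by automorphisms of the DGCA $M(\mc{T}^k S^4)$, its Lie algebra acts by degree-zero derivations commuting with the differential, and these derivations commute with one another, being diagonal in the generator basis. This yields the homomorphism $\g_k \to \Der M(\mc{T}^k S^4)$ for $k \le 1$.

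For $k = 2$, the model $M(\mc{T}^2 S^4)$ is freely generated by $g_4$, $g_7$, $s_i g_4$, $s_i g_7$ ($i = 1, 2$), $s_1 s_2 g_4$, $s_1 s_2 g_7$, $w_1$, $w_2$, and $\g_2$ is generated by $\h_2$ together with $e_1$ and $f_1$. Each displayed formula extends uniquely to a graded derivation of this free algebra, and the exponents have been chosen so that every such derivation has degree $0$ and so that the total $\eps$-weight matches on both sides of each formula; consequently the relations $[h,h'] = 0$, $[h, e_1] = \alpha_1(h) e_1$, and $[h, f_1] = -\alpha_1(h) f_1$ hold automatically by this weight bookkeeping. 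What remains is to verify (a) that $e_1$ and $f_1$ commute with the differential $d_t$, and (b) that $[e_1, f_1] = \alpha_1^\vee = h_1 - h_2$ as derivations of $M(\mc{T}^2 S^4)$.

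For (a), exactly as in \Cref{p-action}, the differential preserves $\eps$-weight and is quadratic while the action is linear, so for any generator $x$ both $e_1 d_t x$ and $d_t e_1 x$ lie in a single weight--degree component of $S^2$ of the generating space; this component vanishes for all but finitely many generators (forcing the identity there), and in the remaining cases one checks $e_1 d_t x = d_t e_1 x$ directly, precisely as in the $k \ge 3$ computations already displayed in the proof of \Cref{p-action}, and likewise for $f_1$. For (b) it suffices to evaluate $[e_1, f_1]$ on the generators on which $e_1$ or $f_1$ acts nontrivially, namely $w_1$, $w_2$, $s_1$, $s_2$:
\begin{align*}
e_1(f_1 w_1) - f_1(e_1 w_1) &= - f_1 w_2 = -w_1 = (h_1 - h_2) w_1, \\
e_1(f_1 w_2) - f_1(e_1 w_2) &= e_1 w_1 = w_2 = (h_1 - h_2) w_2, \\
[e_1, [f_1, s_1]] - [f_1, [e_1, s_1]] &= [e_1, -s_2] = s_1 = [h_1 - h_2, s_1], \\
[e_1, [f_1, s_2]] - [f_1, [e_1, s_2]] &= -[f_1, -s_1] = -s_2 = [h_1 - h_2, s_2],
\end{align*}
where we used $\eps_1(h_1 - h_2) = 1$ and $\eps_2(h_1 - h_2) = -1$. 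Since the Cartan matrix of type $E_2 = A_1$ is the $1 \times 1$ matrix $(2)$, there are no Serre relations, so (a) and (b) exhaust the defining relations of $\g_2$, and the formulas define the claimed Lie algebra homomorphism.

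I do not expect a genuine obstacle in this section; it is a strict simplification of the proof of \Cref{p-action}. The only point requiring care is the $\eps$-weight bookkeeping, which is what makes the relations with $h \in \h_k$ hold for free and which organizes the short, finite case-check establishing that $e_1$ and $f_1$ commute with $d_t$.
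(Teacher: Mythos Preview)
Your proposal is correct and follows essentially the same approach as the paper, which simply states that the theorem ``is verified the same way as \Cref{p-action}, but there are just fewer identities to check.'' Your write-up is in fact more detailed than the paper's: you spell out the $[e_1,f_1] = \alpha_1^\vee$ verification explicitly and observe that the $A_1$ Cartan matrix imposes no Serre relations, and your shortcut for $k \le 1$ via differentiating the torus action of \Cref{maxtorus} is a clean way to handle the abelian case.
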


One can easily check by hand, using the properties $M(S^4)^4 = \R g_4$ and $M(S^4)^7 = \R g_7$, that $\Der M (S^4) = \g_0$ and $\Aut M(S^4) = \GG_m(\R)$. A similar simple computation shows that $\Aut_l M(\mc{T}^1 S^4) \linebreak[0] = \GG_m(\R)^2$ and $\Der_l M(\mc{T}^1 S^4) = \g_1$, where $\Aut_l$ and $\Der_l$ denote the group of \emph{linear} automorphisms and Lie algebra of \emph{linear} derivations of the DGCA $M(\mc{T}^1 S^4) = M(\mc{L}_c S^4)$, respectively. The notion of linearity depends on the presentation of the Sullivan minimal model as $S(V)$. A much more elaborate but still straightforward computation shows that the full Lie algebra $\Der M(\mc{T}^1 S^4)$ of derivations is five-dimensional. Similar methods show that $\Der_l M(\mc{T}^2 S^4) = \g_2$. We do not know if this is the case for higher $M(\mc{T}^k S^4)$.

\medskip 
\noindent  Hisham Sati, {\it Mathematics, Division of Science, and 
\\
\indent Center for Quantum and Topological Systems (CQTS),  NYUAD Research Institute, 
\\
\indent New York University Abu Dhabi, UAE, and 
\\
\indent The Courant Institute for Mathematical Sciences, NYU, NY, USA.}
\\
\indent {\tt hsati@nyu.edu}
\\
\\
\noindent  Alexander A. Voronov, {\it School of Mathematics, University of Minnesota, Minneapolis, MN 55455, USA, and
\\
\indent Kavli IPMU (WPI), UTIAS, University of Tokyo, Kashiwa, Chiba 277-8583, Japan.}
\\
\indent {\tt voronov@umn.edu}

\end{document}